\numberwithin{equation}{section}
\newtheorem{theorem}{Theorem}[section]
\newtheorem*{theorem*}{Theorem}
\newtheorem{corollary}[theorem]{Corollary}
\newtheorem{proposition}[theorem]{Proposition}
\newtheorem*{proposition*}{Proposition}
\newtheorem{lemma}[theorem]{Lemma}
\theoremstyle{definition}
\newtheorem{definition}[theorem]{Definition}
\newtheorem{example}[theorem]{Example}
\newtheorem{remark}[theorem]{Remark}
\DeclareMathOperator{\End}{End}
\DeclareMathOperator{\holonomy}{Hol}
\begin{document}

\title{Enlargeable Foliations and the Monodromy Groupoid}

\author{Guangxiang Su and Zelin Yi}

\date{}

\maketitle


\abstract{
	Let $M$ be a spin manifold, the Dirac operator with coefficient  in the universal flat Hilbert $C^\ast \pi_1$-module determines a Rosenberg index element which, according to B. Hanke and T. Schick, subsumes the enlargeablility obstruction of positive scalar curvature on $M$. In this note, we generalize this result to the case of spin foliation. More precisely, given a foliation $(M,F)$ with $F$ spin, we shall define a foliation version of Rosenberg index element and prove that it is nonzero at the presence of compactly enlargeability of $(M,F)$.
}

\maketitle

\section{Introduction}
If $M$ is an even dimensional spin manifold with the fundamental group $\pi_1$ and the Dirac operator 
\[
D: C^\infty_c(M,S^+) \to C^\infty_c(M,S^-),
\]
according to \cite{MiscenkoFomenko79}, the Dirac-type operator twisted by the canonical flat $C^\ast \pi_1$-bundle 
\begin{equation}\label{eq-caonical-flat-bundle}
\widetilde{M} \times_{\pi_1} C^\ast \pi_1
\end{equation}
determines an element $[\alpha(M)]$ (we shall simply write $[\alpha]$ when there is no confusion) in $K_0(C^\ast \pi_1)$ which is usually called the Rosenberg index element. In fact, if $M$ is of odd dimensional, by replacing $M$ with $M\times S^1$, $[\alpha]\in K_1(C^\ast \pi_1)$ can also be defined. The main result of \cite{HankeSchick06} is the following.

\begin{theorem}[{\cite[Prop~4.2]{HankeSchick06}}]
	If $M$ is a compactly enlargeable spin manifold, then $[\alpha] \neq 0$ in $K_n(C^\ast \pi_1)$ where $n$ is the dimension of $M$.
\end{theorem}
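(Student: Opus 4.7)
The plan is to argue by contradiction: assume $[\alpha] = 0$ in $K_n(C^\ast \pi_1)$ and manufacture, from compact enlargeability, a $\ast$-homomorphism $\varphi : C^\ast \pi_1 \to \mathcal{B}$ under which $\varphi_\ast [\alpha]$ is visibly nonzero. The auxiliary $C^\ast$-algebra $\mathcal{B}$ will be a sequential quotient of the form $\prod_i \mathcal{A}_i / \bigoplus_i \mathcal{A}_i$, engineered so that a sequence of almost-flat bundles on $M$, whose curvatures shrink to zero in norm, descends in this quotient to a \emph{genuinely} flat Hilbert $\mathcal{B}$-module bundle.

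First I would extract from compact enlargeability a sequence $\epsilon_i \to 0$, Riemannian spin covers $p_i : \widetilde{M}_i \to M$, and $\epsilon_i$-contracting maps $f_i : \widetilde{M}_i \to S^n$ of nonzero degree that are locally constant outside compact sets $K_i \subset \widetilde{M}_i$. Pulling back a finite-rank bundle $E \to S^n$ whose Chern character generates $H^n(S^n;\mathbb{Q})$, equipped with a natural connection, yields bundles $f_i^\ast E \to \widetilde{M}_i$ with curvature bounded by $C \epsilon_i^2$ and canonically trivialized outside $K_i$. Using $\pi_1$-equivariance, I would then induce $f_i^\ast E$ up to a Hilbert $C^\ast \pi_1$-module bundle $\mathcal{E}_i$ on $M$; the curvature bound on $f_i^\ast E$ propagates upward, and packaging the $\mathcal{E}_i$ into the quotient
\[
\mathcal{B} := \prod_i M_{r_i}(C^\ast \pi_1) \Big/ \bigoplus_i M_{r_i}(C^\ast \pi_1)
\]
kills the residual curvature. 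The resulting flat Hilbert $\mathcal{B}$-bundle on $M$ is classified, up to stabilization, by a holonomy $\ast$-homomorphism $\varphi : C^\ast \pi_1 \to M_N(\mathcal{B})$.

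To conclude, I would identify $\varphi_\ast[\alpha] \in K_n(\mathcal{B})$ with the Mishchenko--Fomenko index of the Dirac operator on $M$ twisted by this flat $\mathcal{B}$-bundle. Componentwise this equals, upstairs on $\widetilde{M}_i$, the index of the Dirac operator twisted by $f_i^\ast E$; constancy of $f_i$ at infinity makes this integer well defined, and the Atiyah--Singer theorem identifies it with $\deg(f_i) \cdot \operatorname{ch}(E)[S^n] \neq 0$. Non-vanishing of the resulting sequence in $K_n(\mathcal{B})$ then contradicts the hypothesis $[\alpha] = 0$. The main obstacle, and the step I expect to be hardest, is the descent from the covers $\widetilde{M}_i$ to $M$: one must verify that the $\pi_1$-equivariant induction preserves the almost-flat curvature bound even when the deck groups $\Gamma_i := \pi_1 M / (p_i)_\ast \pi_1 \widetilde{M}_i$ are infinite, that the resulting sequence represents a well-defined class in $K_\ast(\mathcal{B})$, and that this class really coincides with $\varphi_\ast[\alpha]$ rather than some parasitic element; this is the technical heart of the Hanke--Schick sequence-algebra formalism.
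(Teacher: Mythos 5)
Your overall architecture (a sequence algebra $\prod/\bigoplus$, almost-flat bundles becoming genuinely flat in the quotient, a holonomy homomorphism out of $C^\ast\pi_1$, componentwise index detection) is exactly the Hanke--Schick strategy that the paper summarizes, but two of your concrete choices leave genuine gaps. First, you never compare against the trivial bundle. Atiyah--Singer gives
\[
\operatorname{ind}\bigl(D_{\widetilde M_i}\otimes f_i^\ast E\bigr)
=\operatorname{rank}(E)\,\widehat{A}(\widetilde M_i)[\widetilde M_i]
+\deg(f_i)\,\operatorname{ch}_{\mathrm{top}}(E)[S^n],
\]
(the intermediate terms vanish only because $S^n$ has no cohomology in degrees $0<k<n$), not $\deg(f_i)\operatorname{ch}(E)[S^n]$ as you assert. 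This is precisely why the cited proof, and its foliated version in this paper (the map $\phi_\ast=(\phi_1)_\ast-(\phi_2)_\ast$ and the virtual bundle $E_i-\mathbb{C}^{d_i}$), work with the \emph{difference} between the almost-flat bundle and the trivial bundle of the same rank. Without that subtraction (or a separate reduction to the case $\widehat{A}(M)[M]=0$, which you do not make) you cannot conclude that infinitely many components are nonzero --- and you need infinitely many, because finitely many nonzero entries die in the quotient by the direct-sum ideal.

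Second, your coefficient algebra $\mathcal{B}=\prod_i M_{r_i}(C^\ast\pi_1)/\bigoplus_i M_{r_i}(C^\ast\pi_1)$ undermines the detection step. Componentwise your twisted index is a class in $K_n(C^\ast\pi_1)$, not an integer; the appeal to constancy of $f_i$ at infinity to extract an integer is a relative-index argument that belongs to the noncompact, infinite-cover version of the theorem, whereas in the compactly enlargeable case the covers $\widetilde M_i$ are compact and finite-sheeted, so there are no ends and no infinite deck groups to worry about. Moreover $K$-theory does not commute with infinite products, so ``a nonzero sequence of indices'' does not by itself give a nonzero class in $K_n(\mathcal{B})$; you would still have to map onward to scalar coefficients and invoke the explicit computation of $K_\ast(\prod\mathcal{K}/\bigoplus\mathcal{K})$ as a quotient of $\prod\mathbb{Z}$. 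The proof the paper summarizes avoids both problems at once: since the covers are finite, one pushes $f_i^\ast E_0$ down along the covering to a finite-dimensional almost flat bundle $E_i$ on $M$ itself, with all Chern classes zero except $c_{\mathrm{top}}(E_i)\neq 0$, assembles these into a Hilbert $A$-module bundle with $A=\prod\mathcal{K}$, passes to the flat $Q=A/A'$-bundle, and detects $[\alpha]$ through $K_0(C^\ast\pi_1)\to K_0(Q)$, where the components are literally the Atiyah--Singer indices of $D$ twisted by $E_i-\mathbb{C}^{d_i}$. The identification of $\varphi_\ast[\alpha]$ with the twisted index, which you defer as the ``technical heart,'' is indeed a real step (it is the analogue of Proposition~\ref{prop-pushforward-of-dirac} in the foliated setting), but the two issues above are what actually break your argument as written.
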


Let us summarize the main idea of \cite{HankeSchick06} as follows: If $M$ is compactly enlargeable, there exists a sequence of almost flat vector bundles $E_i$ of dimension $d_i$ over $M$ (almost flat means the norms of curvatures of $E_i$'s converge to zero as $i\to \infty$).   Moreover, the sequence $E_i$ can be chosen so that all Chern classes vanish except the top degree part: $c_{\operatorname{top}}(E_i) \neq 0$. Let $P_i$ be the principal frame bundle of $E_i$, $\mathcal{K}$ be the $C^\ast$-algebra of compact operators. Unitary matrices act on $\mathcal{K}$ by the inclusion $U(d_i)\hookrightarrow \mathcal{K}$. Denote by $q_i$ the image of $1\in U(d_i)$ inside $\mathcal{K}$. The associated product 
\begin{equation}\label{eq-flat-bundle-K}
	P_i\times_{U(d_i)} \mathcal{K}
\end{equation}
is a Hilbert $\mathcal{K}$-module bundle. 

\begin{definition}\label{def-algebra-A-Q}
	Let $A$ be the $C^\ast$-algebra of bounded sequence of compact operators. Namely 
	\[
	A=\left\{(a_i)\in \prod_{\mathbb{N}} \mathcal{K} : \sup_{i\in \mathbb{N}}||a_i|| < \infty\right\}.
	\]
	Let $A_i\subset A$ be the subalgebra of sequences such that all but the $i$-th component vanish. It is clear that $A_i \cong \mathcal{K}$ for all $i \in\mathbb{N}$.	Let $A^\prime \subset A$ be the subalgebra consisting of sequences that converge to zero. In other word, $A^\prime$ is the closure of 
	$
	{\bigoplus \mathcal{K}}\subset A.
	$
	Let $Q$ be the quotient $C^\ast$-algebra $A/A^\prime$.
\end{definition}

Thanks to the boundedness of the curvatures of $E_i$'s, the sequence of Hilbert module bundles $P_i\times_{U(d_i)} \mathcal{K}$ can be assembled into a Hilbert $A$-module bundle $V$. The almost flatness of $E_i$ is reflected in the fact that the curvature of $V$ is endomorphism of $A$ which take value in $\hom(A, A^\prime)$. Therefore, $V$ can be reduced into a genuinely flat Hilbert $Q$-module bundle $W=V/V\cdot A^\prime$. Thanks to the flatness of $W$, there is a holonomy representation of the fundamental group $\pi_1$ and correspondingly, a $C^\ast$-algebras homomorphism $C^\ast \pi_1 \to Q$. To detect the non-vanishing of $[\alpha]$, it is enough to show the non-vanishing of its image under the map
\begin{equation}\label{eq-k-map-homomorphism}
K_0(C^\ast \pi_1) \to K_0(Q).
\end{equation}
It is known (\cite[Prop~3.6]{HankeSchick06}) that the $K$-theory of $Q$ is explicitly computable as a quotient of $\prod \mathbb{Z}$, and the $i$-th argument of $[\alpha]$ in $K_0(Q)$ is computed as the index of the Dirac-type operator twisted by $E_i$. The non-vanishing of $[\alpha]$ then follows from the non-vanishing of the top degree Chern class and the Atiyah-Singer index theorem.

In this paper, we shall generalize the above result to the case of compactly enlargeable foliations by following the same path. If $(M,F)$ is a compactly enlargeable foliation (see Definition~\ref{def-enlargeable-foliation}) with $F$ spin. Let $G_M$ and $G_H$ be the monodromy groupoid and the holonomy groupoid of $(M,F)$ respectively. The leafwise Dirac operator
\begin{equation*}
	D: C^\infty(M,S^+(F)) \to C^\infty(M,S^-(F))
\end{equation*}
defines a $K$-theory element  $[\alpha(M,F)]$ (we shall simply write $[\alpha]$ when there is no confusion) in $K_0(C^\ast G_M)$. We shall prove in this paper that the compactly enlargeability of $(M, F)$ implies that $[\alpha]\neq 0$ in $K_0(C^\ast G_M)$.

\begin{definition}\label{def-algebras-qaq}
	Recall that $q_i\in \mathcal{K}$ is the image of $1\in U(d_i)$ inside $\mathcal{K}$. Let $q=(q_1,q_2,\cdots)\in A$, then $qAq$ is an unital $C^\ast$-algebra. We shall write $qA^\prime q= A^\prime\cap qAq$ which is an ideal in $qAq$. Finally $qQq=qAq/qA^\prime q$.
\end{definition}

The enlargeability condition gives a sequence of leafwise almost flat vector bundles $\{E_i\}$ of dimension $d_i$ whose Chern classes vanish except the top degree part. Then the sequence of principal frame bundles and their associated product with the truncated compact operators $q_i\mathcal{K}q_i$ can be defined in the same way as in \eqref{eq-flat-bundle-K}.  Again, the sequence $\{E_i\}$ can be assembled into a Hilbert $qAq$-module bundle $V$, and the almost leafwise flatness will be reflected in a genuinely leafwise flat Hilbert $qQq$-module bundle $W=V/V\cdot qA^\prime q$. However, to the best of the authors' knowledge, there is no characterization of leafwise flat vector bundle in the form of \eqref{eq-caonical-flat-bundle}. To find the counterpart of \eqref{eq-k-map-homomorphism}, we shall make use of basic $KK$-theory.

The foliation counterpart of universal cover and fundamental group is the monodromy groupoid $G_M$. The role of $C^\ast$-algebras $A,A^\prime, Q$ will be played by three crossed product $C^\ast$-algebras $C^\ast(G_M, qAq), C^\ast(G_M, qA^\prime q), C^\ast(G_M, qQq)$ which is constructed by taking the completion of the algebra of compactly supported smooth maps on $G_M$ with values in $C^\ast$-algebras $qAq,qA^\prime q, qQq$ respectively. The fact that $qAq$ is an unital $C^\ast$-algebras is crucial in the corresponding pseudodifferential calculus that we shall need.
If $W$ is a leafwise flat Hilbert $qQq$-module bundle over $M$, the space of smooth compactly supported sections of the pull back bundle $r^\ast W \to G_M$ can be completed into a Hilbert $C^\ast (G_M, qQq)$-module $\mathcal{E}_W$. It can be shown, due to the leafwise flatness, this module also has a left $C^\ast G_M$-action which, together with the zero operator, determines a $KK$-theory element in 
\begin{equation}\label{eq-KK-map-flat}
	KK(C^\ast G_M, C^\ast (G_M, qQq)),
\end{equation}
which will play the role of the map  \eqref{eq-k-map-homomorphism}. The sequence of $C^\ast$-algebras
\begin{equation*}
	0 \to C^\ast( G_M, qA^\prime q) \to C^\ast (G_M, qAq) \to C^\ast (G_M, qQq )\to 0
\end{equation*}
is exact (see Proposition~\ref{prop-exact-sequence-of-new-C-algebra}) and induces the following exact sequence at the level of $K$-theory
\begin{equation}\label{eq-exact-sequence-K-theory-tensor}
	K_0(C^\ast ( G_M, qA^\prime q)) \to K_0(C^\ast ( G_M, qA q)) \to K_0(C^\ast ( G_M, qQ q)).
\end{equation}
The image of $[\alpha]$ under the map
\begin{equation*}
	K_0(C^\ast G_M) \to K_0(C^\ast ( G_M, qQ q)),
\end{equation*}
which is given by Kasparov product with the $KK$-element \eqref{eq-KK-map-flat}, is given by the twisted leafwise Dirac operator $[D_W]\in K_0(C^\ast ( G_M, qQ q))$. The twisted leafwise Dirac operator $D_V$ defines an element in $K_0(C^\ast ( G_M, qA q))$ which is mapped to $[D_W]$ under the second map of \eqref{eq-exact-sequence-K-theory-tensor}. It suffices to show that $[D_V]$ is not in the image of the first map of \eqref{eq-exact-sequence-K-theory-tensor}.  Indeed, let $p_i$ be the projection $A \to \mathcal{K}$ into the $i$-th component, it induces a map $C^\ast ( G_M, qA q) \to C^\ast (G_M, M_{d_i}(\mathbb{C}))$. Then the image of $[D_V]$ under the composition 
$$
K_0(C^\ast ( G_M, qA q))\to \prod K_0(C^\ast (G_M, M_{d_i}(\mathbb{C}))) \cong \prod K_0(C^\ast G_M)\to \prod K_0(C^\ast_r G_H)
$$
is given by the longitudinal indices of the leafwise Dirac type operators twisted by the vector bundles $E_i$. While the image of $K_0(C^\ast(G_M, qA^\prime q))$ can be shown to be contained in the direct sum $\bigoplus K_0(C^\ast_r G_H)$. The non-vanishing of $[\alpha]$ is then a consequence of the non-vanishing of the top degree Chern classes.

This paper is organized as follows. In Section~\ref{sec-monodromy-groupoids}, we briefly recall the definition of monodromy groupoids and holonomy groupoids of a foliated manifold. In Section~\ref{sec-groupoid-algebra}, we review the notion of Haar system on Lie groupoids, the construction of full and reduced groupoid $C^\ast$-algebras and introduce $C^\ast(G,A)$ groupoid $C^\ast$-algebras with coefficient in another $C^\ast$-algebra. In Section~\ref{sec-rosenberg-index}, under the assumption that $(M,F)$ is a foliation with $F$ spin and even dimensional, we define the foliation counterpart of Rosenberg index $[\alpha] \in K_0(C^\ast G_M)$ and relate it to the longitudinal index element. In Section~\ref{sec-twisted-rosenberg-index}, we define the Rosenberg index twisted by a Hilbert $C^\ast$-module bundle. In Section~\ref{sec-hilbert-module}, we construct a Hilbert module at the presence of a leafwise flat Hilbert $Q$-module bundle. This Hilbert module will later determines a $KK$-theory element  which play the role of \eqref{eq-k-map-homomorphism}. In Section \ref{sec-enlargeable-foliation}, we write down the definition of the genuinely leafwise flat Hilbert $Q$-module bundle out of the enlargeability of $(M,F)$ and prove the non-vanishing of $[\alpha]$. In Section~\ref{sec-reduction}, we deal with odd dimensional $F$. We define $[\alpha]\in K_1(C^\ast G_M)$ and show how to reduce the non-vanishing problem to the even dimensional case.

\section{Monodromy groupoids and holonomy groupoids}\label{sec-monodromy-groupoids}
Let $(M,F)$ be a compact foliation, we shall denote the monodromy groupoid by $G_M$ and the holonomy groupoid by $G_H$. The unit space of $G_H$ is the compact manifold $M$, the morphism space is the set of holonomy classes of curves along leaves of $(M,F)$. The range map and the source map $r,s: G_H \to M$ are given by sending curves to their terminal and initial points respectively. Groupoid multiplications are given by concatenation of curves.

\begin{proposition}[{\cite[Prop~5.6]{MoerdijkMrcun03}}]\label{prop-smooth-structure-holonomy}
	The morphism space of holonomy groupoid $G_H$ has a manifold structure.
\end{proposition}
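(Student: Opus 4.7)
The plan is to produce an explicit smooth atlas on the morphism space by encoding each holonomy class as a chain of foliation charts. Write $p = \dim F$ and $q = \operatorname{codim} F$, so $\dim M = p+q$; I will build charts of dimension $2p+q$.

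First, I fix a foliation atlas $\{U_\alpha\}$ in which each chart is diffeomorphic to $P_\alpha \times T_\alpha$ with $P_\alpha$ parametrising plaques (dimension $p$) and $T_\alpha$ a local transversal (dimension $q$). When two such charts have intersecting plaques, there is a canonical elementary holonomy diffeomorphism $h_{\alpha\beta}\colon V_{\alpha\beta}\subset T_\alpha\to T_\beta$ defined by the rule that two transverse points are equivalent if they lie on the same plaque. Given a leafwise path $\gamma$ from $x$ to $y$, compactness lets me cover $\gamma$ by finitely many such charts in order $U_{\alpha_0},\ldots,U_{\alpha_k}$; the germ at $x$ of the composition
\begin{equation*}
h_\gamma := h_{\alpha_{k-1}\alpha_k}\circ\cdots\circ h_{\alpha_0\alpha_1}\colon T_{\alpha_0}\to T_{\alpha_k}
\end{equation*}
is independent of chain refinements and of reparametrisation, and by definition agrees with the holonomy germ of $\gamma$; thus $[\gamma]\in G_H$ is recorded faithfully by this germ.

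Next, for each $[\gamma]\in G_H$ from $x$ to $y$ I choose such a chain and define
\begin{equation*}
W_{[\gamma]} = \bigl\{(x',y')\in U_{\alpha_0}\times U_{\alpha_k} : \tau_k(y') = h_\gamma(\tau_0(x'))\bigr\},
\end{equation*}
where $\tau_0,\tau_k$ denote the transverse projections. Each $(x',y')\in W_{[\gamma]}$ carries a canonical holonomy class from $x'$ to $y'$ obtained by the same chain, yielding a map $\Phi_{[\gamma]}\colon W_{[\gamma]}\to G_H$. The set $W_{[\gamma]}$ is a smooth manifold of dimension $(p+q)+p = 2p+q$, realised concretely as the fibre product $U_{\alpha_0}\times_{T_{\alpha_k}}U_{\alpha_k}$ via $h_\gamma\circ\tau_0$ and $\tau_k$.

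Finally, I would verify: (i) $\Phi_{[\gamma]}$ is injective and its image depends only on $[\gamma]$, not on the chain; (ii) on overlaps $\Phi_{[\gamma_1]}^{-1}\circ\Phi_{[\gamma_2]}$ is smooth because it amounts to composing smooth elementary holonomies; (iii) source, range, inversion, and composition are smooth in these coordinates, being projections and concatenations of chain descriptions. The main obstacle is (i): showing that two different chart chains realising the same $[\gamma]$ parametrise the same subset of $G_H$, which reduces to the cocycle relation $h_{\alpha\gamma}=h_{\beta\gamma}\circ h_{\alpha\beta}$ on triple overlaps together with the fact that any two chains along $\gamma$ admit a common refinement. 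I would also record that the resulting topology is second countable but in general fails to be Hausdorff, so the proposition is understood in the generality of non-Hausdorff smooth manifolds.
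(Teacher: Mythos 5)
Your construction is correct in outline, and it is essentially the chart-chain argument of the cited reference (Moerdijk--Mr\v{c}un), which differs in bookkeeping from the proof given in the paper. You encode the holonomy of $\gamma$ as a composite $h_\gamma$ of elementary holonomy diffeomorphisms attached to a finite chain of foliation charts, and take as chart the fibre product $U_{\alpha_0}\times_{T_{\alpha_k}}U_{\alpha_k}$; the paper instead fixes foliation charts only at the two endpoints, chooses a single transverse sliding map $H\colon T_1\times[0,1]\to M$ along $\gamma$ with $H(\cdot,0)\in T_1$, $H(\cdot,1)\in T_2$, and parametrises a neighbourhood of $[\gamma]$ by $T_1\times L_1\times L_2$, sending $(a,b,c)$ to the class of $\tau\circ H(a,\cdot)\circ\eta$. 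Both charts have dimension $2p+q$ and the local picture (transversal at the source times two plaques) is the same; your version makes the chart-compatibility verification more explicit and mechanical (cocycle relation plus common refinement of chains), whereas the paper's version hides that work in the statement that the class of $\tau\circ H(a,\cdot)\circ\eta$ is independent of the choices of $\tau$, $H$, $\eta$. Two small points to tighten in your write-up: $h_\gamma$ is only a germ, so it is defined on an open neighbourhood of $x_T$ in $T_{\alpha_0}$ rather than on all of $T_{\alpha_0}$, and $W_{[\gamma]}$ should be cut out over that neighbourhood; and your requirement (i) that the \emph{image} of the chart depend only on $[\gamma]$ is stronger than needed and not quite what holds---different chains give charts agreeing only near the fibre over $x_T$---what you actually need is smooth compatibility on overlaps, which is exactly your point (ii). With these adjustments your argument goes through, including the observation that the resulting manifold is in general non-Hausdorff, which the paper records separately.
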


\begin{proof}
	Let $\gamma \in G_H$ be some curves in a leaf of the foliation $(M,F)$. We shall construct an open  neighborhood of $\gamma$ which is homeomorphic to some Euclidean space. 
	
	Assume that $r(\gamma)=x$ and $s(\gamma)=y$. Pick local foliation charts $x\in U=T_1\times L_1 \to \mathbb{R}^p\times \mathbb{R}^q$ with $x=(x_T,x_L)\in U$ and $y\in V=T_2\times L_2\to \mathbb{R}^p \times \mathbb{R}^q$ with $y=(y_T,y_L)\in V$. If we pick two foliation charts small enough, there is a smooth map 
	\begin{equation}\label{eq-map-H}
	H: T_1\times [0,1] \to T_2
	\end{equation}
	such that $H(x_T,t)=\gamma(t)$ and $H(\ast,t)$ is a curve within some leaves connecting $H(\ast,0)\in T_1$ and $H(\ast,1)\in T_2$. Now we can define a map 
\begin{equation}\label{eq-smooth-structure-holonomy-groupoid}	
	T_1\times L_1\times L_2\to \holonomy(M,F)	
\end{equation}
	which assign $(a, b, c)\in T_1\times L_1\times L_2$ the curve $\tau\circ H(a,t) \circ \eta$ where $\tau$ is any curve connecting $(a,b)$ to $(a,x_L)$ in $U$, $H$ is the smooth map described in \eqref{eq-map-H} with $H(a,0)=(a,x_L)$ and $\eta$ is any curve connecting $H(a,1)$ with $(y_T, c)$ in V. The map \eqref{eq-smooth-structure-holonomy-groupoid} is well-defined since the holonomy class of $\tau\circ H(a,t) \circ \eta$ is independent of the choice of $\tau, H, \eta$. It is also clear that \eqref{eq-smooth-structure-holonomy-groupoid} is injective, and form a topological basis. In this way, we define the local Euclidean structure, and hence, a manifold structure of $G_H$.
\end{proof}

\begin{remark}
	As for the monodromy groupoid $G_M$, the unit space  is given by $M$, the morphism space is the set of homotopy classes of curves along leaves of $(M,F)$, the manifold structure and the source and range maps are given in a similar way.
\end{remark}

	The source fibers of $G_H$ and $G_M$ over $x\in M$ is the holonomy cover and universal cover of the leaf passing through $x\in M$ respectively.

\begin{example}
	If $F=TM$, the holonomy groupoid degenerates into the pair groupoid, namely $G_H = M\times M$. Under the same assumption $F=TM$, the morphism space of the monodromy groupoid is given by the space of homotopy classes of all curves in $M$. In this particular case, $G_M$ is usually called fundamental groupoid and  denoted by $\Pi(M)\rightrightarrows M$. It can be shown that the fundamental groupoid is Morita equivalent to fundamental group taken as groupoid over a single point. Hence, their corresponding groupoid $C^\ast$-algebras are Morita equivalent.
\end{example}

\section{Groupoid $C^\ast$-algebras}\label{sec-groupoid-algebra}
Parallel to the notion of Haar measures on locally compact topological groups, there is a notion of Haar systems on Lie groupoids.

\begin{definition}\label{def-Haar-system}
	Let $G\rightrightarrows G^{(0)}$ be a Lie groupoid, a family of measures $\{\mu_x\}_{x\in G^0}$ is called a Haar system on $G$ if
	\begin{enumerate}
		\item 
		The measure $\mu_x$ is supported on the source fiber $G_x$;
		\item
		For any smooth compactly supported function $f$ on $G$, the function on the unit space $G^{(0)}$ given by the assignment 
		\begin{equation*}
			x\mapsto \int_{G_x} f(\gamma) d\mu_x(\gamma)
		\end{equation*}
		is smooth;
		\item
		Let $\eta\in G$, $f$ be any smooth compactly supported function on $G$, then
		\begin{equation*}
			\int_{G_{s(\eta)}} f(\gamma) d\mu_{s(\eta)}(\gamma) = \int_{G_{r(\eta)}} f(\gamma\circ \eta) d\mu_{r(\eta)}(\gamma).
		\end{equation*}
	\end{enumerate}
	The above family of measures is sometimes referred to as right invariant Haar system. Left invariant Haar system $\{\mu^x\}_{x\in G^0}$ can be defined in a similar way where we replace the source fibers $G_x$ with the range fibers $G^x$.
\end{definition}

\begin{example}
	Recall that $(M,F)$ is a compact foliation. Fix a metric on $F$, then there is an induced measure $\{\mu_x\}_{x\in M}$ on each leaf $L_x\subset M$. The leafwise measures, in turn, determine measures $\{\mu_x^H\}$ on their holonomy covers $G_{H,x}$ and measures $\{\mu_x^M\}$ on universal covers $G_{M,x}$ accordingly. One can check that these measures form Haar systems on $G_H$ and $G_M$ respectively.
\end{example}

At the presence of a Haar system $\{\mu_x\}_{x\in G^{(0)}}$, the space of compactly supported smooth functions on groupoid $G$ can be made into an algebra. Let $f,g\in C^\infty_c(G)$, the multiplication $f\ast g$ is given by
\begin{equation}\label{eq-multiplication-of-groupoid-algebra}
	f\ast g (\gamma) = \int_{\gamma_1 \in G_{s(\gamma)}} f(\gamma\circ \gamma_1^{-1}) g(\gamma_1) d\mu_{s(\gamma)}(\gamma_1)
\end{equation}
and the adjoint is given by 
\begin{equation}
	f^\ast(\gamma) = \overline{f(\gamma^{-1})}.
\end{equation}

\begin{remark}
	In general, the monodromy groupoid $G_M$ and the holonomy groupoid $G_H$ may not be Hausdorff. We need to be careful with the definition of $C^\infty_c(G)$. Since $G_M$ and $G_H$ all have smooth manifold structure, every point in the groupoid have Hausdorff local coordinate chart. According to \cite{Connes80}, the space $C^\infty_c(G)$ is defined to be the span of functions each of which is smooth on a Hausdorff chart of $G$ and vanishes outside a compact subset of the Hausdorff chart. More precisely, a typical function in $C^\infty_c(G)$ can be written as finite sum
	\[
	f=\sum_i f_i,
	\]
	where $f_i$ is smooth function on a Hausdorff chart $U_i\subset G$ that vanishes outside a compact subset of $U_i$. If $G$ is indeed Hausdorff, then so defined $C^\infty_c(G)$ has its usual meaning (see \cite{Paterson99} for more detail).
\end{remark}

\begin{definition}
	Let $f\in C^\infty_c(G)$ and define
\begin{equation*}
	||f||_I = \sup_{x\in G^{(0)}}\left\{ \int_{G_x} |f(\gamma)|d\mu_x(\gamma), \int_{G_x} |f(\gamma^{-1})|d\mu_x(\gamma)\right\}.
\end{equation*}
It is easy to check that $||\cdot ||_I$ is a norm. We shall say a representation $\varphi: C^\infty_c(G) \to \mathcal{B}(\mathcal{H}_{\varphi})$ is bounded if it satisfies
\begin{equation*}
	||\varphi(f)||_{\mathcal{B}(\mathcal{H}_{\varphi})} \leq ||f||_I
\end{equation*}
for all $f\in C^\infty_c(G)$. The full groupoid $C^\ast$-algebra is the completion of $C^\infty_c(G)$ with respect to the norm
\begin{equation*}
	\sup_\varphi ||\varphi(f)||_{\mathcal{B(H_\varphi)}},
\end{equation*}
where $\varphi$ ranges over all bounded representations of $C^\infty_c(G)$. The full groupoid $C^\ast$-algebra is usually  denoted by $C^\ast G$.
\end{definition}

Analogous to the fact that the holonomy group at a fixed point is a quotient of the fundamental group of the leaf passing through the fixed point, there is a canonical quotient map $\pi: G_M \to G_H$ which sends the universal cover of a leaf to its holonomy cover. There is a homomorphism of algebras $\Phi: C^\infty_c(G_M) \to C^\infty_c(G_H)$ which is given by
\[
\Phi(f)(\eta) = \sum_{\pi(\gamma)=\eta} f(\gamma).
\]
\begin{proposition}\label{prop-canonical-map-monodromy-to-holonomy}
	The map $\Phi$ extends to a $C^\ast$-algebras homomorphism $C^\ast G_M \to C^\ast G_H$.
\end{proposition}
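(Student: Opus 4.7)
The plan is to show first that $\Phi$ is a $*$-algebra homomorphism on the convolution algebras $C^\infty_c(G_M) \to C^\infty_c(G_H)$, then establish the contractive estimate $\|\Phi(f)\|_I \leq \|f\|_I$, and finally invoke the universal property of the full groupoid $C^*$-norm. Once the $I$-norm estimate is in place, every bounded representation $\varphi$ of $C^\infty_c(G_H)$ pulls back along $\Phi$ to a bounded representation $\varphi \circ \Phi$ of $C^\infty_c(G_M)$, so taking the supremum over such $\varphi$ yields $\|\Phi(f)\|_{C^* G_H} \leq \|f\|_{C^* G_M}$, and $\Phi$ extends continuously to the full $C^*$-algebras.

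For the algebraic part, I would use the fact that the projection $\pi: G_M \to G_H$ restricts on each source fibre to the covering map from the universal cover of the leaf through $x$ to its holonomy cover, which has discrete fibres. Hence for $f \in C^\infty_c(G_M)$ only finitely many $\gamma$ with $\pi(\gamma) = \eta$ lie in the support of $f|_{G_{M,x}}$, so $\Phi(f)(\eta)$ is a finite sum and, using the Hausdorff-chart decomposition described earlier, defines an element of $C^\infty_c(G_H)$. The identities $\Phi(f^*) = \Phi(f)^*$ and $\Phi(f \ast g) = \Phi(f) \ast \Phi(g)$ are then direct computations, using that $\pi$ is a morphism of Lie groupoids intertwining source, range and multiplication, and that the Haar systems $\{\mu^M_x\}$ and $\{\mu^H_x\}$ are both lifted from the same leafwise measure on the leaf through $x$.

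The analytic heart of the argument is the norm estimate $\|\Phi(f)\|_I \leq \|f\|_I$. Since the restriction $\pi: G_{M,x} \to G_{H,x}$ is a local isometry with discrete fibres for the lifted leafwise metrics, the standard covering-space change-of-variables formula gives, for any non-negative $h \in C_c(G_{M,x})$,
\begin{equation*}
\int_{G_{H,x}} \sum_{\pi(\gamma) = \eta} h(\gamma)\, d\mu^H_x(\eta) = \int_{G_{M,x}} h(\gamma)\, d\mu^M_x(\gamma).
\end{equation*}
Applying this with $h = |f|$ and with $h(\gamma) = |f(\gamma^{-1})|$, and then taking $\sup_{x \in M}$, yields the estimate at once. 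Combined with the first paragraph, this completes the extension.

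The main obstacle I expect is purely bookkeeping caused by the possible non-Hausdorffness of $G_M$ and $G_H$: the sum defining $\Phi(f)(\eta)$ must be interpreted via the Hausdorff-chart decomposition of $C^\infty_c$ recalled in the excerpt, and the algebraic and integral identities above must be pieced together from Hausdorff pieces, using the fact that the source fibres are themselves genuine (Hausdorff) Riemannian manifolds carrying the Haar measures. Once this bookkeeping is set up, the covering-space and representation-theoretic manipulations become routine.
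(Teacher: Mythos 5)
Your proposal is correct and follows essentially the same route as the paper, whose entire proof is the one-line estimate $\|\Phi(f)\|_{C^\ast G_H}\leq\|\Phi(f)\|_I\leq\|f\|_I$. Your write-up simply fills in what that line leaves implicit: the verification that $\Phi$ is a $\ast$-homomorphism with values in $C^\infty_c(G_H)$, the covering-space change-of-variables behind the $I$-norm contraction, and the pull-back of bounded representations along $\Phi$ that upgrades the bound $\|\Phi(f)\|_{C^\ast G_H}\leq\|f\|_I$ to $\|\Phi(f)\|_{C^\ast G_H}\leq\|f\|_{C^\ast G_M}$.
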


\begin{proof}
It is straightforward to check that $||\Phi(f)||_{C^\ast G_H} \leq ||\Phi(f)||_I \leq ||f||_I$.
\end{proof}

Within the set of bounded representations of groupoid algebra $C^\infty_c(G)$ there is a distinguished one called regular representation which is described as follows. Let $\{\mu_x\}$ be a right Haar system on the groupoid $G$. For any $x\in G^{(0)}$, the groupoid algebra $C^\infty_c(G)$ acts on the Hilbert space $L^2(G_x, \mu_x)$ as follow
\begin{equation*}
	\pi_x(f)\xi (\gamma) = \int_{\eta\in G_{x}} f(\gamma\circ \eta^{-1}) \xi(\eta) d\mu_x(\eta).
\end{equation*}
It is easy to check that this is a bounded representation. The completion of $C^\infty_c(G)$ with respect to the norm 
\begin{equation*}
	||f|| = \sup_{x\in G^{(0)}} ||\pi_x(f)||
\end{equation*}
is denoted by $C^\ast_r G$ and called the reduced groupoid $C^\ast$-algebra. By definition $||\cdot ||_{C^\ast_r G} \leq ||\cdot ||_{C^\ast G}$, so there is a canonical map $C^\ast G \to C^\ast_r G$.

Following the construction of groupoid $C^\ast$-algebra, we shall consider a construction of crossed product $C^\ast$-algebra. This algebra will be useful in the following sections. Let $B$ be a $C^\ast$-algebra, notice that the $C_c(G,B)$ has a $\ast$-algebra structure whose multiplication is given in the same way as in \eqref{eq-multiplication-of-groupoid-algebra} and the adjoint is given by $f^\ast(\gamma) = f(\gamma^{-1})^\ast$.
\begin{definition}
	Let $B$ be a $C^\ast$-algebra, let $f\in C_c(G, B)$, define a norm $||\cdot||_I$ on $C_c(G,B)$:
	\begin{equation*}
		||f||_I = \sup_{x\in G^{(0)}}\left\{ \int_{G_x} ||f(\gamma)||_Bd\mu_x(\gamma), \int_{G_x} ||f(\gamma^{-1})||_Bd\mu_x(\gamma)\right\}.
	\end{equation*}
	A representation $\varphi: C_c(G,B)\to \mathcal{B}(\mathcal{H}_\varphi)$ is called bounded if 
	\[
	||\varphi(f)||_{\mathcal{B}(\mathcal{H}_\varphi)} \leq ||f||_I
	\]
	for all $f\in C_c(G,B)$.
	The $C^\ast$-algebra $C^\ast(G,B)$ is defined to be the completion of $C_c(G, B)$ with respect to the norm
	\[
	||f||_{C^\ast(G,B)}= \sup_\varphi ||\varphi(f)||_{\mathcal{B}(\mathcal{H}_\varphi)},
	\]
	where $\varphi$ ranges over all bounded representations.
\end{definition}

\begin{proposition}
	Let $B\to C$ be a homomorphism between $C^\ast$-algebras , then it induces a homomorphism  $C^\ast (G,B) \to C^\ast (G,C)$.
\end{proposition}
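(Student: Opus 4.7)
The plan is to construct the induced map first at the level of the $\ast$-algebras $C_c(G,-)$ by post-composition, then show that the universal $C^\ast$-norms are compatible so that the map extends to the completions. Let $\phi: B \to C$ denote the given homomorphism of $C^\ast$-algebras. For $f \in C_c(G,B)$, define $\phi_\ast(f) \in C_c(G,C)$ by $\phi_\ast(f)(\gamma) = \phi(f(\gamma))$. Since $\phi$ is continuous and $B$-linear in the appropriate sense, $\phi_\ast(f)$ is again compactly supported and continuous on each Hausdorff chart, so it lands in $C_c(G,C)$.

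Next I would verify that $\phi_\ast$ is a $\ast$-homomorphism. Multiplicativity follows by writing
\begin{equation*}
\phi_\ast(f\ast g)(\gamma) = \phi\!\left( \int_{G_{s(\gamma)}} f(\gamma\circ \eta^{-1}) g(\eta)\, d\mu_{s(\gamma)}(\eta) \right) = \int_{G_{s(\gamma)}} \phi(f(\gamma\circ \eta^{-1})) \phi(g(\eta))\, d\mu_{s(\gamma)}(\eta),
\end{equation*}
where we have used that $\phi$ is linear, multiplicative, and commutes with Bochner-type integrals against a fixed scalar measure (this follows by approximating $f$ and $g$ with simple functions, or equivalently by noting that $\phi$ is continuous). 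Compatibility with the involution is immediate from $\phi(b^\ast)=\phi(b)^\ast$.

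The main step is extension to the completions. The key observation is that every $C^\ast$-homomorphism is norm-decreasing, so $\|\phi(b)\|_C \leq \|b\|_B$ for $b \in B$. Substituting into the definition of $\|\cdot\|_I$ gives $\|\phi_\ast(f)\|_I \leq \|f\|_I$. Now let $\varphi : C_c(G,C) \to \mathcal{B}(\mathcal{H}_\varphi)$ be any bounded representation. Then $\varphi \circ \phi_\ast : C_c(G,B) \to \mathcal{B}(\mathcal{H}_\varphi)$ is a $\ast$-representation, and it satisfies
\begin{equation*}
\|\varphi(\phi_\ast(f))\|_{\mathcal{B}(\mathcal{H}_\varphi)} \leq \|\phi_\ast(f)\|_I \leq \|f\|_I,
\end{equation*}
so it is bounded in the sense required by the definition. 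Taking the supremum over all bounded representations $\varphi$ of $C_c(G,C)$ yields
\begin{equation*}
\|\phi_\ast(f)\|_{C^\ast(G,C)} = \sup_\varphi \|\varphi(\phi_\ast(f))\| \leq \sup_{\psi} \|\psi(f)\| = \|f\|_{C^\ast(G,B)},
\end{equation*}
where $\psi$ ranges over bounded representations of $C_c(G,B)$, since every such $\varphi \circ \phi_\ast$ appears among the $\psi$. Consequently $\phi_\ast$ extends continuously to a $\ast$-homomorphism $C^\ast(G,B) \to C^\ast(G,C)$.

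I expect no real obstacle here; the only subtlety is the interchange of $\phi$ with the convolution integral in checking multiplicativity, which is routine because the integral is taken with respect to a fixed scalar measure $\mu_{s(\gamma)}$ and $\phi$ is continuous and linear. The role of the fact that groupoid multiplication, involution, and the norm $\|\cdot\|_I$ all depend only on the pointwise $C^\ast$-algebra structure of the coefficient is what makes the construction functorial.
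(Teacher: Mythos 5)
Your argument is correct and follows essentially the same route as the paper: define the map on $C_c(G,B)$ by post-composition, use contractivity of the $C^\ast$-homomorphism to get $\|\phi_\ast(f)\|_I \leq \|f\|_I$, and conclude that composing any bounded representation of $C_c(G,C)$ with $\phi_\ast$ gives a bounded representation of $C_c(G,B)$, which yields the norm inequality needed to extend to the completions. The extra detail you supply on interchanging $\phi$ with the convolution integral is a point the paper treats as immediate, but it is the same proof.
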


\begin{proof}
	It is clear that the homomorphism $B \to C$ induces a $\ast$-homomorphism at the level of continuous maps $\pi: C_c(G,B) \to C_c(G,C)$. Let $\varphi: C_c(G,C) \to \mathcal{B}(\mathcal{H}_\varphi)$ be any bounded representation. Then the composition
	\begin{equation}\label{eq-composition-bounded-repn}
	C_c(G,B) \to C_c(G,C) \to \mathcal{B}(\mathcal{H}_\varphi)
	\end{equation}
	is also a representation. For any $f\in C_c(G,B)$, we have the following estimate
	\[
	||\varphi(\pi(f))||_{\mathcal{B}(\mathcal{H}_\varphi)} \leq ||\pi(f)||_I \leq ||f||_I,
	\]
	where the first inequality is a consequence of the boundedness assumption on $\varphi$ and the second inequality is implied by the fact that homomorphism between $C^\ast$-algebra is contractive. Therefore the composition \eqref{eq-composition-bounded-repn} is still a bounded representation of $C_c(G,B)$ and $||f||_{C^\ast (G,B)}\geq ||\pi(f)||_{C^\ast (G,C)}$ for all $f\in C_c(G.B)$. This completes the proof.
\end{proof}

\begin{proposition}\label{prop-exact-sequence-of-new-C-algebra}
	Let $B$ be a $C^\ast$-algebra, $J\subset B$ an ideal. If $G^{(0)}$ is compact, the exact sequence of $C^\ast$-algebras $0\to J\to B\to B/J\to 0$ induces an exact sequence
	\begin{equation}\label{eq-exact-sequence-algebra}
	0\to C^\ast(G,J)\to  C^\ast(G,B)\to C^\ast(G,B/J)\to 0.
	\end{equation}
\end{proposition}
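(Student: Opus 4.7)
The plan is the standard three-step argument: establish surjectivity of $\pi_\ast\colon C^\ast(G,B)\to C^\ast(G,B/J)$, injectivity of $\iota_\ast\colon C^\ast(G,J)\to C^\ast(G,B)$, and the equality $\operatorname{image}(\iota_\ast)=\ker(\pi_\ast)$; each statement is first established on the dense $\ast$-subalgebra $C_c(G,\cdot)$ and then passed to the completion.

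For surjectivity, I would show that $C_c(G,B)\to C_c(G,B/J)$ is already surjective. Given $g\in C_c(G,B/J)$, decompose $g=\sum_i g_i$ with each $g_i$ supported in a Hausdorff chart $U_i\subset G$ (possible by the Connes-style definition of $C_c(G,\cdot)$), fix a continuous Bartle--Graves section $\sigma\colon B/J\to B$ of $\pi$, and set $\tilde g=\sum_i \sigma\circ g_i\in C_c(G,B)$; then $\pi\circ\tilde g=g$. Density of $C_c(G,B/J)$ in $C^\ast(G,B/J)$ together with closedness of the image of any $\ast$-homomorphism of $C^\ast$-algebras promotes this to surjectivity of $\pi_\ast$.

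For injectivity, I would show that the two $C^\ast$-norms on $C_c(G,J)$---the one coming from $C^\ast(G,J)$ and the restriction of the one coming from $C^\ast(G,B)$---coincide. One inequality is immediate, since every bounded representation of $C_c(G,B)$ restricts to a bounded representation of $C_c(G,J)$. For the reverse, I would extend a nondegenerate bounded representation $\rho\colon C_c(G,J)\to\mathcal{B}(\mathcal{H})$ to $C_c(G,B)$ by
\[
\tilde\rho(g)\,\rho(f)\xi:=\rho(g\ast f)\xi,\qquad g\in C_c(G,B),\ f\in C_c(G,J),\ \xi\in\mathcal{H}.
\]
The convolution $g\ast f$ lies in $C_c(G,J)$ because $J$ is an ideal and \eqref{eq-multiplication-of-groupoid-algebra} then takes values in $B\cdot J\subset J$. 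Well-definedness on the dense subspace $\rho(C_c(G,J))\mathcal{H}$ and the bound $\|\tilde\rho(g)\|\le\|g\|_I$ are both verified using an approximate identity of $C_c(G,J)$ built from an approximate unit of $J$ together with a cutoff on $G^{(0)}$; the compactness of $G^{(0)}$ is crucial to keep this cutoff bounded in $I$-norm.

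Finally, the containment $\operatorname{image}(\iota_\ast)\subset\ker(\pi_\ast)$ is immediate on the $C_c$ level and hence globally. For the reverse, I would pass to the $C^\ast$-quotient $Q=C^\ast(G,B)/\operatorname{image}(\iota_\ast)$ and construct a bounded $\ast$-homomorphism $C_c(G,B/J)\to Q$ sending $g$ to the class of any Bartle--Graves lift $\tilde g\in C_c(G,B)$; well-definedness is automatic because two lifts differ by an element of $C_c(G,J)$, and boundedness again reduces to the approximate-identity argument of the previous step. This makes the induced surjection $\bar\pi_\ast\colon Q\to C^\ast(G,B/J)$ isometric, hence an isomorphism, giving middle exactness. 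The main obstacle is the representation-extension step: obtaining the estimate $\|\tilde\rho(g)\|\le\|g\|_I$ with constant one (rather than up to a multiplicative constant) requires a delicate approximate-identity argument adapted to the possibly non-Hausdorff, Connes-style definition of $C_c(G,\cdot)$, and it is exactly here that the compactness of $G^{(0)}$ is decisively used.
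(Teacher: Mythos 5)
Your first two steps are essentially sound. The Bartle--Graves lift (normalized so that $\sigma(0)=0$, to preserve compact supports) is a legitimate substitute for the paper's use of the surjectivity of $C_0(U^\prime)\otimes B\to C_0(U^\prime)\otimes B/J$ on charts, and your extension $\tilde\rho(g)\rho(f)\xi=\rho(g\ast f)\xi$ is exactly the paper's formula \eqref{eq-representation-from-ideal}; your route to the bound $\|\tilde\rho(g)\|\le\|g\|_I$ via a convolution approximate identity $u_\lambda=e_\lambda\cdot\psi_\lambda$ with $\|u_\lambda\|_I\le 1$ (so that $\tilde\rho(g)\rho(f)\xi=\lim_\lambda\rho(g\ast u_\lambda)\rho(f)\xi$ and $\|\rho(g\ast u_\lambda)\|\le\|g\ast u_\lambda\|_I\le\|g\|_I$) is a workable alternative to the paper's $2^k$-th--root estimate in Lemma~\ref{lem-estimate-power}. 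Note, however, that this step needs no compactness of $G^{(0)}$ at all, contrary to your closing claim; the paper's injectivity argument likewise uses none.

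The genuine gap is at middle exactness, which you dismiss with ``boundedness again reduces to the approximate-identity argument of the previous step.'' It does not. What must be proved there is the estimate $\inf_{h\in C^\ast(G,J)}\|\tilde g+h\|_{C^\ast(G,B)}\le\|g\|_{I,B/J}$ for $g\in C_c(G,B/J)$ with lift $\tilde g\in C_c(G,B)$, i.e.\ the quotient norm of the class of $\tilde g$ must be controlled by the $I$-norm computed with the $B/J$-norms of the values of $g$ --- a quantity that can be far smaller than $\|\tilde g\|_{I,B}$, which is all your step-two argument can see. Nothing in the representation-extension step produces this; without it you cannot conclude that composing the dense embedding $C_c(G,B/J)\hookrightarrow C^\ast(G,B)/C^\ast(G,J)$ with a faithful representation gives a \emph{bounded} representation, hence no isometry and no exactness in the middle. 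The paper supplies exactly this missing estimate: it damps the lift by an approximate unit $\{v_i\}$ of $J$, proves $\|f\|_{C^\ast(G,B)/C^\ast(G,J)}=\lim_i\|(1-v_i)^{1/2}\bar f\|_{C^\ast(G,B)}\le\lim_i\|(1-v_i)^{1/2}\bar f\|_I$, and then identifies the limit of the fiberwise integrals with $\|f\|_{I,B/J}$ using the pointwise monotone convergence $\|(1-v_i)^{1/2}\bar f(\gamma)\|_B\to\|f(\gamma)\|_{B/J}$ together with Dini's theorem on the compact unit space. This is the one place where compactness of $G^{(0)}$ is genuinely used, so your proposal both omits the decisive estimate and mislocates the role of that hypothesis.
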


\begin{proof}
	
	We first notice that the sequence at the level of continuous maps 
	\[
	0\to C_c(G,J) \to C_c(G,B) \to C_c(G,B/J) \to 0
	\]
	is exact. Indeed, the injectivity of the second arrow and the exactness in the middle term is clear. We only need to show the surjectivity of the third arrow. Pick a $f\in C_c(G,B/J)$ whose support is a compact subset $K$ of a coordinate chart $U\subset G$. There is an open neighborhood $U^\prime$ of $K$ such that the closure of $U^\prime$ is contained in $U$ and is compact. Then $f\in C_0(U^\prime)\otimes B/J$. Since $C_0(U^\prime)\otimes B\to C_0(U^\prime)\otimes B/J$ is surjective (see \cite[Sec~3.7]{NateOzawa08} for example), there is a preimage in $C_0(U^\prime)\otimes B \subset C_c(U,B) \subset C_c(G,B)$. General elements in $C_c(G,B)$ are spanned by those $f$'s. This proves the surjectivity of $C_c(G,B) \to C_c(G,B/J)$.
	
	It is clear that any bounded representation of $C_c(G,B)$ restricts to a bounded representation of $C_c(G,J)$. The $C^\ast$-norm $||\cdot||_{C^\ast(G,J)}$ on $C_c(G,J)$ is greater than or equal to the restriction of $||\cdot||_{C^\ast (G,B)}$ to $C_c(G,J)$. To show the injectivity of the second arrow in \eqref{eq-exact-sequence-algebra}, it suffice to show that any bounded representation of $C_c(G,J)$ extends to a bounded representation of $C_c(G,B)$.
	Indeed, let $\varphi: C_c(G,J)\to \mathcal{B}(\mathcal{H}_\varphi)$ be a bounded representation of $C_c(G,J)$, let $$\mathcal{H}^\prime=\text{closure of }\operatorname{span}\left\{\varphi(f)h \mid f\in C_c(G,J), h\in \mathcal{H}_\varphi\right\} \subset \mathcal{H}_\varphi$$
	be the Hilbert subspace of $\mathcal{H}_\varphi$. The algebra $C_c(G,B)$ acts on $\mathcal{H}^\prime$ in the following way:
	\begin{equation}\label{eq-representation-from-ideal}
	g.\varphi(f)h=\varphi(gf)h
	\end{equation}
	for all $g\in C_c(G,B)$ and $f\in C_c(G,J)$. To proceed, we need the following lemma.
	\begin{lemma}\label{lem-estimate-power}
		The representation \eqref{eq-representation-from-ideal} is bounded.
	\end{lemma}
	\begin{proof}
		Let $g\in C_c(G,B)$ and $f\in C_c(G,J)$. Since $\varphi$ is a bounded representation, we have $||\varphi(gf)||\leq ||g||_I\cdot ||f||_I$. Moreover, $||\varphi(gf)||^2 = ||\varphi(f^\ast g^\ast) \varphi(gf)||\leq ||\varphi(f)||\cdot ||g||_I^2 \cdot ||f||_I$. By induction, we have $||\varphi(gf)||^{2^k} \leq ||\varphi(f)||^{2^k-1}\cdot ||g||^{2^k}_I\cdot ||f||_I$ for all integers $k$. Taking the $2^k$-th root, we have 
		$
		||\varphi(gf)||\leq ||\varphi(f)||^{1-2^{-k}}\cdot ||g||_I\cdot ||f||_I^{2^{-k}}
		$
		for all $k\in \mathbb{N}$. Let $k\to \infty$, we get 
		\begin{equation}\label{eq-estimate-of-representation}
		||\varphi(gf)||\leq ||g||_I\cdot ||\varphi(f)||.
		\end{equation}
		Let $\{e_i\}$ be norm $1$ approximate identity of $C^\ast(G,J)$. Choose a sequence $\{v_i\}$  from $C_c(G,J)$ such that $||v_i-e_i||_{C^\ast (G,J)}\leq 1/i$. Then according to \eqref{eq-estimate-of-representation}, we have
		\[
		\varphi(gf)v=\lim_{i\to \infty} \varphi(gv_if)v.
		\]
		Moreover, 
		\[
		||\varphi(gv_if)v||\leq ||\varphi(gv_i)||\cdot ||\varphi(f)v||_{{\mathcal{H}_\varphi}}\leq ||g||_I\cdot ||\varphi(v_i)|| \cdot ||\varphi(f)v||_{\mathcal{H}_\varphi}.
		\]
		Taking the limit $i\to \infty$, we have 
		\[
		||\varphi(gf)v||_{\mathcal{H}_\varphi}\leq ||g||_I\cdot ||\varphi(f)v||_{\mathcal{H}_\varphi}.
		\]
		Since elements of the form $\varphi(f)v$ form a dense subspace of $\mathcal{H}^\prime$, the above estimate completes the proof of the lemma. 
	\end{proof}
	
	This shows that for any $f\in C_c(G,J)$ we have $||f||_{C^\ast(G,J)} =||f||_{C^\ast(G,B)}$. Hence the second arrow of \eqref{eq-exact-sequence-algebra} is injective. Since the range of homomorphism between $C^\ast$-algebras is close, the third map of \eqref{eq-exact-sequence-algebra} is surjective. It remains to show the exactness in the middle of \eqref{eq-exact-sequence-algebra}.
	
	A priori, the sequence \eqref{eq-exact-sequence-algebra} is only a complex, namely the composition of the second arrow and the third arrow is zero in \eqref{eq-exact-sequence-algebra}. There is a quotient map
	\begin{equation}\label{eq-quotient-isomorphism}
	C^\ast(G,B)/C^\ast(G,J)\to C^\ast(G,B/J).
	\end{equation}
	On the other hand, $C_c(G,B)/C_c(G,J)\cong C_c(G,B/J)$ sits inside $C^\ast(G,B/J)$. So there is a dense embedding 
	\begin{equation}\label{eq-dense-embedding}
	C_c(G,B/J)\hookrightarrow C^\ast(G,B)/C^\ast(G,J)
	\end{equation} 
	of algebras. Pick any faithful representation $\Psi: C^\ast(G,B)/C^\ast(G,J)\to \mathcal{B}(\mathcal{H}_\Psi)$, then the composition with \eqref{eq-dense-embedding} gives a representation $\pi$ of $C_c(G,B/J)$. We shall now prove that $\pi$ is a bounded representation. Let $f\in C_c(G,B/J)$ and $\bar{f}\in C_c(G,B)$ be a lift of $f$. Then we have 
	\begin{align*}
	||\pi(f)||_{\mathcal{B}(\mathcal{H}_\Psi)} &= ||f||_{C^\ast(G,B)/C^\ast(G,J)}\\
	&=\inf_{h\in C^\ast(G,J)} ||\bar{f}+h||_{C^\ast (G,B)}.
	\end{align*}
	Let $\{v_i\}$ be approximate identity of $J$ such that $0<v_i\leq v_j<1$ in the unitalization of $J$ if $i\leq j$. Then for any $h\in C_c(G,J)$, we have $h^\ast(x)(1-v_i)h(x)\geq h^\ast(x)(1-v_j)h(x)$ if $i\leq j$ which implies $||(1-v_i)^{1/2}h(x)||_J\geq ||(1-v_j)^{1/2}h(x)||_J$ if $i\leq j$. Therefore, the function
	\[
	g_i(u)=\int_{G^u} ||(1-v_i)^{1/2}h(x)||_J dx
	\]
	is continuous in $u$ and decreasing in $i$. According to the Monotone convergence theorem, the function $g_i(u)$ pointwise converge to zero function. On the other hand, since $G^{(0)}$ is compact, according to the Dini theorem, $g_i(u)$ converges uniformly to zero function. This implies that $(1-v_i)^{1/2}f \to 0$ in $I$-norm and also in the norm of $C^\ast(G,J)$. To proceed, we need the following lemma.
	
	\begin{lemma}
		$||f||_{C^\ast (G,B)/C^\ast (G,J)} = \lim_{i\to \infty} ||(1-v_i)^{1/2}\bar{f}||_{C^\ast (G,B)}$.
	\end{lemma}

	\begin{proof}
		Fix $\varepsilon>0$, there is $h\in C_c(G, J)$ such that 
		\[
		||\bar{f}-h||_{C^\ast (G, B)}\leq ||f||_{C^\ast (G,B)/C^\ast (G,J)}+\varepsilon.
		\]
		Then
		\[
		 ||(1-v_i)^{1/2}\bar{f}||_{C^\ast (G,B)}\leq  ||(1-v_i)^{1/2}(\bar{f}-h)||_{C^\ast (G,B)}+ ||(1-v_i)^{1/2}h||_{C^\ast (G,B)}.
		\]
		Using the method of Lemma~\ref{lem-estimate-power}, we can show that $||(1-v_i)^{1/2}(\bar{f}-h)||_{C^\ast (G,B)}\leq ||(1-v_i)^{1/2}||\cdot ||\bar{f}-h||_{C^\ast (G,B)}.$ Therefore, choosing $i$ sufficiently large we can arrange that 
		\[
		||(1-v_i)^{1/2}\bar{f}||_{C^\ast (G,B)}\leq ||f||_{C^\ast (G,B)/C^\ast (G,J)}+2\varepsilon.
		\] 
		This completes the proof of the lemma.
	\end{proof}

	Thanks to the above lemma, we have $||\pi(f)||_{\mathcal{B}(\mathcal{H}_\Psi)}\leq \lim_{i\to \infty} ||(1-v_i)^{1/2}\bar{f}||_I$. Again, let
	\[
	g_i(u) = \int_{G^u} ||(1-v_i)^{1/2}\bar{f}(x)||_B dx
	\]
and apply the Dini theorem once again, we have $g_i(u)$ is uniformly convergent in $u$ and 
\[
\lim_{i\to \infty} \sup_u g_i(u) = \sup_u \int_{G^u} ||f||_{B/J}. 
\]
Overall, we have $||\pi(f)||_{\mathcal{B}(\mathcal{H}_\Psi)}\leq ||f||_I$.
	So $\pi$ is a bounded representation. Therefore the norm on $C^\ast(G,B)/C^\ast(G,J)$ is less than or equal to the norm on $C^\ast(G,B/J)$. Together with the fact that homomorphism between $C^\ast$-algebra is contractive, we have \eqref{eq-quotient-isomorphism} is an isomorphism. This completes the proof.
\end{proof}

\section{Rosenberg Index}\label{sec-rosenberg-index}

From this section on, except the last section, we shall assume that $F\to M$ is a spin vector bundle of even dimensional with spinor given by $S=S^+(F)\oplus S^-(F)$. Let $D_+$ be the positive part of the leafwise Dirac operator acting on $S$, which means the following
\begin{itemize}
	\item $D_+: C^\infty(M,S^{+}) \to C^\infty(M,S^{-})$ is a usual differential operator;
	\item For any smooth section $\xi$ of $S^+\to M$ and any leaf $L$ of $(M,F)$, the restriction $D_+ \xi|_L$ only depends on the restriction $\xi|_L$;
	\item For any leaf $L$ of $M$, $D_+|_L: C^\infty_c(L,S^{+}) \to C^\infty_c(L,S^{-})$ is the classical Dirac operator on $L$.
\end{itemize}
The notion of leafwise Dirac type operator or more generally, the notion of leafwise elliptic differential operator can be defined in a similar way (see \cite[Ch~2, Sec~9]{Connes94}). 

The Dirac operator $D_{+}|_L$ can be lifted to  universal covers $D_{+, \widetilde{L}}: C^\infty_c(\widetilde{L}, \pi^\ast S^{+})\to C^\infty_c(\widetilde{L}, \pi^\ast S^{-})$ where $\pi: \widetilde{L}\to L$ is the covering map. All those $D_{+,\widetilde{L}}$'s can be assembled to an operator $D_+: C^\infty_c(G_M, r^\ast S^{+})\to C^\infty_c(G_M, r^\ast S^{-})$ such that
\[
D_{+}f(\gamma)=\left(D_{+,\widetilde{L}_{s(\gamma)}} f\right) (\gamma),
\]
where $f\in C^\infty_c(G_M, r^\ast S^{+})$ and $\widetilde{L}_{s(\gamma)}$ is the universal cover of the leaf passing through $s(\gamma)\in M$. Similarly, the operator $D_{-}:  C^\infty_c(G_M, r^\ast S^{-}) \to C^\infty_c(G_M, r^\ast S^{+})$ can be defined.

\begin{proposition}\label{prop-complete-section-spinor-module}
The space $C^\infty_c(G_M,r^\ast S)$ can be completed into a Hilbert $C^\ast G_M$-module which will be denoted by $\mathcal{E}$. 
\end{proposition}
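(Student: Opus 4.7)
The plan is to equip $C^\infty_c(G_M, r^\ast S)$ with a right pre-Hilbert $C^\infty_c(G_M)$-module structure modelled on the convolution product~\eqref{eq-multiplication-of-groupoid-algebra} and then complete in the induced $C^\ast G_M$-valued norm. Fix a Euclidean metric on $F$, so that $S \to M$ inherits a Hermitian metric $\langle \cdot, \cdot \rangle_S$. For $\xi, \zeta \in C^\infty_c(G_M, r^\ast S)$ and $f \in C^\infty_c(G_M)$, I would define
\begin{align*}
(\xi \cdot f)(\gamma) &= \int_{\gamma_1 \in G_{M,s(\gamma)}} \xi(\gamma \circ \gamma_1^{-1}) \, f(\gamma_1) \, d\mu^M_{s(\gamma)}(\gamma_1), \\
\langle \xi, \zeta \rangle(\gamma) &= \int_{\gamma_1 \in G_{M,s(\gamma)}} \bigl\langle \xi(\gamma_1 \circ \gamma^{-1}), \zeta(\gamma_1) \bigr\rangle_{S_{r(\gamma_1)}} \, d\mu^M_{s(\gamma)}(\gamma_1).
\end{align*}
The integrand in the first line lies in $S_{r(\gamma)}$, and both arguments of $\langle \cdot, \cdot \rangle_S$ in the second line lie in $S_{r(\gamma_1)}$, so the formulas produce a section of $r^\ast S$ and a scalar-valued function on $G_M$ respectively; compact support and smoothness follow by the usual Hausdorff-chart bookkeeping for $C^\infty_c(G_M)$.

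Next I would verify the algebraic pre-Hilbert module axioms---sesquilinearity, $\langle \xi, \zeta \rangle^* = \langle \zeta, \xi \rangle$, and $\langle \xi, \zeta \cdot f \rangle = \langle \xi, \zeta \rangle \ast f$---by Fubini together with the change of variable $\eta \mapsto \eta \circ \gamma^{\pm 1}$ afforded by the right-invariance in part~(3) of Definition~\ref{def-Haar-system}. These manipulations are structurally identical to the ones used to show that $(C^\infty_c(G_M), \ast)$ is a $*$-algebra, the only additional bookkeeping being to track which fibre of $S$ the integrand lives in.

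The main obstacle is showing positivity of $\langle \xi, \xi \rangle$ inside $C^\ast G_M$. I would handle this by introducing the spinorial analog of the regular representations: for each $x \in M$ put $\mathcal{H}_x := L^2(G_{M,x}, r^\ast S; \mu^M_x)$ and let $\pi_x$ denote the convolution action of $C^\infty_c(G_M)$ on $\mathcal{H}_x$, which is bounded by the standard Young-type estimate. Unwinding the definitions realises $\pi_x(\langle \xi, \xi \rangle)$ as an operator of the form $T_\xi^* T_\xi$ built from the restriction $\xi|_{G_{M,x}}$, so that $\pi_x(\langle \xi, \xi \rangle) \geq 0$; equivalently, $\langle \xi, \xi \rangle$ can be identified with $\xi^\flat \ast \xi$ after dualising the spinor fibres via $\langle \cdot, \cdot \rangle_S$, which makes positivity in $C^\ast G_M$ automatic. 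With positivity in hand, $\|\xi\|_\mathcal{E} := \|\langle \xi, \xi \rangle\|_{C^\ast G_M}^{1/2}$ is a norm, the right $C^\infty_c(G_M)$-action is bounded and extends by continuity to an action of $C^\ast G_M$, and the completion is the required Hilbert module $\mathcal{E}$.
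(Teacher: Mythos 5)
Your formulas and overall route are exactly the paper's (same $C^\ast G_M$-valued inner product, same right action, completion in $\|\langle\xi,\xi\rangle\|_{C^\ast G_M}^{1/2}$), and the algebraic axioms are indeed the routine Fubini/invariance computations you describe. The genuine gap is in the positivity step. Showing $\pi_x(\langle\xi,\xi\rangle)=T_\xi^\ast T_\xi\geq 0$ for the regular representations $\pi_x$ on $L^2(G_{M,x},r^\ast S;\mu_x)$ only proves that the image of $\langle\xi,\xi\rangle$ in the \emph{reduced} algebra $C^\ast_r G_M$ is positive. The module here is over the \emph{full} algebra $C^\ast G_M$, and for a non-amenable groupoid (e.g.\ the monodromy groupoid of a foliation whose leaves have non-amenable fundamental groups) a self-adjoint element of $C^\ast G_M$ can have strictly larger spectrum than its regular image, so positivity in $C^\ast_r G_M$ does not imply positivity in $C^\ast G_M$. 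Your fallback, identifying $\langle\xi,\xi\rangle$ with ``$\xi^\flat\ast\xi$'', does not close this: $\xi^\flat$ and $\xi$ are sections of bundles, not elements of $C^\ast G_M$, so this expression is not exhibited as $a^\ast a$ for some $a$ in $C^\ast G_M$ (or in any $C^\ast$-algebra containing it that you have constructed). Without positivity in the full algebra you do not yet have the module Cauchy--Schwarz inequality, hence neither the triangle inequality for $\|\cdot\|_{\mathcal{E}}$ nor the boundedness of the right action.

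The standard repair is a frame (stabilization) argument, which makes positivity in the full algebra manifest: since $M$ is compact, choose finitely many sections $\sigma_1,\dots,\sigma_N\in C^\infty(M,S)$ with $\sum_j \sigma_j(m)\langle\sigma_j(m),v\rangle=v$ for all $v\in S_m$ (embed $S$ as a direct summand of a trivial bundle). Setting $f_j(\gamma):=\langle\sigma_j(r(\gamma)),\xi(\gamma)\rangle\in C^\infty_c(G_M)$ and using $r(\gamma_1\circ\gamma^{-1})=r(\gamma_1)$, one gets
\begin{equation*}
\langle\xi,\xi\rangle(\gamma)=\int_{\gamma_1\in G_{M,s(\gamma)}}\sum_j \overline{f_j(\gamma_1\circ\gamma^{-1})}\,f_j(\gamma_1)\,d\mu_{s(\gamma)}(\gamma_1)=\sum_j \bigl(f_j^\ast\ast f_j\bigr)(\gamma),
\end{equation*}
a finite sum of squares in $C^\infty_c(G_M)$, hence positive in $C^\ast G_M$. (Alternatively, check $\varphi(\langle\xi,\xi\rangle)\geq 0$ for \emph{every} bounded representation $\varphi$, not just the regular ones.) With this in place, nondegeneracy, Cauchy--Schwarz, boundedness of the action, and the completion go through as you and the paper assert.
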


\begin{proof}
	Let $\varphi, \psi \in C^\infty_c(G_M, r^\ast S)$ and $\gamma\in G_M$, the formula
	\begin{equation*}
		\langle \varphi, \psi \rangle(\gamma) = \int_{\gamma_1 \in G_{M, s(\gamma)}} \langle \varphi(\gamma_1\circ \gamma^{-1}), \psi(\gamma_1) \rangle d\mu_{s(\gamma)}(\gamma_1)
	\end{equation*}
defines a $C^\ast G_M$-valued inner product on $C^\infty_c(G_M, r^\ast S)$. Let $f\in C^\infty_c(G_M)$, it acts on $C^\infty_c(G_M, r^\ast S)$ by
	\begin{equation*}
		\varphi.f(\gamma) = \int_{\gamma_1\in G_{M, s(\gamma)}} \varphi(\gamma \circ \gamma_1^{-1})f(\gamma_1) d\mu_{s(\gamma)}(\gamma_1).
	\end{equation*}
It is easy to check that this inner product satisfies the pre-Hilbert module condition and $\langle \varphi, \varphi \rangle=0$ implies $\varphi=0$. The completion of $C^\infty_c(G_M, r^\ast S)$ under the norm
\begin{equation*}
	||\varphi||_{\mathcal{E}}^2 = ||\langle \varphi, \varphi \rangle||_{C^\ast G_M}
\end{equation*}
is a Hilbert $C^\ast G_M$-module.
\end{proof}

The same constructions can be done for $S^+$ and $S^-$ and the corresponding Hilbert $C^\ast G_M$-modules will be denoted by $\mathcal{E}_+$ and $\mathcal{E}_-$ respectively. Clearly, $\mathcal{E}=\mathcal{E}_+\oplus \mathcal{E}_-$.

\begin{proposition}
		The operators $D_{+}$ and $D_{-}$ are formal adjoint to each other. Namely for any $f\in C^\infty_c(G_M, r^\ast S^{+})$ and $g\in C^\infty_c(G_M, r^\ast S^{-})$, we have
		\[
		\langle D_+ f, g\rangle = \langle f, D_-g\rangle.
		\]
\end{proposition}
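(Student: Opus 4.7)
The plan is a direct computation: reduce the formal adjoint identity to an integration-by-parts formula on a single source fiber of $G_M$, viewed as a complete Riemannian spin manifold. Two ingredients make this work: right invariance of the Haar system, and the fact that right translation in $G_M$ is a leaf-preserving isometry between source fibers (each of which is a universal cover of a leaf).

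First, I would change the variable of integration in both inner products from $G_{M,s(\gamma)}$ to $G_{M,r(\gamma)}$. Setting $\eta=\gamma_1\circ\gamma^{-1}$ (so $\gamma_1=\eta\circ\gamma$) and applying item (3) of Definition~\ref{def-Haar-system} (with ``$\eta$'' there taken to be the present $\gamma$), I obtain
\begin{align*}
	\langle D_+ f, g\rangle(\gamma) &= \int_{G_{M,r(\gamma)}} \langle (D_+ f)(\eta),\, g(\eta\circ\gamma)\rangle\, d\mu_{r(\gamma)}(\eta),\\
	\langle f, D_- g\rangle(\gamma) &= \int_{G_{M,r(\gamma)}} \langle f(\eta),\, (D_- g)(\eta\circ\gamma)\rangle\, d\mu_{r(\gamma)}(\eta).
\end{align*}

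Next, I would argue that right translation $R_\gamma\colon G_{M,r(\gamma)}\to G_{M,s(\gamma)}$, $\eta\mapsto\eta\circ\gamma$, is a diffeomorphism satisfying $r\circ R_\gamma=r$, hence a leaf-preserving isometry between two universal covers of the same leaf (with the pulled-back metric and spin structure). Since $D_\pm$ on any source fiber $G_{M,x}$ is by construction the spinor Dirac operator associated to the pull-back along $r$, it commutes with the pull-back by $R_\gamma$. Writing $g_\gamma(\eta):=g(\eta\circ\gamma)$ for $\eta\in G_{M,r(\gamma)}$, this gives $(D_- g)(\eta\circ\gamma)=(D_- g_\gamma)(\eta)$; the section $g_\gamma$ is compactly supported on $G_{M,r(\gamma)}$ because $R_\gamma$ is a diffeomorphism.

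Finally, $G_{M,r(\gamma)}$ is a complete Riemannian spin manifold, and the restrictions of $D_\pm$ to it are the classical positive and negative Dirac operators, which are formally adjoint on compactly supported smooth sections. Applying the classical identity to the compactly supported pair $f|_{G_{M,r(\gamma)}}$ and $g_\gamma$ yields the pointwise equality $\langle D_+f,g\rangle(\gamma)=\langle f,D_-g\rangle(\gamma)$, and since $\gamma\in G_M$ was arbitrary the proposition follows. The only real subtlety is bookkeeping with source/range conventions and verifying that $R_\gamma$ intertwines the leafwise Dirac operators; once this is in place the proof is essentially immediate from the classical Green's identity, so no serious obstacle arises.
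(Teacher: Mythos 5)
Your argument is correct and is essentially the paper's own proof in mirror image: the paper moves $f$ to the fiber $G_{M,s(\gamma)}$ via the translation operator $U_{\gamma^{-1}}$ and then invokes the classical formal adjointness of $D_\pm$ on the universal cover of the leaf, whereas you move $g$ to $G_{M,r(\gamma)}$ via $R_\gamma$; the invariance of the Haar system, the translation-equivariance of the lifted Dirac operators, and the fiberwise Green's identity are exactly the same ingredients. Your writeup is merely more explicit about why $R_\gamma$ intertwines $D_\pm$, which the paper leaves implicit.
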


\begin{proof}
	Indeed,
	\begin{align*}
	\langle D_+f, g \rangle(\gamma) &= \int_{\gamma_1\in G_{M,s(\gamma)}} \langle D_+f(\gamma_1\circ \gamma^{-1}), g(\gamma_1) \rangle d\mu_{s(\gamma)}(\gamma_1) \\
	&= \int_{\gamma_1\in G_{M,s(\gamma)}} \langle \left(D_{+, \widetilde{L}_{r(\gamma)}} f\right)(\gamma_1\circ \gamma^{-1}), g(\gamma_1) \rangle d\mu_{s(\gamma)}(\gamma_1)\\
	&=\langle D_{+, \widetilde{L}_{s(\gamma)}}(U_{\gamma^{-1}}f), g \rangle = \langle U_{\gamma^{-1}}f, D_{-, \widetilde{L}_{s(\gamma)}}g \rangle = \langle f, D_-g\rangle(\gamma),
	\end{align*}
	where $U_\gamma$ is the translation operator $U_\gamma f(\gamma_1)=f(\gamma_1\circ \gamma)$. In the last line, the first two inner products are given by the $L^2$ inner product of the space $C_c^\infty(\widetilde{L}_{s(\gamma)}, \pi^\ast S)$. The first two terms in the last line are the same because $D_+$ is formal adjoint to $D_-$ on the universal cover of leaves.
\end{proof}

In the following discussion, we shall use $D_+, D_-$ for their closure. According to \cite[Prop~21, Lem~22]{Vassout06}, $D_+$ and $D_-$ can be taken as unbounded regular operators and $D_-^\ast =D_+$. So
\[
D=
\begin{bmatrix}
	0 & D_-\\
	D_+ & 0
\end{bmatrix}
:
\mathcal{E} \to \mathcal{E}
\]
is self-adjoint and regular. Since $D\pm iI$ is a first order elliptic operator, there is a smoothing operator $R$ and pseudodifferential operator of order negative one $Q$ such that 
\[
(D\pm iI)Q = I+R.
\]
Multiply $(D\pm iI)^{-1}$ on both sides, we get $(D\pm iI)^{-1}$ is compact. So the functional calculus $f(D)$ (see \cite{Kucerovsky02} for example) is compact for all $f\in C_0(\mathbb{R})$.

Recall that in \cite[Chp~10]{HigsonRoe00}, a continuous function $f:\mathbb{R}\to [-1,1]$ is called normalizing if 
\begin{itemize}
	\item 
	$f$ is odd;
	\item
	$f(c)\geq 0$ if $c\geq 0$;
	\item
	$\lim_{c\to \pm \infty} f(c) \to \pm 1$.
\end{itemize}

\begin{definition}\label{def-rosenberg-index}
The Rosenberg index of $D$ is an element $[\alpha]$ in $K_0(C^\ast G_M)$ which is given by the Kasparov module $(\mathcal{E},f(D))$ for any normalizing function $f$.
\end{definition}

\begin{proposition}
	If $F$ is spin and $(M,F)$ admits leafwise positive scalar curvature, then $[\alpha]=0$ as a $K$-theory element in $K_0(C^\ast G_M)$. 
\end{proposition}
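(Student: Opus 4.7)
The plan is to run the classical Lichnerowicz argument inside the Hilbert $C^\ast G_M$-module $\mathcal{E}$ and use this to show that $D$ can be normalized to an involution, hence that the Kasparov module $(\mathcal{E}, f(D))$ is degenerate.

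First I would recall the leafwise Lichnerowicz formula. Since $F$ is spin and $D$ is the leafwise Dirac operator, on each leaf $L$ (and thus on every universal cover $\widetilde{L}$, where the metric and spin structure lift isometrically) one has
\begin{equation*}
D_{\widetilde L}^2 \;=\; \nabla^\ast\nabla \;+\; \tfrac{1}{4}\kappa_{\widetilde L},
\end{equation*}
where $\kappa_{\widetilde L}$ is the scalar curvature of $\widetilde L$, which equals the pullback of the leafwise scalar curvature $\kappa_F$ on $M$. By compactness of $M$ and the positivity of $\kappa_F$, there is a constant $c>0$ with $\kappa_F \geq c$ everywhere, hence $\kappa_{\widetilde L}\geq c$ uniformly on every source fiber of $G_M$.

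Next I would transport this positivity to the Hilbert module level. For $\varphi\in C^\infty_c(G_M,r^\ast S)$ the inner product $\langle D^2\varphi,\varphi\rangle\in C^\ast G_M$ is assembled, fiberwise over the source map, from the $L^2$-inner products $\langle D_{\widetilde L}^2\varphi_{\widetilde L},\varphi_{\widetilde L}\rangle$ on the universal covers, where $\varphi_{\widetilde L}$ denotes the restriction of $\varphi$ to the source fiber $G_{M,s(\gamma)}\cong \widetilde L$. The pointwise Lichnerowicz estimate yields
\begin{equation*}
\langle D^2\varphi,\varphi\rangle \;\geq\; \tfrac{c}{4}\,\langle \varphi,\varphi\rangle
\end{equation*}
as elements of $C^\ast G_M$ (the difference is the positive element $\langle \nabla\varphi,\nabla\varphi\rangle$ together with the leafwise $\tfrac{1}{4}(\kappa_F - c)\langle\varphi,\varphi\rangle$, both nonnegative). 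Extending by continuity to the self-adjoint regular closure of $D^2$, this says that the spectrum of $D$, viewed as an unbounded regular operator on $\mathcal{E}$, is contained in $(-\infty,-\tfrac{\sqrt c}{2}]\cup [\tfrac{\sqrt c}{2},\infty)$.

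Finally I would exploit this spectral gap. Choose a normalizing function $f\colon \mathbb{R}\to [-1,1]$ that is odd and equal to $\operatorname{sgn}$ on the set $\{|x|\geq \tfrac{\sqrt c}{2}\}$; such an $f$ exists and is allowed in Definition~\ref{def-rosenberg-index}. By the functional calculus for unbounded regular operators, $f(D)$ is then a self-adjoint unitary on $\mathcal{E}$, in particular $f(D)^2 = 1$ and $[f(D),a]=0$ for the trivially graded $\mathbb{C}$-action, so the Kasparov module $(\mathcal{E},f(D))$ is degenerate in the sense of Kasparov. Hence $[\alpha]=0$ in $K_0(C^\ast G_M)$.

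The main obstacle is the second step: justifying that the leafwise Lichnerowicz estimate really does imply $D^2 \geq \tfrac{c}{4}$ as an inequality between $C^\ast G_M$-valued inner products, and that the resulting spectral gap for $D$ as an unbounded regular operator is genuine. This requires verifying that the fiberwise $L^2$ inequalities aggregate into a positive element of $C^\ast G_M$ (which follows because the inner product on $\mathcal{E}$ is built exactly from these fiberwise pairings via the Haar system) and then invoking the functional calculus of Kucerovsky/Vassout to conclude that $\operatorname{spec}(D)$ has the claimed gap.
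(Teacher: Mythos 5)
Your argument is correct and follows essentially the same route as the paper: leafwise positive scalar curvature plus the Lichnerowicz formula gives $D$ a spectral gap around $0$, one then picks a normalizing function $f$ with $f^2=1$ on the spectrum of $D$, and the Kasparov module $(\mathcal{E},f(D))$ becomes degenerate, so $[\alpha]=0$. The extra detail you supply (aggregating the fiberwise estimates into the inequality $\langle D^2\varphi,\varphi\rangle \geq \tfrac{c}{4}\langle\varphi,\varphi\rangle$ in $C^\ast G_M$) is exactly the substance behind the paper's brief assertion that $D$ is invertible, so there is no substantive difference in approach.
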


\begin{proof}
	By Lichnerowicz formula, if $(M,F)$ has leafwise positive scalar curvature, the leafwise Dirac $D$ is invertible. It has a spectrum gap around $0\in \mathbb{R}$. We can choose the normalizing function $f$ such that $f^2=1$ on spectrum of $D$. Under this circumstances, the Kasparov module $(\mathcal{E},f(D))$ is degenerated.
\end{proof}

\begin{remark}\label{remark-relation-between-rosenberg-index-and-longitudinal-index}
	In \cite{ConnesSkandalis84}, the authors define the longitudinal index as an element in $K_0(C^\ast_r G_H)$. Following their method, we set the Rosenberg index to live in $K_0(C^\ast G_M)$. In fact, under the map 
	\begin{equation}\label{eq-hom-from-monodromy-to-reduced-holonomy}
		C^\ast  G_M \to C^\ast G_H \to C^\ast_r G_H,
	\end{equation}
	the Rosenberg index defined above is mapped to the longitudinal index.
\end{remark}

\section{Twisted Rosenberg index}\label{sec-twisted-rosenberg-index}
In this section, we assume $B$ to be a $C^\ast$-algebra with unit. The theory of pseudodifferential operators over unital $C^\ast$-algebras can be found in \cite{MiscenkoFomenko79}. In \cite{MiscenkoFomenko79} the author define pseudodifferential operators over unital $C^\ast$-algebras for compact smooth manifolds, the method there also works for paracompact manifold. One can choose locally finite partition of unity in the formula (3.12) in \cite{MiscenkoFomenko79}.



Let $S^m(A^\ast G,B)$ be the set of all $a\in C^\infty(A^\ast G,B)$ such that for every compact subset $K\subset G^0$ and every multi-indices $\alpha, \beta$ there is constant $C_{\alpha, \beta, K}>0$ with the following inequality
\[
||\partial_x^\alpha \partial_\xi^\beta a(x,\xi)||_B \leq C_{\alpha, \beta, K}\cdot (1+|\xi|)^{m-|\beta|},
\]
for all $x\in K$. Let $S^m_{\operatorname{phg}}(A^\ast G,B)$ be the set of all $a\in S^m(A^\ast G,B)$ such that for every $j\in \mathbb{N}$ one can find $a_{m-j}\in C^\infty(A^\ast G,B)$ with the property $a_{m-j}(x,t\xi) = t^{m-j} a(x,\xi)$ for all $t>0$, $||\xi||\geq 1$ and 
\[
a-\sum_{j=0}^{N-1} a_{m-j} \in S^{m-N}(A^\ast G,B),
\]
for all $N\in \mathbb{N}$. 

\begin{definition}
	A pseudodifferential operator of order $m$ on Lie groupoid $G$ with values in $B$ is a compactly supported $G$-operator $\{P_x\}_{x\in G^{(0)}}$ in the sense of \cite[sec~3.3]{Vassout06} such that 
	\begin{itemize}
		\item 
		each $P_x$ is a pseudodifferential operator on source fiber $s^{-1}(x)$ of order $m$ over a $C^\ast$-algebra $B$;
		\item
		for each trivializing open subset $U\times V \cong \Omega\subset G$ to which the source map restricts to the projection onto the first factor, and for all $\phi, \psi \in C_c(\Omega)$ the operator $\phi P_x \psi$ is given by a symbol $a(x,y,\xi)\in S^m_{\operatorname{phg}}(U\times V\times \mathbb{R}^n, B)$.
	\end{itemize}
If in addition, the distributional kernel of $\{P_x\}$ is compactly supported, the pseudodifferential operator is called compactly supported. The principal symbol $\sigma_P\in S^\ast_{\operatorname{phg}}(A^\ast G, B)$ of a pseudodifferential operator $P$ is defined by
\[
\sigma_P(x,\xi)= \sigma(P_x)(x,\xi),
\]
where $\sigma(P_x)\in S^\ast(T^\ast G_x, B)$ is the principal symbol of $P_x$ as pseudodifferential operator on the source fiber $G_x$. From the definition, it is clear that if $P,Q$ are compactly supported pseudodifferential operators, then $PQ$ is still a pseudodifferential operator and $\sigma_{PQ}=\sigma_P \cdot \sigma_Q$.
\end{definition}

\begin{proposition}
	Pseudodifferential operators on $G$ with compact support of order less than or equal to zero extend to morphisms between $C^\ast (G,B)$ and pseudodifferential operators with compact support of order strictly less than zero extend to elements of $C^\ast(G,B)$.
\end{proposition}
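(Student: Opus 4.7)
The plan is to adapt the standard argument of Vassout \cite{Vassout06} from the scalar case to the setting with coefficients in the unital $C^\ast$-algebra $B$, treating the strictly-negative-order assertion first and then deriving the order $\leq 0$ boundedness from it via H\"ormander's square-root trick.

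For a compactly supported pseudodifferential operator $P$ of order $m<0$, the first step is to show that its Schwartz kernel $k_P$ lies in $C_c(G,B)\subset C^\ast(G,B)$. When $m<-n$, with $n$ the dimension of the source fibres, the oscillatory integral expressing $k_P$ in a trivialising chart converges absolutely by the symbol estimates and produces a continuous compactly supported $B$-valued function. For general $m<0$, I would use the asymptotic expansion $a\sim \sum_j a_{m-j}$ of the classical symbol and truncate at $N$ with $m-N<-n$: the tail yields a kernel in $C_c(G,B)$ by the case just treated, while each finite homogeneous piece contributes a compactly supported $B$-valued distribution which is smooth off the diagonal and whose singularity on the diagonal is controlled by its homogeneity degree. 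Assembling the pieces and taking the $I$-norm limit realises $P$ as an element of $C^\ast(G,B)$.

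For the order $\leq 0$ case I would apply H\"ormander's square-root trick. Given a compactly supported $P$ of order $0$ with principal symbol $\sigma_P\in S^0_{\mathrm{phg}}(A^\ast G,B)$, fix $c>0$ exceeding the supremum of $\|\sigma_P(x,\xi)\|_B$ on the cosphere bundle restricted to the compact projection of the kernel support. Then $c^2-\sigma_P^\ast\sigma_P$ is a fibrewise strictly positive element of $B$ depending smoothly on the base variables, and by unitality of $B$ together with the Mishchenko--Fomenko holomorphic functional calculus \cite{MiscenkoFomenko79} it admits a $C^\infty$ square-root $b\in S^0_{\mathrm{phg}}(A^\ast G,B)$. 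Picking a pseudodifferential operator $Q$ with principal symbol $b$, the compactly supported operator $R=c^2 I-P^\ast P-Q^\ast Q$ has vanishing principal symbol, is therefore of order $-1$, and hence lies in $C^\ast(G,B)$ by the previous paragraph. The identity $P^\ast P=c^2 I-Q^\ast Q-R$ combined with $Q^\ast Q\geq 0$ and $-R\leq \|R\|_{C^\ast(G,B)}\cdot I$ gives
\[
P^\ast P \leq \bigl(c^2+\|R\|_{C^\ast(G,B)}\bigr)\cdot I
\]
in the multiplier algebra of $C^\ast(G,B)$, so convolution by $P$ extends to a bounded adjointable morphism of $C^\ast(G,B)$ of operator norm at most $\sqrt{c^2+\|R\|_{C^\ast(G,B)}}$; the same reasoning applied to $P^\ast$ furnishes the adjoint.

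The main technical obstacle is verifying smoothness of the $B$-valued square-root: one must check that the pointwise square-root of a strictly positive smooth family of elements of a unital $C^\ast$-algebra remains $C^\infty$ in the base parameters and satisfies the symbol estimates defining $S^0_{\mathrm{phg}}(A^\ast G,B)$. This is precisely where unitality of $B$ is essential; it is handled by representing the square-root as a contour integral via the holomorphic functional calculus and differentiating under the integral sign, as in \cite{MiscenkoFomenko79}. Once this is settled, the rest of the argument is a direct transcription of the scalar-coefficient proof in \cite{Vassout06}.
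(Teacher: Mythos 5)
Your proposal is correct in substance, but the first half follows a genuinely different route from the paper. For operators of strictly negative order you argue directly on the Schwartz kernel: absolute convergence when the order is below $-n$, and for general $m<0$ a polyhomogeneous expansion plus the standard singularity bound $\|k(\gamma)\|_B\lesssim d(\gamma,G^{(0)})^{-(n+m-j)}$, giving local integrability and hence an $I$-norm approximation by elements of $C_c(G,B)$; since $\|\cdot\|_{C^\ast(G,B)}\leq\|\cdot\|_I$ this realises $P$ in $C^\ast(G,B)$. This is the classical Connes-style argument and it works, though you leave the kernel estimate and the uniformity of the $I$-norm truncation over the (compact) support implicit. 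The paper instead avoids all analysis of singular kernels: it notes that an operator of order $\leq p=\dim G^{(0)}-\dim G$ already has a smooth compactly supported kernel, and then iterates the $C^\ast$-identity $\|Pf\|^2\leq\|P^\ast P\|\,\|f\|^2$ --- if $P$ has order $\leq p/2^k$ then $P^\ast P$ has order $\leq p/2^{k-1}$, so by induction $P$ is a multiplier with $P^\ast P\in C^\ast(G,B)$, whence $P\in C^\ast(G,B)$. The paper's squaring trick is shorter and needs no polyhomogeneous expansion or diagonal estimates; your route is more explicit but requires checking the kernel bounds in the $B$-valued setting. For the order-zero statement the two arguments coincide: both are H\"ormander's square-root trick, with the operator $Q$ of symbol $(c^2+1-\sigma_P^\ast\sigma_P)^{1/2}$ (you use $c^2-\sigma_P^\ast\sigma_P$, a cosmetic difference) and the quadratic-form estimate $\langle P^\ast Pf,f\rangle\leq\langle(P^\ast P+Q^\ast Q)f,f\rangle$; your additional remark that smoothness of the operator-valued square root follows from the holomorphic functional calculus and unitality of $B$ fills in a point the paper leaves implicit, and is a welcome addition.
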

\begin{proof}
	(see \cite[Proposition~3.4]{SkandalisDebord2019lie})
	We first assume that the pseudodifferential operator $P$ has order less than or equal to $p=\dim G^{(0)} -\dim G$. Then for any trivializing open subset $U\times V \cong \Omega\subset G$ and any $\phi, \psi \in C_c(\Omega)$ the operator $\phi P_x \psi$ has smooth integral kernel. Therefore, $P$ has compactly supported smooth kernel which clearly extends to an element of $C^\ast(G,B)$.
	
	If $P$ has order $\leq p/2$, then $||Pf||^2_{C^\ast (G,B)}\leq ||\langle Pf, Pf\rangle||_{C^\ast (G,B)} \leq ||P^\ast P||\cdot ||f||^2_{C^\ast (G,B)}$ which implies that  $P$ is a multiplier of $C^\ast (G,B)$. Since $P^\ast P$ extends to an element of $C^\ast (G,B)$, it follows that $P\in C^\ast (G,B)$. By induction, if $P$ has order $\leq p/2^k$ for some integer $k$, then $P\in C^\ast (G,B)$. This proves compactly supported pseudodifferential operators of negative order extend to an element of $C^\ast (G,B)$.
	
	Now assume that $P$ is of order $0$ with principal symbol $\sigma_P\in S^0_{\operatorname{phg}}(A^\ast G, B)$. Let $c\in \mathbb{R}_+$ such that $c> \sigma_p(x,\xi)$ for all $(x,\xi)\in A^\ast G$. Put $b(x,\xi)=(c^2+1-|\sigma_p(x,\xi)|^2)^{1/2}$ and let $Q$ be pseudodifferential operator with principal symbol $b(x,\xi)$. Then $P^\ast P+Q^\ast Q$ has principal symbol $1+c^2$ and is bounded. A direct calculation 
	\[
	||Pf||^2_{C^\ast (G,B)}\leq ||\langle Pf, Pf\rangle||_{C^\ast (G,B)} \leq ||\langle (P^\ast P+ Q^\ast Q)f, f\rangle||_{C^\ast (G,B)}
	\]
	shows that $P$ is bounded. This completes the proof.
\end{proof}

Given $a\in S^m_{\operatorname{phg}}(A^\ast G, B)$, a pseudodifferential operator $P_a: C^\infty_c(G, B) \to C^\infty_c(G,B)$ can be defined by the formula in \cite[Prop~14]{Vassout06}. Namely, we fix a diffeomorphism $\phi$ from a tubular neighborhood of $G^{(0)}\subset G$ to an open neighborhood $W$ of the zero section of $AG$. Let $\chi$ be a function with values in $[0,1]$, whose restriction to $G^{(0)}$ equals $1$ and its support is contained in $W$. Let $\xi \in A^\ast G, \gamma\in G$, and $e_\xi(\gamma)=\chi(\gamma) \exp(i\langle \phi(\gamma), \xi\rangle)$, then $P_a$ is given by the distributional kernel 
\[
k(\gamma) = \frac{1}{(2\pi)^n} \int_{A^\ast_{r(\gamma)} G} e_{-\xi}(\gamma^{-1}) a(r(\gamma), \xi) d\xi.
\]
Then $P_a$ is a pseudodifferential operator on $G$ of order $m$ whose principal symbol is $a$.

\begin{remark}
	The above discussion also applies to pseudodifferential operators between finitely generated projective Hilbert $B$-module bundles.
\end{remark}

Let $G^{(0)}$ be compact. A pseudodifferential operator is called elliptic if its principal symbol $a\in S^m_{\operatorname{phg}}(A^\ast G, B)$ is invertible outside a compact neighborhood of the zero section $G^{(0)}\subset A^\ast G$. Then there is $a^\prime \in S^{-m}_{\operatorname{phg}}(A^\ast G, B)$ which agrees with the inverse of $a$ outside a compact neighborhood of $G^{(0)}\subset A^\ast G$.

Now, let $E$ be a finitely generated projective Hilbert $B$-module bundle over $M$.  The space $C^\infty_c(G_M, r^\ast E)$ can be completed into a Hilbert $C^\ast (G_M, B)$-module in the same way as Proposition~\ref{prop-complete-section-spinor-module}. Notice that $S^+\otimes E$ ($S^-\otimes E, S\otimes E$ respectively) is still a finitely generated projective Hilbert $B$-module bundle over $M$, the corresponding Hilbert module will be denoted by $\mathcal{E}_{+,B}$ ($\mathcal{E}_{-,B}, \mathcal{E}_{B}$ respectively). Notice that $\mathcal{E}_B=\mathcal{E}_{+,B}\oplus \mathcal{E}_{-,B}$. Let $D_{+, E}: C^\infty_c(G_M, r^\ast S^{+}\otimes r^\ast E)\to C^\infty_c(G_M, r^\ast S^{-}\otimes r^\ast E)$ denote the leafwise Dirac type operator twisted by $E$ which is a first order elliptic differential operator. The operator $D_{+,E}$ can be taken as an unbounded operator from $\mathcal{E}_{+,B}$ to $\mathcal{E}_{-,B}$. We shall use the same notation for its closure.

\begin{proposition}
	The operator $D_{+,E}: \mathcal{E}_{+,B} \to \mathcal{E}_{-,B}$ is regular and $D_{+, E}^\ast = D_{-,E}$.
\end{proposition}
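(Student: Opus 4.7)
The plan is to follow the pattern of the untwisted case in Section~\ref{sec-rosenberg-index}, where $D = D_+ \oplus D_-$ was shown to be self-adjoint and regular via Vassout's [Prop~21, Lem~22], but now using the pseudodifferential calculus with coefficients in the unital $C^\ast$-algebra $B$ that was just set up. I would assemble the twisted operators into
\[
D_E = \begin{bmatrix} 0 & D_{-,E} \\ D_{+,E} & 0 \end{bmatrix}:\mathcal{E}_B \longrightarrow \mathcal{E}_B,
\]
and show that $D_E$ is self-adjoint and regular on the Hilbert $C^\ast(G_M,B)$-module $\mathcal{E}_B$. The off-diagonal form then immediately yields regularity of $D_{+,E}$ together with $D_{+,E}^\ast = D_{-,E}$.

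First I would verify symmetry of $D_E$ on the common core $C^\infty_c(G_M, r^\ast(S\otimes E))$. For $f\in C^\infty_c(G_M, r^\ast(S^+\otimes E))$ and $g\in C^\infty_c(G_M, r^\ast(S^-\otimes E))$, the leafwise lift-and-translate calculation used in the proof of formal adjointness for the untwisted $D$ carries over essentially verbatim, since the twisting bundle $E$ contributes a pointwise $B$-valued pairing that is preserved under the translation operators $U_\gamma$ along universal covers of leaves. This gives $\langle D_{+,E} f, g\rangle = \langle f, D_{-,E} g\rangle$ as equality in $C^\ast(G_M, B)$.

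The central analytic input is that $D_E \pm iI$ admits a parametrix in the $B$-valued calculus whose remainder extends to an element of $C^\ast(G_M, B)$. The principal symbol of $D_E$ is Clifford multiplication on $S$ tensored with the identity on $E$, an isomorphism of finitely generated projective Hilbert $B$-module bundles on $A^\ast G_M \setminus G_M^{(0)}$; hence $D_E \pm iI$ is elliptic in the sense of this section. The parametrix construction produces $Q\in S^{-1}_{\operatorname{phg}}(A^\ast G_M, \operatorname{End}(S\otimes E))$ with $(D_E \pm iI)Q = I + R$, where $R$ is a compactly supported pseudodifferential operator of strictly negative order and therefore extends to $C^\ast(G_M, B)$ by the preceding proposition. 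Combined with the symmetry established above, the standard criterion for a symmetric unbounded operator on a Hilbert $C^\ast$-module whose $\pm i$-shifts have dense range to be self-adjoint and regular (which is precisely the argument of Vassout~[Prop~21, Lem~22], depending only on the existence of such a parametrix) delivers self-adjointness and regularity of $D_E$.

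The step I expect to require the most care is checking that Vassout's arguments, stated for the scalar coefficient case, transfer intact to the Hilbert $C^\ast(G_M, B)$-module setting. Because $B$ is unital and $E$ is finitely generated projective, the bundle $S\otimes E$ is locally a direct summand of a trivial Hilbert $B^n$-bundle, and the mapping properties and compactness results developed in this section are precisely the $B$-valued analogues of the scalar ones; none of the estimates rely on commutativity of the coefficients, so the adaptation is essentially formal. This is, however, the point at which the assumption that $B$ is unital is genuinely used, both in defining the pseudodifferential calculus and in extracting the compact remainder $R$ from the parametrix.
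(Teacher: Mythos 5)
Your proposal is correct and follows essentially the same route as the paper: ellipticity of the twisted operator in the $B$-coefficient pseudodifferential calculus (using that $B$ is unital and $E$ is finitely generated projective), a parametrix with remainder of negative order extending into $C^\ast(G_M,B)$, and the observation that Vassout's argument [Prop~21, Lem~22] transfers verbatim to this setting. The only difference is cosmetic: the paper applies the parametrix argument directly to $D_{+,E}$ (two-sided parametrix with smoothing remainders) and assembles $D_E$ afterwards, whereas you first establish self-adjointness and regularity of the assembled $D_E$ via its $\pm i$-shifts and then read off the statement for $D_{+,E}$ from the off-diagonal form.
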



\begin{proof}
	Since $D_{+,E}$ is elliptic, there is a pseudodifferential operator  $Q$ of order $-1$ such that $D_{+,E}Q-I=R$ and $QD_{+,E}-I=S$ are smoothing operators. Then the proof in \cite[Prop~21]{Vassout06} works verbatim.
\end{proof}

Let $D_E=\begin{bmatrix}
	0 & D_{-,E}\\
	D_{+,E} & 0
\end{bmatrix}
: \mathcal{E}_B \to \mathcal{E}_B
$. It is a self-adjoint regular operator. It can be checked that the operator $(D_E\pm i)^{-1}: \mathcal{E}_B \to \mathcal{E}_B$ is compact.

\begin{proposition}
	Let $f$ be a normalizing function, then the pair $\left(\mathcal{E}_B, f(D_E)\right)$ forms a Kasparov module and determines an element in $K_0(C^\ast (G_M, B))$. This element will be called twisted Rosenberg index and denoted by $[D_E]$.
\end{proposition}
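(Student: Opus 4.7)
The plan is to verify directly the axioms of a $(\mathbb{C}, C^\ast(G_M,B))$-Kasparov cycle. First, $\mathcal{E}_B = \mathcal{E}_{+,B}\oplus \mathcal{E}_{-,B}$ carries the spinor $\mathbb{Z}/2$-grading, and $D_E$ is odd with respect to it since it interchanges the two summands. Because $D_E$ is self-adjoint and regular, bounded Borel functional calculus gives $f(D_E)\in \mathcal{B}(\mathcal{E}_B)$; it is self-adjoint because $f$ is real-valued, and odd because if $\gamma$ denotes the grading operator then $\gamma f(D_E)\gamma^{-1}=f(\gamma D_E\gamma^{-1})=f(-D_E)=-f(D_E)$, using that $f$ is odd.

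The main step is to show that $f(D_E)^2-I$ lies in the compact operators $\mathcal{K}(\mathcal{E}_B)$. Since $f$ is normalizing, the function $c\mapsto f(c)^2-1$ belongs to $C_0(\mathbb{R})$, so it suffices to prove that $g(D_E)\in \mathcal{K}(\mathcal{E}_B)$ for every $g\in C_0(\mathbb{R})$. The resolvent functions $(c\pm i)^{-1}$ generate $C_0(\mathbb{R})$ as a $C^\ast$-algebra by Stone--Weierstrass, and the preceding paragraph of the text records that $(D_E\pm iI)^{-1}\in \mathcal{K}(\mathcal{E}_B)$. Since $\mathcal{K}(\mathcal{E}_B)$ is a closed ideal in $\mathcal{B}(\mathcal{E}_B)$, approximating a given $g$ by polynomials in $(c\pm i)^{-1}$ and passing to the operator functional calculus yields $g(D_E)\in \mathcal{K}(\mathcal{E}_B)$, as desired.

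For well-definedness of the class in $K_0(C^\ast(G_M,B))$, I would argue independence of the normalizing function $f$. Given two normalizing functions $f_0,f_1$, their difference lies in $C_0(\mathbb{R})$, so $f_1(D_E)-f_0(D_E)\in \mathcal{K}(\mathcal{E}_B)$ by the compactness result just established. The straight-line path $t\mapsto tf_1(D_E)+(1-t)f_0(D_E)$ is an operator homotopy of self-adjoint odd Fredholm operators on $\mathcal{E}_B$, connecting the two Kasparov cycles and showing they represent the same element of $K_0(C^\ast(G_M,B))$.

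The substantive analytic work has already been carried out earlier: regularity and self-adjointness of $D_E$, and the compactness of $(D_E\pm iI)^{-1}$, both obtained via the parametrix construction for elliptic pseudodifferential operators with coefficients in the unital $C^\ast$-algebra $B$. Consequently the only remaining obstacle is the transfer of the Stone--Weierstrass density from $C_0(\mathbb{R})$ to operators on $\mathcal{E}_B$, which is standard; the rest of the argument is bookkeeping on the bounded transform of an unbounded Kasparov cycle.
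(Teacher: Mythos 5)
Your argument is essentially the paper's own proof: the key point in both is that $C_0(\mathbb{R})$ is generated as a $C^\ast$-algebra by $(x\pm i)^{-1}$, so the previously established compactness of $(D_E\pm iI)^{-1}$ gives $g(D_E)\in\mathcal{K}(\mathcal{E}_B)$ for all $g\in C_0(\mathbb{R})$, hence $f(D_E)^2-I$ is compact. The additional bookkeeping you supply (grading, oddness, self-adjointness of $f(D_E)$, and independence of the normalizing function via a straight-line operator homotopy) is correct and standard, just left implicit in the paper.
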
 

\begin{proof}
	It suffices to show that if $g$ vanishes at infinity, $g(D_E): \mathcal{E}_B\to \mathcal{E}_B$ is a compact operator. The result follows from the fact that $C_0(\mathbb{R})$ is generated by $(x\pm i)^{-1}$ as $C^\ast$-algebra.
\end{proof}

\section{The Hilbert module out of leafwise flat bundles}\label{sec-hilbert-module}

The basic theory of Hilbert $C^\ast$-module and Hilbert $C^\ast$-module bundle can be found in \cite{Schick05}. Let $B$ be an unital $C^\ast$-algebra, let $W$ be a leafwise flat, finitely generated projective Hilbert $B$-module bundle over $M$.

The space $C^\infty_c(G_M,r^\ast W)$ has a $C^\infty_c(G_M, B)\subset C^\ast (G_M, B)$-valued inner product given by
\begin{equation}\label{eq-action-of-function-on-section}
	\langle \varphi, \psi \rangle (\gamma)= \int_{\gamma_1\in G_{M,s(\gamma)}} \langle \varphi(\gamma_1\circ \gamma^{-1}), \psi(\gamma_1) \rangle d\mu_{s(\gamma)}(\gamma_1),
\end{equation}
where $\varphi,\psi\in C^\infty_c(G_M, r^\ast W)$.
Let $\mathcal{E}_W$ be the completion of $C^\infty_c(G_M,r^\ast W)$  under the norm $||\varphi||_{\mathcal{E}_W} = ||\langle \varphi, \varphi \rangle||_{C^\ast(G_M,B)}^{1/2}$. The space $C^\infty_c(G_M,r^\ast W)$ has an obvious right $C^\infty_c(G_M,B)$ action which is given by
\begin{equation}\label{eq-action-compactly-pre-hilbert}
	\varphi.f(\gamma) = \int_{\gamma_1\in G_{M,s(\gamma)}} \varphi(\gamma\circ \gamma_1^{-1}) f(\gamma_1) d\mu_{s(\gamma)}(\gamma_1),
\end{equation}
where $\varphi\in C^\infty_c(G_M,r^\ast W)$ and $f\in C^\infty_c(G_M, B)$. 
The action \eqref{eq-action-compactly-pre-hilbert} extends to a right $C^\ast (G_M, B)$ action on $\mathcal{E}_W$. As a consequence,  $\mathcal{E}_W$ has a Hilbert $C^\ast (G_M, B)$-module structure.

There is a left $C_0(M)$-action on  $C^\infty_c(G_M, r^\ast W)$ which is given by
\begin{equation}\label{eq-action-left-bdd-function-on-sections}
	h.\varphi (\gamma) = h(r(\gamma))\cdot \varphi(\gamma),
\end{equation}
for all $h\in C_0(M)$ and $\varphi\in C^\infty_c(G_M, r^\ast W)$.

\begin{proposition}
	For any $h\in C_0(M)$ and $\varphi\in C^\infty_c(G_M, r^\ast W)$, we have
	\begin{equation*}
		||h.\varphi|| \leq ||h||\cdot ||\varphi||_{\mathcal{E}_W},
	\end{equation*}
where $||h||$ is the sup-norm of $h$ in $C_0(M)$.
\end{proposition}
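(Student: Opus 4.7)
The plan is to establish the stronger Hilbert-module inequality $\langle h.\varphi, h.\varphi \rangle \leq ||h||^2 \langle \varphi, \varphi \rangle$ as an inequality of positive elements in $C^\ast(G_M, B)$; the desired norm bound then follows at once by applying the $C^\ast$-norm and invoking the definition $||\varphi||_{\mathcal{E}_W}^2 = ||\langle \varphi, \varphi\rangle||_{C^\ast(G_M, B)}$.

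The first step is to unfold $\langle h.\varphi, h.\varphi \rangle(\gamma)$ using \eqref{eq-action-of-function-on-section} and \eqref{eq-action-left-bdd-function-on-sections}. The crucial bookkeeping observation is that for $\gamma_1 \in G_{M, s(\gamma)}$ the composition $\gamma_1 \circ \gamma^{-1}$ has range $r(\gamma_1 \circ \gamma^{-1}) = r(\gamma_1)$, so both $h$-factors are evaluated at the same point of $M$. Pulling these complex scalars out of the fiberwise $B$-valued inner product on $W$ yields
\[
\langle h.\varphi, h.\varphi \rangle(\gamma) = \int_{G_{M,s(\gamma)}} |h(r(\gamma_1))|^2 \, \langle \varphi(\gamma_1 \circ \gamma^{-1}), \varphi(\gamma_1) \rangle \, d\mu_{s(\gamma)}(\gamma_1).
\]

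Next I would introduce the auxiliary function $k := \sqrt{||h||^2 - |h|^2} \in C_b(M)$, which is real, bounded, and nonnegative; the action formula \eqref{eq-action-left-bdd-function-on-sections} extends pointwise to $k$ and still produces a compactly supported smooth section since $\varphi$ already has compact support. Repeating the computation above for $k$ and adding the two identities gives
\begin{equation*}
\langle h.\varphi, h.\varphi \rangle + \langle k.\varphi, k.\varphi \rangle = ||h||^2 \cdot \langle \varphi, \varphi \rangle
\end{equation*}
as elements of $C^\infty_c(G_M, B) \subset C^\ast(G_M, B)$. Since $\langle k.\varphi, k.\varphi \rangle$ is a positive element of $C^\ast(G_M, B)$ by the general positivity of pre-Hilbert $C^\ast$-module inner products, the desired operator inequality $\langle h.\varphi, h.\varphi \rangle \leq ||h||^2 \langle \varphi, \varphi \rangle$ follows, and taking $C^\ast$-norms of both sides completes the proof. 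I do not foresee a genuine obstacle here; the only non-automatic ingredient is the groupoid identity $r(\gamma_1 \circ \gamma^{-1}) = r(\gamma_1)$, which is immediate from the definitions of source and range, and the fact that $k$ lies in $C_b(M)$ rather than $C_0(M)$ causes no problem because the action is purely pointwise and is only being applied to sections in $C^\infty_c(G_M, r^\ast W)$ before passing to the completion $\mathcal{E}_W$.
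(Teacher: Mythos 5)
Your proof is correct and is essentially the paper's own argument: the paper likewise introduces $k=(\|h\|^2-|h|^2)^{1/2}$ (following Renault's Lemma~1.1.13), uses the identity $\langle h.\varphi,h.\varphi\rangle=\|h\|^2\langle\varphi,\varphi\rangle-\langle k.\varphi,k.\varphi\rangle$, and concludes by positivity of the module inner product. Your extra remark about $k\in C_b(M)$ versus $C_0(M)$ is moot here since $M$ is compact, and your explicit verification that $r(\gamma_1\circ\gamma^{-1})=r(\gamma_1)$ just spells out what the paper leaves implicit.
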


\begin{proof}
	We shall use the estimate in \cite[Lem~1.1.13]{Renault80}.
	Let $k\in C_0(M)$ be the function defined by
	\begin{equation*}
		k(m) = \left(||h||^2-|h(m)|^2 \right)^{1/2}.
	\end{equation*}
Then, it is easy to check the following
	\begin{align*}
		||h.\varphi||^2 &= ||\langle h.\varphi, h.\varphi \rangle|| \\
		&=\left|\left| ||h||^2\langle \varphi, \varphi\rangle - \langle k.\varphi, k.\varphi \rangle \right|\right|  \\
		&\leq ||h||^2 ||\langle \varphi, \varphi \rangle ||,
	\end{align*}
which completes the proof.
\end{proof}

\begin{corollary}(\cite[Proposition~2.1.14]{Renault80})\label{coro-action-base-on-groupoid}
	The action \eqref{eq-action-left-bdd-function-on-sections} extends to a $\ast$-homomorphism $C_0(M) \to \mathcal{L}(\mathcal{E}_W)$.
\end{corollary}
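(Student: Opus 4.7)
The plan is to use the preceding proposition's norm estimate to extend the action by continuity, and then verify the adjointability and $\ast$-algebra identities on the dense subspace $C^\infty_c(G_M,r^\ast W)$, where everything reduces to a pointwise computation.

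First, since the previous proposition gives $\|h.\varphi\|_{\mathcal{E}_W}\leq \|h\|\cdot \|\varphi\|_{\mathcal{E}_W}$ on the dense subspace $C^\infty_c(G_M,r^\ast W)\subset \mathcal{E}_W$, the assignment $\varphi\mapsto h.\varphi$ extends uniquely to a bounded $\C$-linear operator $\pi(h):\mathcal{E}_W\to \mathcal{E}_W$ with $\|\pi(h)\|\leq \|h\|$. Right-$C^\ast(G_M,B)$-linearity of $\pi(h)$ on $\mathcal{E}_W$ follows from the identity $h.(\varphi.f)=(h.\varphi).f$ for $\varphi\in C^\infty_c(G_M,r^\ast W)$ and $f\in C^\infty_c(G_M,B)$, which is immediate from the defining formula \eqref{eq-action-compactly-pre-hilbert} together with \eqref{eq-action-left-bdd-function-on-sections}, since multiplication by $h(r(\gamma))$ can be pulled out of the convolution integral. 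By continuity, this right-linearity extends to the completion.

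The key step is the adjointability of $\pi(h)$. For $\varphi,\psi\in C^\infty_c(G_M,r^\ast W)$ and $\gamma\in G_M$ I would compute both sides of the proposed adjoint identity $\langle \pi(h)\varphi,\psi\rangle=\langle \varphi,\pi(\bar h)\psi\rangle$ directly from \eqref{eq-action-of-function-on-section}. On the left,
\begin{equation*}
\langle h.\varphi,\psi\rangle(\gamma)=\int_{\gamma_1\in G_{M,s(\gamma)}} \overline{h(r(\gamma_1\circ \gamma^{-1}))}\,\langle \varphi(\gamma_1\circ \gamma^{-1}),\psi(\gamma_1)\rangle\, d\mu_{s(\gamma)}(\gamma_1),
\end{equation*}
where the conjugate appears because the fiberwise $B$-valued inner product is conjugate-linear in the first variable with respect to scalars from $C_0(M)$. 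On the right,
\begin{equation*}
\langle \varphi,\bar h.\psi\rangle(\gamma)=\int_{\gamma_1\in G_{M,s(\gamma)}} \bar h(r(\gamma_1))\,\langle \varphi(\gamma_1\circ \gamma^{-1}),\psi(\gamma_1)\rangle\, d\mu_{s(\gamma)}(\gamma_1).
\end{equation*}
The two integrands coincide because for any composable pair one has $r(\gamma_1\circ \gamma^{-1})=r(\gamma_1)$, so the scalar factors agree. This shows $\pi(h)^\ast=\pi(\bar h)$ on the dense subspace, hence $\pi(h)\in \mathcal{L}(\mathcal{E}_W)$ after passing to the closure.

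Finally, multiplicativity $\pi(hk)=\pi(h)\pi(k)$ is immediate since $(hk).\varphi(\gamma)=h(r(\gamma))k(r(\gamma))\varphi(\gamma)=h.(k.\varphi)(\gamma)$ pointwise, and this identity extends by continuity to $\mathcal{E}_W$. Combined with the adjoint relation just verified and the obvious linearity in $h$, this shows that $\pi:C_0(M)\to \mathcal{L}(\mathcal{E}_W)$ is a $\ast$-homomorphism. I do not expect any serious obstacle here: the only subtle point is the bookkeeping behind the equality $r(\gamma_1\circ \gamma^{-1})=r(\gamma_1)$, which makes the adjoint computation come out cleanly and reflects the fact that the $C_0(M)$-action is essentially by pullback under the range map.
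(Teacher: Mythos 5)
Your proposal is correct and follows the same route as the paper: extend the action by continuity using the norm bound $\|h.\varphi\|\leq\|h\|\,\|\varphi\|$ from the preceding proposition, then verify $\pi(h)^\ast=\pi(\bar h)$ (and multiplicativity) by a direct computation with the inner product \eqref{eq-action-of-function-on-section}, which the paper leaves as ``a matter of direct calculation.'' Your bookkeeping identity $r(\gamma_1\circ\gamma^{-1})=r(\gamma_1)$ and the resulting adjoint computation are exactly the details the paper omits, so the argument is complete.
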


\begin{proof}
	Thanks to the above proposition, the action extends to the Hilbert module $\mathcal{E}_W$. It is a matter of direct calculation to check that it preserves the $\ast$-operation. 
\end{proof}

We shall show, in the rest of this section, the Hilbert module $\mathcal{E}_W$ determines a $KK$-theory element in $KK(C^\ast G_M, C^\ast( G_M, B))$. There is a left $C^\infty_c(G_M)$ action on $C^\infty_c(G_M, r^\ast W)$ which is given by
\begin{equation}\label{eq-action-groupoid-on-hilbert-module}
	f.\varphi(\gamma) = \int_{\gamma_1\in G_{M, s(\gamma)}} f(\gamma\circ \gamma_1^{-1}) (\gamma\circ \gamma_1^{-1}).\varphi(\gamma_1) d\mu_{s(\gamma)}(\gamma_1),
\end{equation}
where $(\gamma\circ\gamma_1^{-1}).\varphi(\gamma_1)$ is the image of $\varphi(\gamma_1)$ under the parallel translation along the curve  $\gamma \circ \gamma_1^{-1}$. Thanks to the leafwise flatness of $W$, this parallel translation is well defined. It is also convenient to have an alternative description of the action \eqref{eq-action-groupoid-on-hilbert-module}. 

Let $\mu=\{\mu_x\}$ be a right invariant Haar system on $G_M$. The inverse map $\iota: G_M\to G_M$ induces a left invariant Haar system which we denote by $\widetilde{\mu}$. The space $C^\infty_c(G_M)$ can be completed into Hilbert $C_0(M)$-modules in two ways given by two inner products
\begin{equation*}
	\langle f,g \rangle_s(m) = \int_{s(\gamma)=m} \overline{f(\gamma)}\cdot g(\gamma) d{\mu}(\gamma) 
\end{equation*}
and
\begin{equation*}
	\langle f,g \rangle_r(m) = \int_{r(\gamma)=m} \overline{f(\gamma)}\cdot g(\gamma) d\widetilde{\mu}(\gamma),
\end{equation*}
where $f,g \in C^\infty_c(G_M)$. Following \cite{BussHolkarMeyer18}, we shall denote the completions by $L^2(G_M, s,\mu)$ and $L^2(G_M,r,\widetilde{\mu})$ respectively. According to Corollary~\ref{coro-action-base-on-groupoid}, we can form the inner tensor product $L^2(G_M,s,\mu)\otimes_{C_0(M)} \mathcal{E}_W$ and $L^2(G_M,r,\widetilde{\mu})\otimes_{C_0(M)} \mathcal{E}_W$. We denote by $C^\infty_c(G_M)\otimes_{\operatorname{alg}} C^\infty_c(G_M,r^\ast W)$ the dense subset of $L^2(G_M,s,\mu)\otimes_{C_0(M)} \mathcal{E}_W$ consists of linear span of elements of the form $f\otimes \varphi$ with $f\in C^\infty_c(G_M)$ and $\varphi\in C^\infty_c(G_M, r^\ast W)$. 

\begin{proposition}\label{prop-approximation}
	There is a map $U: C^\infty_c(G_M)\otimes_{\operatorname{alg}} C^\infty_c(G_M,r^\ast W) \to L^2(G_M,r,\widetilde{\mu})\otimes_{C_0(M)} \mathcal{E}_W$ given by
\begin{equation}\label{eq-def-action-U}
	U(F)(\gamma_1,\gamma_2) = \gamma_1. F(\gamma_1, \gamma_1^{-1}\circ \gamma_2),
\end{equation}
where $\gamma_1.$ is the parallel translation of $W$ along $\gamma_1$. 
\end{proposition}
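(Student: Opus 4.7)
The plan is to verify that the formula unambiguously defines a function that represents an element of the inner tensor product $L^2(G_M, r,\widetilde{\mu})\otimes_{C_0(M)}\mathcal{E}_W$, at least on elementary tensors $F = f\otimes \varphi$ with $f\in C^\infty_c(G_M)$ and $\varphi\in C^\infty_c(G_M,r^\ast W)$, and then to extend $U$ by linearity. First, $\gamma_1^{-1}\circ\gamma_2$ is defined only when $r(\gamma_1) = r(\gamma_2)$, so $U(F)$ lives naturally on the fibered product $\{(\gamma_1,\gamma_2):r(\gamma_1)=r(\gamma_2)\}$; at such a pair, $\varphi(\gamma_1^{-1}\gamma_2)$ lies in $W_{s(\gamma_1)}$, and parallel translation along $\gamma_1$ (unambiguous because $W$ is leafwise flat and $G_M$ is the monodromy groupoid, so homotopic paths give the same translation) sends it into $W_{r(\gamma_1)}=W_{r(\gamma_2)}$, the correct fiber of the pullback bundle over the fibered product. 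This is precisely where elements of $L^2(G_M, r,\widetilde{\mu})\otimes_{C_0(M)}\mathcal{E}_W$ reside: the balancing $g\cdot h\otimes\psi = g\otimes h.\psi$ with both $C_0(M)$-actions using the range map (by Corollary~\ref{coro-action-base-on-groupoid}) forces functions of two variables to be supported on $G_M\times_{r,r} G_M$.

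Next I would check that $U$ descends across the $C_0(M)$-balancing in the domain. In $L^2(G_M,s,\mu)$ the right $C_0(M)$-action uses the source map, while on $\mathcal{E}_W$ the left action uses the range map; for $h\in C_0(M)$ a direct computation gives
\[
U(f\cdot h\otimes\varphi)(\gamma_1,\gamma_2) = f(\gamma_1)\,h(s(\gamma_1))\,\gamma_1.\varphi(\gamma_1^{-1}\gamma_2),
\]
while
\[
U(f\otimes h.\varphi)(\gamma_1,\gamma_2) = f(\gamma_1)\,\gamma_1.\bigl(h(r(\gamma_1^{-1}\gamma_2))\varphi(\gamma_1^{-1}\gamma_2)\bigr) = f(\gamma_1)\,h(s(\gamma_1))\,\gamma_1.\varphi(\gamma_1^{-1}\gamma_2),
\]
using $r(\gamma_1^{-1}\gamma_2) = s(\gamma_1)$ and the fact that scalars commute with parallel transport. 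Smoothness and compact support of $U(f\otimes\varphi)$ follow from smoothness of the groupoid multiplication, of the flat connection on $W$, and of $f$ and $\varphi$, so the resulting function is a smooth compactly supported section of the appropriate pullback bundle over the fibered product, hence sits inside the Hilbert module.

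Extending by linearity completes the construction. The single nontrivial input is leafwise flatness of $W$, which is exactly what makes parallel transport along $\gamma_1$ depend only on the monodromy class of the path; without it the formula would not even assign a well-defined value. I do not anticipate any serious obstacle in this proposition itself. The harder work presumably comes afterward, when one needs to show that $U$ extends to a unitary isomorphism of the completed Hilbert modules—this will require a careful comparison of the $C^\ast(G_M, B)$-valued inner products on source and target, together with a density argument that the image of $U$ exhausts a dense subspace of $L^2(G_M, r,\widetilde{\mu})\otimes_{C_0(M)}\mathcal{E}_W$.
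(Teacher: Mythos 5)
Your observations about where $U(F)$ lives (on the fibered product $G_M\times_{r,r}G_M$, with values in the correct fibers thanks to leafwise flatness and the use of the monodromy groupoid) and your check that the formula is compatible with the $C_0(M)$-balancing are correct, but the proposal skips the actual content of the proposition. The target $L^2(G_M,r,\widetilde{\mu})\otimes_{C_0(M)}\mathcal{E}_W$ is an \emph{abstract} completion of an algebraic tensor product of Hilbert modules; it is not defined as a space of functions on the fibered product. Your key step — ``the resulting function is a smooth compactly supported section of the appropriate pullback bundle over the fibered product, hence sits inside the Hilbert module'' — is exactly the assertion that needs proof, and the preceding heuristic (``this is precisely where elements of the inner tensor product reside'') is nowhere established; indeed, identifying compactly supported sections over $G_M\times_{r,r}G_M$ with elements of the completed inner tensor product is essentially equivalent to the proposition itself. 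The formula for $U(F)$ mixes the two variables through parallel translation, so $U(F)$ is visibly \emph{not} a finite sum of elementary tensors, and one must exhibit it as a limit, in the Hilbert module norm, of elements of the algebraic tensor product.

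This is what the paper's proof supplies and what is missing from yours: since $W$ is a finitely generated projective Hilbert $B$-module bundle, $C^\infty_c(G_M,r^\ast W)$ is finitely generated projective over $C^\infty_c(G_M,B)$, so one can write $U(f\otimes\varphi)(\gamma_1,\gamma_2)=\sum_i F_i(\gamma_1,\gamma_2)\,\varphi_i(r(\gamma_2))$ with finitely many $F_i\in C^\infty_c(H,B)$, $H=\{(\gamma_1,\gamma_2):r(\gamma_1)=r(\gamma_2)\}$, and sections $\varphi_i$ of $W$; then one approximates each $F_i$ in the inductive topology by elements $F_i^k\in C^\infty_c(G_M)\otimes_{\operatorname{alg}}C^\infty_c(G_M,B)$ and verifies that $\sum_i F_i^k\cdot r^\ast\varphi_i$ is a Cauchy sequence of elementary-tensor elements converging to $U(F)$ in the module norm. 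Without this (or some equivalent density/approximation argument), the claim that $U(F)$ lies in the completed module is unsupported, so the proposal has a genuine gap at its central point.
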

\begin{proof}
	We have to show that $U(F)\in L^2(G_M,r,\widetilde{\mu})\otimes_{C_0(M)} \mathcal{E}_W$. It is enough to verify the case where $F=f\otimes \varphi$. For $\gamma_1,\gamma_2\in G_M$ with $r(\gamma_1)=r(\gamma_2)$, we have
	\[
	U(F)(\gamma_1, \gamma_2) = f(\gamma_1)\cdot \gamma_1.\varphi(\gamma_1^{-1}\circ \gamma_2).
	\]
	By using the fact that $C^\infty_c(G_M, r^\ast W)$ is a finitely generated projective module over $C^\infty_c(G_M, B)$, the above equation can be written as a finite sum
	\[
	U(F)(\gamma_1, \gamma_2) = \sum_{i} F_{i}(\gamma_1, \gamma_2) \varphi_{i}(r(\gamma_2)),
	\]
	where $F_{i}$ are compactly supported smooth functions on $$H=\{(\gamma_1,\gamma_2)\in G_M\times G_M \mid r(\gamma_1)=r(\gamma_2)\}$$
	with values in $B$ and $\varphi_i$ are smooth compactly supported sections of $W\to M$.
	Since the image of $C^\infty_c(G_M)\otimes_{\operatorname{alg}} C^\infty_c(G_M,B)\to C^\infty_c(H,B)$ is dense in the inductive topology, there is a sequence  $F^k_i\in C^\infty_c(G_M)\otimes_{\operatorname{alg}} C^\infty_c(G_M,B)$ such that $F^k_i\to F_i$ in the inductive topology of $C^\infty_c(H, B)$ for all $i$. Therefore for any $\varepsilon>0$ there is $N\in \mathbb{N}$ such that 
	\[
	\left|\left|\sum_{i} (F^k_i-F^{k^\prime}_i)\cdot r^\ast \varphi_i\right|\right|_{L^2(G_M,s,\mu)\otimes_{C_0(M)} \mathcal{E}_W} \leq \varepsilon
	\]
	whenever $k,k^\prime >N$. As a consequence,
	$$\sum_{i} F^k_i\cdot  r^\ast \varphi_i\in C^\infty_c(G_M)\otimes_{\operatorname{alg}} C^\infty_c(G_M, r^\ast W)$$ is a Cauchy sequence parametrized by $k$ and converging to $U(F)$ in the topology of $L^2(G_M,s,\mu)\otimes_{C_0(M)} \mathcal{E}_W$. 
\end{proof}

\begin{proposition}
If $F,G$ either belong to $C^\infty_c(G_M)\otimes_{\operatorname{alg}} C^\infty_c(G_M,r^\ast W)$ or the image of $C^\infty_c(G_M)\otimes_{\operatorname{alg}} C^\infty_c(G_M,r^\ast W)$ under $U$, then $\langle F, G\rangle_{L^2(G_M,r,\widetilde{\mu})\otimes_{C_0(M)} \mathcal{E}_W}\in C^\infty_c(G_M, B)$ and
\begin{equation}\label{eq-calculation-formula}
\langle F, G\rangle_{L^2(G_M,r,\widetilde{\mu})\otimes_{C_0(M)} \mathcal{E}_W} (\gamma)= \int_{s(\gamma_1)=s(\gamma),r(\gamma_2)=r(\gamma_1)} \left\langle F(\gamma_2, \gamma_1\circ \gamma^{-1}), G(\gamma_2,\gamma_1) \right\rangle d\mu(\gamma_1) d\mu(\gamma_2).
\end{equation}
\end{proposition}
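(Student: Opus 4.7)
The plan is to first verify the formula for elementary tensors in $C^\infty_c(G_M)\otimes_{\operatorname{alg}} C^\infty_c(G_M, r^\ast W)$ by unfolding definitions, and then to derive the case of $U$-images by a measure-preserving change of variables that exploits the unitarity of leafwise parallel transport in the flat bundle $W$.

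For $F = f_1\otimes \varphi_1$ and $G = f_2\otimes \varphi_2$ viewed as elements of $L^2(G_M, r, \widetilde{\mu})\otimes_{C_0(M)} \mathcal{E}_W$, the defining formula for the inner product on an internal tensor product of Hilbert modules gives
\[
\langle F, G\rangle = \langle \varphi_1, \langle f_1, f_2\rangle_r\cdot \varphi_2\rangle_{\mathcal{E}_W}.
\]
Substituting the explicit description of $\langle f_1,f_2\rangle_r \in C_0(M)$, the left $C_0(M)$-action on $\mathcal{E}_W$ from Corollary~\ref{coro-action-base-on-groupoid}, and the Hilbert module inner product \eqref{eq-action-of-function-on-section}, and then using Fubini, one obtains
\[
\langle F, G\rangle(\gamma) = \int_{s(\gamma_1)=s(\gamma)} \int_{r(\gamma_2)=r(\gamma_1)} \overline{f_1(\gamma_2)}\,f_2(\gamma_2)\,\langle \varphi_1(\gamma_1\circ\gamma^{-1}), \varphi_2(\gamma_1)\rangle\, d\mu(\gamma_2)\, d\mu(\gamma_1),
\]
which matches the statement after recognizing the integrand as $\langle F(\gamma_2, \gamma_1\circ\gamma^{-1}), G(\gamma_2, \gamma_1)\rangle$. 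Extension by bilinearity then settles the case of arbitrary elements of $C^\infty_c(G_M)\otimes_{\operatorname{alg}} C^\infty_c(G_M, r^\ast W)$.

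For $F = U(F_0)$ and $G = U(G_0)$, I would substitute the definition \eqref{eq-def-action-U} and use that the leafwise parallel transport in $W$ is unitary on fibers, so that the transports along $\gamma_2$ cancel in the pairing:
\[
\langle U(F_0)(\gamma_2, \gamma_1\circ\gamma^{-1}), U(G_0)(\gamma_2, \gamma_1)\rangle = \langle F_0(\gamma_2, \gamma_2^{-1}\circ\gamma_1\circ\gamma^{-1}), G_0(\gamma_2, \gamma_2^{-1}\circ\gamma_1)\rangle.
\]
The change of variable $\eta = \gamma_2^{-1}\circ\gamma_1$ (with $\gamma_2$ held fixed) is a bijection between $\{\gamma_1 \in G_{M,s(\gamma)} : r(\gamma_1)=r(\gamma_2)\}$ and $\{\eta \in G_{M,s(\gamma)} : r(\eta)=s(\gamma_2)\}$, and it is measure-preserving because the Haar system on $G_M$ is induced from the leafwise Riemannian metric, which is invariant under both left and right translations in each source fiber. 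After this substitution the expression reduces to the one computed by the elementary-tensor formula applied to $F_0, G_0$. The two mixed cases are handled in the same manner by performing the parallel-transport cancellation on whichever factor is a $U$-image.

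Smoothness of $\langle F, G\rangle(\gamma)$ on each Hausdorff chart of $G_M$ and compactness of its support are standard consequences of the compact supports of $F, G$ and the smoothness of the integrand. The main obstacle is the bookkeeping for the two Haar systems $\mu, \widetilde{\mu}$ and the justification that the change of variable $\gamma_1 \mapsto \gamma_2^{-1}\circ\gamma_1$ is measure-preserving; this rests on the geometric fact that left translation in the source fiber $G_{M,s(\gamma)}$, identified with the universal cover of a leaf, acts by isometries of the lifted leafwise Riemannian measure.
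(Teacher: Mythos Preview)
Your treatment of the elementary-tensor case is fine and matches the paper's ``obvious'' computation. The problem is in the $U$-image case.

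What you do there is substitute the formula for $U(F_0)$ into the \emph{right-hand side} integral, use unitarity of parallel transport, and change variables. That is a perfectly correct computation, but it is the proof of the \emph{next} proposition in the paper (that $U$ is an isometry), not of this one. It never touches the \emph{left-hand side} $\langle U(F_0), U(G_0)\rangle_{L^2(G_M,r,\widetilde{\mu})\otimes_{C_0(M)}\mathcal{E}_W}$. Remember that $U(F_0)$ is, at this stage, only known to be an element of the abstract Hilbert-module completion: by Proposition~\ref{prop-approximation} it is a limit of elementary tensors, and its inner product with anything is defined through that limit, not through the pointwise formula $U(F_0)(\gamma_1,\gamma_2)=\gamma_1.F_0(\gamma_1,\gamma_1^{-1}\circ\gamma_2)$. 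Your argument therefore shows that the RHS integral for $U(F_0),U(G_0)$ equals a certain expression in $F_0,G_0$, but gives no reason why that expression equals the abstract inner product $\langle U(F_0),U(G_0)\rangle$. If you try to close the gap by invoking that $U$ is an isometry, you are circular: in the paper's logic, the isometry of $U$ is deduced \emph{from} the present formula.

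The paper's argument for the $U$-image case is different and is exactly what is missing from yours: it uses the specific approximation built in Proposition~\ref{prop-approximation}, where $U(F_0)$ is realised as a limit $\sum_i F_i^k\cdot r^\ast\varphi_i$ of elementary tensors converging in the inductive topology of $C^\infty_c$. That mode of convergence is strong enough that both the abstract inner product (by continuity of the Hilbert-module inner product) and the RHS integral (by uniform convergence with fixed compact support) pass to the limit, and since the two agree on each approximant by the elementary case, they agree on $U(F_0)$. Your change-of-variables computation should be saved for the isometry proposition that follows.
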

\begin{proof}
	If $F,G$ both belong to $C^\infty_c(G_M)\otimes_{\operatorname{alg}} C^\infty_c(G_M,r^\ast W)$ equation \eqref{eq-calculation-formula} is obvious. If at least one of them belongs to the image of $U$, then $F,G$ can be approximated by $F_i, G_i\in  C^\infty_c(G_M)\otimes_{\operatorname{alg}} C^\infty_c(G_M,r^\ast W)$ as constructed in Proposition~\ref{prop-approximation}. Moreover, $\left \langle F_i, G_i \right \rangle\to \langle F, G\rangle$  in the inductive topology of $C^\infty_c(G_M, B)$. This completes the proof.
\end{proof}

\begin{proposition}
	$$\langle U(F), U(F) \rangle_{ L^2(G_M,r,\widetilde{\mu})\otimes_{C_0(M)} \mathcal{E}_W} = \langle F, F\rangle_{ L^2(G_M,s,\mu)\otimes_{C_0(M)} \mathcal{E}_W},$$ for all $F\in C^\infty_c(G_M)\otimes_{\operatorname{alg}} C^\infty_c(G_M,r^\ast W)$.
\end{proposition}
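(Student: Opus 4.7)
The plan is to reduce the identity to the case of elementary tensors, expand both sides via the integral formula of the preceding proposition, absorb the parallel-transport factor by its unitarity, and then convert the resulting $r$-tensor integral into the $s$-tensor one via a change of variable governed by the right-invariance of the Haar system.

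By bilinearity (and polarization of the sesquilinear inner product) it suffices to verify the identity for $F = f\otimes \varphi$ with $f \in C^\infty_c(G_M)$ and $\varphi \in C^\infty_c(G_M, r^\ast W)$, in which case
\[
U(F)(\gamma_1,\gamma_2) = f(\gamma_1)\,\gamma_1.\varphi(\gamma_1^{-1}\circ\gamma_2).
\]
Applying the preceding integral formula to $\langle U(F), U(F)\rangle_{L^2(G_M,r,\widetilde\mu)\otimes_{C_0(M)} \mathcal{E}_W}(\gamma)$ produces a double integral over the set $\{s(\gamma_1)=s(\gamma),\,r(\gamma_2)=r(\gamma_1)\}$ whose integrand is
\[
\bigl\langle \gamma_2.F(\gamma_2,\gamma_2^{-1}\gamma_1\gamma^{-1}),\,\gamma_2.F(\gamma_2,\gamma_2^{-1}\gamma_1)\bigr\rangle.
\]
Since $W$ is a Hilbert $B$-module bundle equipped with a leafwise flat unitary connection, parallel transport along $\gamma_2$ is a $B$-linear isometry between the fibres, so the outer factor $\gamma_2.$ drops out of the $B$-valued inner product.

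For the next step, for each fixed $\gamma_1$ substitute $\eta = \gamma_2^{-1}\gamma_1$, equivalently $\gamma_2 = \gamma_1\eta^{-1}$, with $\eta$ ranging over $G_{M,s(\gamma_1)} = G_{M,s(\gamma)}$. Using $\widetilde\mu^x = \iota_*\mu_x$ and the right-invariance of $\mu$ stated in Definition~\ref{def-Haar-system}, the map $\gamma_2 \mapsto \eta$ is a measure-preserving bijection $G_M^{r(\gamma_1)}\to G_{M,s(\gamma)}$, hence $d\widetilde\mu^{r(\gamma_1)}(\gamma_2) = d\mu_{s(\gamma)}(\eta)$. Together with $\gamma_2^{-1}\gamma_1 = \eta$ and $\gamma_2^{-1}\gamma_1\gamma^{-1} = \eta\gamma^{-1}$, the integral becomes
\[
\int_{G_{M,s(\gamma)}}\!\!\int_{G_{M,s(\gamma)}} \bigl\langle F(\gamma_1\eta^{-1},\eta\gamma^{-1}),\,F(\gamma_1\eta^{-1},\eta)\bigr\rangle\,d\mu(\eta)\,d\mu(\gamma_1).
\]
Now relabel $\gamma_1\leftrightarrow \eta$ (both vary over the same source fibre $G_{M,s(\gamma)}$), and then for each new $\gamma_1$ make the substitution $\gamma_2 = \eta\gamma_1^{-1}$ for the new $\eta$; a second application of the right-invariance of $\mu$ gives $d\mu_{s(\gamma)}(\eta) = d\mu_{r(\gamma_1)}(\gamma_2)$, and the integral takes the form
\[
\int_{G_{M,s(\gamma)}}\!\!\int_{G_{M,r(\gamma_1)}} \bigl\langle F(\gamma_2,\gamma_1\gamma^{-1}),\,F(\gamma_2,\gamma_1)\bigr\rangle\,d\mu(\gamma_2)\,d\mu(\gamma_1),
\]
i.e.\ a double integral over $\{s(\gamma_1)=s(\gamma),\,s(\gamma_2)=r(\gamma_1)\}$. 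This is precisely the integral formula for $\langle F,F\rangle_{L^2(G_M,s,\mu)\otimes_{C_0(M)} \mathcal{E}_W}(\gamma)$, derived in the same manner as the preceding proposition but with the constraint on the second integration now being $s(\gamma_2)=r(\gamma_1)$ rather than $r(\gamma_2)=r(\gamma_1)$, matching the $C_0(M)$-balance in the $s$-tensor product.

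The main technical point is the bookkeeping for the two Haar systems during the double change of variable, especially the conversion of $\widetilde\mu$ on a range fibre into $\mu$ on a source fibre via the inverse map, and the two separate invocations of right-invariance of $\mu$. For compactly supported smooth $F$ all the integrals are absolutely convergent and Fubini applies without further comment; the possible non-Hausdorffness of $G_M$ plays no role because each step is local in a Hausdorff chart carrying the support of $f\otimes\varphi$.
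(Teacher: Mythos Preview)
Your proof is correct and follows essentially the same route as the paper: apply the integral formula for $\langle U(F),U(F)\rangle$, drop the parallel-transport factors by unitarity, and then use right-invariance of the Haar system to identify the result with $\langle F,F\rangle$. The paper compresses your two substitutions and relabeling into a single invocation of right-invariance, and does not bother with the preliminary reduction to elementary tensors (which is harmless but unnecessary, since the integral formula and the definition of $U$ already apply to general $F$ in the algebraic span).
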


\begin{proof}
	According to \eqref{eq-calculation-formula}, we have
	\begin{align*}
	\langle U(F), U(F) \rangle(\gamma) &= \int \left\langle \gamma_2.F(\gamma_2, \gamma_2^{-1}\circ \gamma_1\circ \gamma^{-1}), \gamma_2.F(\gamma_2, \gamma_2^{-1}\circ \gamma_1)\right\rangle d\mu(\gamma_1) d\mu(\gamma_2)\\
	&=\int \left\langle F(\gamma_2, \gamma_2^{-1}\circ \gamma_1\circ \gamma^{-1}), F(\gamma_2, \gamma_2^{-1}\circ \gamma_1)\right\rangle d\mu(\gamma_1) d\mu(\gamma_2)\\
	&=\langle F, F\rangle(\gamma),
	\end{align*}
where from the first line to the second line we use the fact that parallel translation is unitary and from the second line to the third line we use the right invariance of Haar system (See Definition~\ref{def-Haar-system}, iii).
\end{proof}

Therefore the map $U$ can be extended to an isometry 
\[
U: L^2(G_M,s,\mu)\otimes_{C_0(M)} \mathcal{E}_W \to L^2(G_M,r,\widetilde{\mu})\otimes_{C_0(M)} \mathcal{E}_W.
\] 
Let $f\in C^\infty_c(G_M)$, let $T_f: \mathcal{E}_W \to L^2(G_M,s,\mu)\otimes_{C_0(M)} \mathcal{E}_W$ denote the Hilbert module map $x\mapsto f\otimes x$ and $T^\ast_f: L^2(G_M, r,\widetilde{\mu})\otimes_{C_0(M)} \mathcal{E}_W\to \mathcal{E}_W$ be the operator which sends $g\otimes x$ to $\langle f,g \rangle_r.x$. Choose $f_1,f_2\in C^\infty_c(G_M)$ such that $f=\bar{f}_1\cdot f_2$, where $\cdot$ is the point-wise multiplication. 
\begin{proposition}
	The action \eqref{eq-action-groupoid-on-hilbert-module} can be realized as $T^\ast_{f_1}U T_{f_2}$.
\end{proposition}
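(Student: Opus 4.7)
The plan is to verify the identity $T^\ast_{f_1}UT_{f_2}\varphi = f.\varphi$ by applying both sides to an arbitrary $\varphi \in C^\infty_c(G_M, r^\ast W)$, which is dense in $\mathcal{E}_W$ and on which all three operators extend boundedly. First, unpack $T_{f_2}\varphi = f_2\otimes \varphi$ as the $W$-valued section $(\gamma_1,\gamma_2)\mapsto f_2(\gamma_1)\varphi(\gamma_2)$ on the fibred product $\{(\gamma_1,\gamma_2)\in G_M\times G_M : s(\gamma_1)=r(\gamma_2)\}$, with values in $W_{r(\gamma_2)}$.

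Next, apply $U$ via \eqref{eq-def-action-U} and observe that composability of $\gamma_1^{-1}\circ \gamma_2$ forces $r(\gamma_1)=r(\gamma_2)$, giving
\[
U(T_{f_2}\varphi)(\gamma_1,\gamma_2) = f_2(\gamma_1)\cdot \gamma_1.\varphi(\gamma_1^{-1}\circ \gamma_2)
\]
as a section on $\{(\gamma_1,\gamma_2): r(\gamma_1)=r(\gamma_2)\}$. For $T^\ast_{f_1}$, I would first verify that on a general element $H$ in the dense subspace of $L^2(G_M,r,\widetilde{\mu})\otimes_{C_0(M)}\mathcal{E}_W$, viewed as a section on $\{(\eta,\gamma) : r(\eta)=r(\gamma)\}$, one has
\[
(T^\ast_{f_1}H)(\gamma)=\int_{r(\eta)=r(\gamma)} \overline{f_1(\eta)}\, H(\eta,\gamma)\,d\widetilde{\mu}(\eta).
\]
This is immediate on a pure tensor $g\otimes x$ from $T^\ast_{f_1}(g\otimes x)=\langle f_1,g\rangle_r .\,x$ together with the left $C_0(M)$-action \eqref{eq-action-left-bdd-function-on-sections}, and then extends by linearity and continuity.

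Combining these three steps yields
\[
(T^\ast_{f_1} U T_{f_2}\varphi)(\gamma)=\int_{r(\eta)=r(\gamma)} \overline{f_1(\eta)}\,f_2(\eta)\cdot\eta.\varphi(\eta^{-1}\circ\gamma)\,d\widetilde{\mu}(\eta).
\]
The final step is the change of variable $\eta=\gamma\circ \gamma_1^{-1}$: as $\gamma_1$ sweeps the source fibre $G_{M,s(\gamma)}$, the element $\eta$ sweeps the range fibre $G_M^{r(\gamma)}$, one has $\eta^{-1}\circ\gamma=\gamma_1$, and $d\widetilde{\mu}(\eta)=d\mu_{s(\gamma)}(\gamma_1)$ because $\widetilde{\mu}$ is obtained from $\mu$ by pushforward along inversion and is left-invariant (Definition~\ref{def-Haar-system}). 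With $f=\overline{f_1}\cdot f_2$, the expression becomes
\[
\int_{G_{M,s(\gamma)}} f(\gamma\circ \gamma_1^{-1})\,(\gamma\circ \gamma_1^{-1}).\varphi(\gamma_1)\,d\mu_{s(\gamma)}(\gamma_1),
\]
matching \eqref{eq-action-groupoid-on-hilbert-module} exactly. The only delicate point is this last change of variable and the interplay of right-invariant $\mu$ on source fibres with left-invariant $\widetilde{\mu}$ on range fibres under inversion; the rest is formal unpacking of the tensor products.
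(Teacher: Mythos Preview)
Your argument is correct. The paper takes a slightly different route: rather than deriving an explicit integral formula for $T^\ast_{f_1}$ and evaluating $(T^\ast_{f_1}UT_{f_2}\varphi)(\gamma)$ pointwise, it pairs against a test section $\psi\in C^\infty_c(G_M,r^\ast W)$ and shows $\langle T^\ast_{f_1}UT_{f_2}\varphi,\psi\rangle=\langle f.\varphi,\psi\rangle$ by moving $T^\ast_{f_1}$ across as $T_{f_1}$ and invoking the pre-established inner product formula \eqref{eq-calculation-formula} for $\langle U(f_2\otimes\varphi),f_1\otimes\psi\rangle$. The change of variable you perform at the end is absorbed in the paper into the use of right invariance inside that computation.

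The practical difference is this: the paper's weak formulation avoids having to check that the integral expression for $T^\ast_{f_1}$ is valid on $U(f_2\otimes\varphi)$, which is not an algebraic pure tensor but only a limit of such (Proposition~\ref{prop-approximation}); the legitimacy of \eqref{eq-calculation-formula} on images of $U$ was already secured in the preceding proposition. Your direct computation is a bit more transparent, but the ``extends by linearity and continuity'' step for $T^\ast_{f_1}H$ is exactly where this issue lives---it is harmless here because everything is compactly supported and the convergence in Proposition~\ref{prop-approximation} is strong enough, but it is worth flagging. Otherwise the two arguments are equivalent.
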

\begin{proof}
	Let $\varphi,\psi\in C^\infty_c(G_M, r^\ast W)$.
	Then
	\begin{align*}
	\langle T_{f_1}^\ast UT_{f_2}\varphi, \psi \rangle(\gamma) &= \langle U(f_2\otimes \varphi), f_1\otimes \psi \rangle(\gamma)\\
	&=\int \left\langle U(f_2\otimes \varphi)(\gamma_2,\gamma_1\circ \gamma^{-1}), f_1\otimes \psi(\gamma_2,\gamma_1)\right\rangle  d\mu(\gamma_1)d\mu(\gamma_2)\\
	&=\int \left\langle f_2(\gamma_2)\cdot \gamma_2.\varphi(\gamma_2^{-1}\circ \gamma_1\circ \gamma^{-1}), f_1(\gamma_2)\psi(\gamma_1)\right\rangle d\mu(\gamma_1)d\mu(\gamma_2)\\
	&=\int \left\langle f(\gamma_2) \gamma_2.\varphi(\gamma_2^{-1}\circ \gamma_1\circ \gamma^{-1}), \psi(\gamma_1)\right\rangle d\mu(\gamma_1)d\mu(\gamma_2) \\
	&=\langle f.\varphi, \psi\rangle(\gamma),
	\end{align*}
here from the first line to the second line we use \eqref{eq-calculation-formula}. This completes the proof.
\end{proof}

Let $E$ be a finitely generated projective Hilbert $B$-module, $E^\ast$ be the space of adjointable operators between $E$ and $B$. It has naturally a left $B$-action which is given by
$
(b.\varphi)(e) = b.\varphi(e)
$ for $b\in B, e\in E$ and $\varphi\in E^\ast$.

\begin{lemma}
	$E^\ast$ can be given a Hilbert $B$-module structure and
	$E^\ast$ and $E$ are isomorphic. The isomorphism $E\to E^\ast$ is given by sending $e\in E$ to the adjointable operator 
	\[
	E\ni e^\prime\mapsto \langle e, e^\prime\rangle_E \in B.
	\]
	Moreover, $E\otimes_B E^\ast \cong \mathcal{K}_B (E)$.
\end{lemma}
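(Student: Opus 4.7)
The plan is to exploit finite generation and projectivity throughout: pick a projection $p\in M_n(B)$ so that $E\cong pB^n$ as a right Hilbert $B$-module, with the standard inner product of $B^n$ restricted to $E$.

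First I would show that the formula $\Phi(e)=\langle e,-\rangle_E$ lands in $E^\ast=\mathcal{L}_B(E,B)$, i.e. it is adjointable: the adjoint is $b\mapsto e\cdot b$, since $\langle eb,e'\rangle_E = b^{\ast}\langle e,e'\rangle_E = \langle b,\Phi(e)(e')\rangle_B$. Injectivity of $\Phi$ is immediate from $\Phi(e)(e)=\langle e,e\rangle_E$. For surjectivity, given $\psi\in E^\ast$, extend $\psi$ to an adjointable map $B^n\to B$ by $\psi\circ p$; a right $B$-linear map $B^n\to B$ is determined by its values $c_i=\psi(\epsilon_i)\in B$ on the standard basis, and one checks it equals $\langle x,-\rangle$ with $x=(c_1^\ast,\ldots,c_n^\ast)$. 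Restricting back, $\psi=\Phi(px)$, so $\Phi$ is bijective. I then define the Hilbert $B$-module structure on $E^\ast$ by transporting along $\Phi$: set $\Phi(e)\cdot b:=\Phi(eb)$ and $\langle\Phi(e),\Phi(e')\rangle_{E^\ast}:=\langle e,e'\rangle_E$. Under this right action one has $(\Phi(e)\cdot b)(e')=b^\ast\Phi(e)(e')$, which is compatible with the given left $B$-action on $E^\ast$ in the sense of a conjugate structure; checking the Hilbert module axioms reduces to the same axioms on $E$.

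For the last assertion I would define the ``rank-one'' map
\[
\Theta:E\otimes_B E^\ast\longrightarrow\mathcal{K}_B(E),\qquad e\otimes\varphi\longmapsto\theta_{e,\varphi},
\]
where $\theta_{e,\varphi}(e'):=e\cdot\varphi(e')$. Using the identification $\Phi$, this is the familiar rank-one operator $\theta_{e,\Phi(f)}(e')=e\langle f,e'\rangle_E$, whose finite linear span is by definition dense in $\mathcal{K}_B(E)$. The key step is to exhibit a finite ``frame'' for $E$: writing the projection $p=(p_{ij})$ and $\epsilon_i:=p(0,\dots,1,\dots,0)\in E$, one obtains the resolution of the identity $\mathrm{id}_E=\sum_i\theta_{\epsilon_i,\Phi(\epsilon_i)}$; consequently any $T\in\mathcal{L}_B(E)$ satisfies $T=\sum_i\theta_{T\epsilon_i,\Phi(\epsilon_i)}$, so $\mathcal{K}_B(E)=\mathcal{L}_B(E)$ and $\Theta$ is surjective. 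Injectivity of $\Theta$ follows by computing inner products on both sides with the transported structure, or equivalently by matching the resulting isomorphisms $E\otimes_B E^\ast\cong pM_n(B)p\cong\mathcal{L}_B(pB^n)$.

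The only subtlety worth flagging is the conjugate-linear flavour of the $B$-action when transporting the Hilbert module structure onto $E^\ast$; provided one is careful to define the right action by $\Phi(e)\cdot b=\Phi(eb)$ (so that on functionals it reads $\psi\cdot b:e'\mapsto b^{\ast}\psi(e')$), the verifications are routine and the proof reduces to standard Hilbert module bookkeeping under the finite projective frame $\{\epsilon_i\}$.
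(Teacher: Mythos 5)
Your proposal is correct and follows essentially the same route as the paper: realize $E\cong pB^n$ inside a free module, identify every adjointable functional with $\langle pw,\cdot\rangle$ for some $w\in B^n$, and transport the Hilbert $B$-module structure to $E^\ast$. The only difference is cosmetic — the paper does the free case first and then complements, and it merely asserts the identification $E\otimes_B E^\ast\cong\mathcal{K}_B(E)$, whereas your resolution of the identity $\mathrm{id}_E=\sum_i\theta_{\epsilon_i,\Phi(\epsilon_i)}$ with the frame $\epsilon_i=pe_i$ supplies that step explicitly.
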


\begin{proof}
	If $E=B^n$ for some integer $n$, an adjointable map $E\to B$ is determined by the images of $(1,0,\cdots,0),(0,1,0,\cdots,0),\cdots,(0,0\cdots,0,1)$ in $B$ which we shall denote by $b_1,b_2\cdots, b_n$. In this case, $E^\ast =B^n$, and the isomorphism is given by sending $(b_1,b_2,\cdots, b_n)\in E$ to $v \mapsto \langle v, (b_1, b_2, \cdots, b_n) \rangle$. In general, $E$ is finitely generated and projective, there is an orthogonal complemented Hilbert $B$-module bundle $E^\perp$ with $E\oplus E^\perp=B^n$. An adjointable operator $E\to B$ can be complemented to an adjointable operator $B^n \to B$ and is given by taking the inner product with some element $w\in B^n$. Let $p$ be the projection from $B^n$ to $E$, then the restriction of the adjointable operator $B^n\to B$ to $E$ is given by sending $v\in E$ to $\langle v, pw\rangle$. Therefore, there is an isomorphism $E\cong E^\ast$. The space $E^\ast$ is a left $B$-module and right Hilbert $\mathcal{K}_B(E)$-module, and the inner tensor product $E\otimes_B E^\ast\cong \mathcal{K}_B(E)$.
\end{proof}

\begin{lemma}\label{lem-unital-hilbert-module-compact-adjointable}
	If $E_1$ and $E_2$ are two finitely generated projective Hilbert modules over some unital $C^\ast$-algebra, then the set of compact operators between $E_1$ and $E_2$ equals the set of adjointable operators between $E_1$ and $E_2$.
\end{lemma}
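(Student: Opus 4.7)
Every compact operator between Hilbert modules is, by definition, a norm limit of finite sums of rank-one operators $\theta_{x,y}$, and each such $\theta_{x,y}$ is adjointable with $\theta_{x,y}^\ast = \theta_{y,x}$; hence $\mathcal{K}_B(E_1,E_2) \subseteq \mathcal{L}_B(E_1, E_2)$ is automatic. The content is the reverse inclusion. The plan is to show that, under the stated hypotheses, the identity operator $\mathrm{id}_{E_1}$ is itself compact, and then to factor an arbitrary adjointable $T \colon E_1 \to E_2$ through $\mathrm{id}_{E_1}$ to deduce that $T$ is compact.

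First I would handle the free case. If $E = B^n$ with $B$ unital, let $e_1,\dots,e_n$ denote the standard basis. Then for any $v = (b_1,\dots,b_n) \in B^n$ one has $\sum_{i=1}^n \theta_{e_i,e_i}(v) = \sum_i e_i\langle e_i, v\rangle_B = \sum_i e_i b_i = v$, so $\mathrm{id}_{B^n} = \sum_{i=1}^n \theta_{e_i,e_i} \in \mathcal{K}_B(B^n)$. This step uses unitality of $B$ in an essential way, since otherwise $\langle e_i, v\rangle_B = b_i$ would still be fine but the element $e_i \in B^n$ itself requires the unit.

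Next I would pass to the finitely generated projective case. By assumption there exist Hilbert $B$-modules $E_i^\perp$ and integers $n_i$ with $E_i \oplus E_i^\perp \cong B^{n_i}$; let $p_i \in \mathcal{L}_B(B^{n_i})$ be the adjointable orthogonal projection onto $E_i$ and $\iota_i \colon E_i \hookrightarrow B^{n_i}$, $\pi_i \colon B^{n_i} \twoheadrightarrow E_i$ the inclusion and projection (so $p_i = \iota_i \pi_i$ and $\pi_i \iota_i = \mathrm{id}_{E_i}$). The key observation is the standard fact that $\mathcal{K}_B$ is a two-sided ideal inside $\mathcal{L}_B$: composing any compact operator on either side with an adjointable operator yields a compact operator, because $S \theta_{x,y} = \theta_{Sx,y}$ and $\theta_{x,y} T = \theta_{x, T^\ast y}$. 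Combining this with the previous step gives $\mathrm{id}_{E_1} = \pi_1 \circ \mathrm{id}_{B^{n_1}} \circ \iota_1 \in \mathcal{K}_B(E_1)$.

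Finally, given any $T \in \mathcal{L}_B(E_1, E_2)$, write $T = T \circ \mathrm{id}_{E_1}$. Since $\mathrm{id}_{E_1}$ is compact and compacts form an ideal, $T \in \mathcal{K}_B(E_1, E_2)$, which completes the proof. No serious obstacle is anticipated; the only subtlety to watch is the unitality assumption in the free-module step, which is exactly the hypothesis of the lemma.
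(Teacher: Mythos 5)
Your proof is correct and follows essentially the same route as the paper: both embed $E_i$ as an orthogonally complemented submodule of a free module $B^{n_i}$ and exploit the unitality of $B$ to obtain compactness at the free-module level. The only cosmetic difference is that you conclude by showing $\mathrm{id}_{E_1}$ is compact and invoking the ideal property of compacts, whereas the paper reads off the relevant corner of the $2\times 2$ matrix decompositions of $\mathcal{K}_B(B^{n},B^{n})=\mathcal{L}_B(B^{n},B^{n})$.
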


\begin{proof}
	Let $E_1, E_2$ be finitely generated projective Hilbert modules over unital $C^\ast$-algebra $B$. Then there are complemented Hilbert modules $E^\perp_1, E_2^\perp$ with $E_i\oplus E^\perp_i=B^n$ for some $n\in \mathbb{N}$ and $i=1,2$. Then
	\begin{equation}\label{eq-matrix-of-compact-operators}
	\mathcal{K}_B(E_1\oplus E_1^\perp, E_2\oplus E^\perp_2)=
	\begin{bmatrix}
		\mathcal{K}_B(E_1,E_2) & \mathcal{K}_B(E_1,E^\perp_2)\\
		\mathcal{K}_B(E_1^\perp, E_2) & \mathcal{K}_B(E_1^\perp,E_2^\perp)
	\end{bmatrix}.
	\end{equation}
	On the other hand, since $B$ is unital, $\mathcal{K}_B(E_1\oplus E_1^\perp, E_2\oplus E^\perp_2)=	\mathcal{L}_B(E_1\oplus E_1^\perp, E_2\oplus E^\perp_2)$ and
	\begin{equation}\label{eq-matrix-of-adjointable-operators}
		\mathcal{L}_B(E_1\oplus E_1^\perp, E_2\oplus E^\perp_2)=
		\begin{bmatrix}
			\mathcal{L}_B(E_1,E_2) & \mathcal{L}_B(E_1,E^\perp_2)\\
			\mathcal{L}_B(E_1^\perp, E_2) & \mathcal{L}_B(E_1^\perp,E_2^\perp)
		\end{bmatrix}.
	\end{equation}
By comparing \eqref{eq-matrix-of-compact-operators} with \eqref{eq-matrix-of-adjointable-operators}, we have  $\mathcal{K}_B(E_1,E_2)=\mathcal{L}_B(E_1,E_2)$.
\end{proof}

\begin{proposition}\label{prop-action-of-monodromy-on-hilbert-module}
	The action \eqref{eq-action-groupoid-on-hilbert-module} extends to a $\ast$-homomorphism $C^\ast G_M \to \mathcal{L}(\mathcal{E}_W)$ whose image is contained in the algebra of compact operators on $\mathcal{E}_W$.
\end{proposition}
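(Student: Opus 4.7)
Proof plan. The plan is to derive both the boundedness of the $*$-representation and the compactness of its image from the factorization $f.\varphi = T^\ast_{f_1} U T_{f_2}\varphi$ established in the preceding proposition, where $f = \bar f_1 \cdot f_2$.

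For the extension to $C^\ast G_M$, I would first use the factorization to observe directly that $f.$ is adjointable on $\mathcal{E}_W$: the maps $T_{f_2}$ and $T^\ast_{f_1}$ are adjointable Hilbert module maps, and $U$ is an isometry. It then remains to check that $f \mapsto f.$ is a bounded $\ast$-representation in the sense required by the definition of $C^\ast G_M$. Multiplicativity $(f \ast g). = f.\circ g.$ reduces to associativity of convolution on $G_M$ combined with the functoriality of leafwise parallel transport under concatenation of paths; the leafwise flatness of $W$ is essential here to make parallel transport depend only on homotopy class in the leaf. The $\ast$-preserving property follows from the right invariance of the Haar system together with the unitarity of the parallel transport. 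For the $I$-norm bound, I would factor $f = \bar f_1 f_2$ smoothly with $|f_1|^2, |f_2|^2 \leq C\cdot|f|$, estimate $\|T_{f_i}\|$ fiberwise by Cauchy--Schwarz in terms of $\|f_i\|_I$, and combine to get $\|f.\|_{\mathcal{L}(\mathcal{E}_W)} \leq C\cdot\|f\|_I$. The universal property of $C^\ast G_M$ then produces the desired $\ast$-homomorphism $C^\ast G_M \to \mathcal{L}(\mathcal{E}_W)$.

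For compactness, since $\mathcal{K}(\mathcal{E}_W)$ is a closed ideal of $\mathcal{L}(\mathcal{E}_W)$ and $C^\infty_c(G_M)$ is dense in $C^\ast G_M$, it suffices to show that $f.$ belongs to $\mathcal{K}(\mathcal{E}_W)$ for each $f \in C^\infty_c(G_M)$. Unwinding \eqref{eq-action-groupoid-on-hilbert-module}, the operator $f.$ is the integral operator with smooth compactly supported kernel $K_f(\gamma, \gamma') = f(\gamma\circ \gamma'^{-1})\cdot P_{\gamma\circ \gamma'^{-1}}$, where $P$ denotes the leafwise parallel transport of $W$. I would cover the support of $f$ in $G_M$ by finitely many open sets on which $W$ is trivialized and on which the range and source maps are diffeomorphisms onto their images, take a subordinate partition of unity, and use each local trivialization together with Lemma~\ref{lem-unital-hilbert-module-compact-adjointable} (which writes $\operatorname{id}_{W_x}$ as a finite sum of rank-one operators, using that $W_x$ is a finitely generated projective Hilbert module over the unital $C^\ast$-algebra $B$) to decompose the restriction of $K_f$ as a finite linear combination of rank-one Hilbert module kernels. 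Each summand is by construction one of the rank-one Hilbert module operators $\theta_{\xi_i,\eta_i}(\varphi) = \xi_i\cdot\langle \eta_i,\varphi\rangle$ with $\xi_i, \eta_i \in C^\infty_c(G_M, r^\ast W)$, hence lies in $\mathcal{K}(\mathcal{E}_W)$, and summing these finitely many terms gives $f. \in \mathcal{K}(\mathcal{E}_W)$.

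The main obstacle is the last step: justifying the local decomposition of the parallel transport kernel into rank-one Hilbert module pieces. The subtlety is that $P_\gamma$ is an operator between fibers $W_{s(\gamma)}$ and $W_{r(\gamma)}$ at different base points, so the rank-one decomposition must live on $G_M$ and must be expressed through sections of $r^\ast W$ rather than of $W$ itself. This is precisely the reason for working with $\mathcal{E}_W$ as the completion of $C^\infty_c(G_M, r^\ast W)$. Once this translation is made, the smoothness of parallel transport along bisections (which is guaranteed by leafwise flatness of $W$) ensures that the building blocks $\xi_i, \eta_i$ obtained from the partition of unity are smooth and compactly supported, completing the argument.
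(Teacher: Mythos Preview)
Your extension argument is essentially the paper's: factor $f=\bar f_1 f_2$, bound $\|T_{f_i}\|$ in terms of $\|f\|_I$, and invoke the universal property. The paper makes the specific choice $f_1=|f|^{1/2}$, $f_2=f/|f|^{1/2}$ to get the constant $C=1$, but your version with a constant is fine.

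The compactness argument has a genuine gap. You write the kernel $K_f(\gamma,\gamma')=f(\gamma\circ\gamma'^{-1})P_{\gamma\circ\gamma'^{-1}}$ and, after localizing and using Lemma~\ref{lem-unital-hilbert-module-compact-adjointable} to express $P_\gamma$ as a finite sum of \emph{fiberwise} rank-one operators in $\mathcal{K}_B(W_{s(\gamma)},W_{r(\gamma)})$, you assert that each resulting summand is a rank-one operator $\theta_{\xi_i,\eta_i}$ on the Hilbert $C^\ast(G_M,B)$-module $\mathcal{E}_W$. That step does not follow. A rank-one operator $\theta_{\xi,\eta}$ on $\mathcal{E}_W$ acts by $\varphi\mapsto \xi\cdot\langle\eta,\varphi\rangle$, where both the module action and the inner product are \emph{convolutions} over $G_M$; unwinding this, the kernel of $\theta_{\xi,\eta}$ is itself a convolution $\int \xi(\gamma\circ\gamma_1^{-1})\otimes\eta(\gamma_2\circ\gamma_1^{-1})^\ast\,d\mu(\gamma_1)$, not a pointwise product. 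So a kernel that factors fiberwise as $r^\ast\varphi\cdot g\cdot s^\ast\psi^\ast$ with $g\in C^\infty_c(G_M,B)$ is \emph{not} automatically of the form $\theta_{\xi,\eta}$.

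The paper closes exactly this gap: after reducing to basic tensors $r^\ast\varphi\otimes f\otimes s^\ast\psi^\ast$ (your localization accomplishes the same reduction), it shows by direct computation that such a tensor equals $\theta_{\varphi_1,\psi_1}$ \emph{provided} $f$ admits a convolution factorization $f=f_1\ast f_2$ in $C^\infty_c(G_M,B)$, with $\varphi_1(\gamma)=\varphi(r(\gamma))f_1(\gamma)$ and $\psi_1(\gamma^{-1})=\psi(s(\gamma))f_2(\gamma)$. For general $f$ one must then approximate by such products using an approximate identity of $C^\ast(G_M,B)$, together with the estimate $\|r^\ast\varphi\otimes f\otimes s^\ast\psi^\ast\|\le\|\varphi\|_\infty\|f\|_{C^\ast(G_M,B)}\|\psi\|_\infty$. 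Your proposal omits both the convolution factorization and the approximation, and without them the passage from ``fiberwise finite rank'' to ``compact on $\mathcal{E}_W$'' is unjustified.
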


\begin{proof}
	According to the above discussion, 
	\begin{equation}\label{eq-ineq-key-estimate}
		||f.\varphi|| \leq ||T_{f_1}^\ast||\cdot ||T_{f_2}|| \cdot ||\varphi||,
	\end{equation}
where we omit the norm of $U$ since it is an isometry. It is easy to check that 
\[
||T_f|| = ||f||_{L^2(G_M, s, \mu)} = \sup_{x\in G^{(0)}_M} \left|\int_{s(\gamma)=x} |f(\gamma)|^2 d\mu(\gamma)\right|^{1/2}
\] 
and 
\[
||T^\ast_f|| = ||f||_{L^2(G_M, r, \widetilde{\mu})}=\sup_{x\in G^{(0)}_M} \left|\int_{r(\gamma)=x} |f(\gamma)|^2 d\widetilde\mu(\gamma)\right|^{1/2}.
\]
Let $f_1(\gamma)=|f(\gamma)|^{1/2}$, $f_2(\gamma)=f(\gamma)/f_1(\gamma)$ if $f(\gamma)\neq 0$ and $f_2(\gamma)=0$ if $f(\gamma)=0$. In this way, we have $f=f_1\cdot f_2$ and $|f_1|^2=|f_2|^2=|f|$.
According to the definition $||f||_I = \max\{||T_{f_2}||^2, ||T^\ast_{f_1}||^2\}$. Therefore, the inequality \eqref{eq-ineq-key-estimate} becomes
\begin{equation*}
	||f.\varphi|| \leq ||f||_I\cdot ||\varphi||
\end{equation*}
which completes the extension part of the proof.

Since $B$ is unital, according to Lemma~\ref{lem-unital-hilbert-module-compact-adjointable} the parallel translation along a curve $\gamma\in G_M$ is an element of the space of compact operators $\mathcal{K}(W_{s(\gamma)}, W_{r(\gamma)})$. Therefore the action of $C^\infty_c(G_M)$ is given by convolution multiplication with an element in $C^\infty_c(G_M, r^\ast W \otimes_B s^\ast W^\ast)$. 

It suffices to show that the operator given by convolution multiplication with element in $C^\infty_c(G_M, r^\ast W \otimes_B s^\ast W^\ast)$ is a compact operator. Let $\varphi_0, \psi_0$ be sections of $W\to M$, $f\in C^\infty_c(G_M,B)$, and denote by $\psi^\ast_0$ the section of $W^\ast \to M$ which is given by $\psi^\ast_0(\varphi_0)=\langle \psi_0, \varphi_0\rangle_W$. Since $W$ is a finitely generated projective Hilbert module bundle over $M$, $C^\infty_c(M, W)$ is a finitely generated projective module over $C^\infty_c(M,B)$. Hence, $C^\infty_c(G_M, r^\ast W)$, $C^\infty_c(G_M, s^\ast W^\ast)$ are finitely generated projective modules over $C^\infty_c(G_M,B)$. More precisely, let $\{\varphi_i\}$ be a finite sequence of smooth sections of $W\to M$
such that the span of $\{\varphi_i(m)\}$ is $W_m$ for all $m\in M$. Then $C^\infty_c(G_M, r^\ast W)$ can be obtained as span of $f_i\cdot r^\ast \varphi_i$ where $f_i\in C^\infty_c(G_M,B)$. Similar result holds for $C^\infty_c(G_M, s^\ast W^\ast)$.

Accordingly, $\{\varphi_i(m)\otimes \varphi_j^\ast(n)\}$ span the vector space $W_m\otimes W^\ast_n$ for all $m\in M$ and $n\in M$. Therefore elements in $C^\infty_c(G_M, r^\ast W \otimes_B s^\ast W^\ast)$ can be written as span of 
	\begin{equation}\label{eq-basic-tensor}
		r^\ast\varphi\otimes f\otimes s^\ast\psi^\ast,
	\end{equation}
where $\varphi, \psi\in C^\infty_c(M,W)$, $\psi^\ast\in C^\infty_c(M, W^\ast)$ and $f\in C^\infty_c(G_M, B)$. It suffices to show the operator $T_{\varphi,f,\psi}$ which is given by convolution multiplication with elements of the form \eqref{eq-basic-tensor} is a compact operator.
	
If there are $f_1,f_2\in C^\infty_c(G_M, B)$ such that $f_1\ast f_2 = f$, we shall pick $\varphi_1\in C^\infty_c(G_M, r^\ast W)$ which is given by $\varphi_1(\gamma)= \varphi(r(\gamma))f_1(\gamma)$ and $\psi_1\in C^\infty_c(G_M, r^\ast W)$ which is given by $\psi_1(\gamma^{-1}) = \psi(s(\gamma))f_2(\gamma)$. Then 
\begin{align*}
\theta_{\varphi_1,\psi_1}h (\gamma)&= \varphi_1.\langle \psi_1, h\rangle (\gamma)\\
&= \int_{\gamma_1\in G_{M,s(\gamma)}} \varphi_1(\gamma\circ \gamma_1^{-1})\langle \psi_1, h\rangle(\gamma_1) d\mu_{s(\gamma)}(\gamma_1)\\
&= \int_{\gamma_1\in G_{M,s(\gamma)}} \varphi_1(\gamma\circ \gamma_1^{-1})  d\mu_{s(\gamma)}(\gamma_1) \int_{\gamma_2\in G_{M,s(\gamma_1)}}\left\langle \psi_1(\gamma_2\circ \gamma_1^{-1}), h(\gamma_2)\right\rangle d\mu_{s(\gamma_1)}(\gamma_2) \\
&= \int \varphi(r(\gamma))f_1(\gamma\circ \gamma_1^{-1}) \left\langle \psi(r(\gamma_2))f_2(\gamma_1\circ \gamma_2^{-1}), h(\gamma_2) \right\rangle d\mu_{s(\gamma)}(\gamma_1)d\mu_{s(\gamma_1)}(\gamma_2)\\
&= \int \varphi(r(\gamma))f(\gamma\circ \gamma_2^{-1})\psi^\ast(r(\gamma_2)) h(\gamma_2) d\mu_{s(\gamma)}(\gamma_2)\\
&= \int_{\gamma_2\in G_{M, s(\gamma)}} \left( r^\ast\varphi\otimes f\otimes s^\ast\psi^\ast \right)(\gamma\circ \gamma_2^{-1}) h(\gamma_2) d\mu_{s(\gamma)}(\gamma_2),
\end{align*}
where from the first line to the second line we use the equation
\eqref{eq-action-compactly-pre-hilbert}, from the second line to the third line we use the equation \eqref{eq-action-of-function-on-section}, from the third line to the fourth line we plug-in the definition of $\varphi_1$ and $\psi_1$ and from the fourth line to the fifth line we use the assumption that $f=f_1\ast f_2$. Therefore, the operator $T_{\varphi, f, \psi}$ which is the convolution with the element of the form \eqref{eq-basic-tensor} is a compact operator.

In general, since $C^\ast$-algebras have approximate identity and $C^\infty_c(G_M, B)$ is dense in $C^\ast(G_M, B)$, any $f\in C^\infty_c(G_M,B)$ can be approximated by elements of the form $f_1\ast f_2$ in norm $||\cdot||_{C^\ast (G_M,B)}$. It can be checked that the operator norm of $r^\ast \varphi\otimes f\otimes s^\ast\varphi^\ast$ is less than or equal to \[
||\varphi||_\infty\cdot ||f||_{C^\ast(G_M,B)}\cdot ||\psi||_\infty,
\]
where $||\varphi||_\infty=\sup_{m\in M} ||\varphi(m)||_{W}$ and $||\psi||=\sup_{m\in M} ||\psi(m)||_W$. Then for any $\varepsilon> 0$ there is $f_1,f_2\in C^\infty_c(G_M, B)$ such that $||f-f_1\ast f_2||_{C^\ast(G_M, B)}<\varepsilon/\left(||\varphi||_\infty\cdot ||\psi||_\infty\right)$, and there is a compact operator $\theta_\varepsilon$ such that $||\theta_\varepsilon-T_{\varphi, f,\psi}||\leq \varepsilon$. 
This completes the proof.
	\end{proof}

\begin{corollary}\label{coro-key-kk-theory}
	The Hilbert module $\mathcal{E}_W$ together with the zero operator $\left( \mathcal{E}_W, 0\right)$ form a Kasparov module which defines an element in $KK(C^\ast G_M, C^\ast (G_M, B))$.
	\qed
\end{corollary}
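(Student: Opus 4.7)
The plan is to verify the Kasparov module axioms directly, as they essentially reduce to results already established. Recall that an unbounded operator $F$ on a Hilbert $B'$-module $\mathcal{E}$ with a $\ast$-homomorphism $\phi \colon A \to \mathcal{L}(\mathcal{E})$ defines a Kasparov $(A, B')$-module provided that $[F,\phi(a)]$, $(F^\ast - F)\phi(a)$, and $(F^2 - 1)\phi(a)$ all lie in $\mathcal{K}(\mathcal{E})$ for every $a \in A$. In our setting, $A = C^\ast G_M$, $B' = C^\ast(G_M, B)$, and $F = 0$, so the first two conditions become the trivial statement that $0$ is compact, while the third condition becomes $-\phi(a) \in \mathcal{K}(\mathcal{E}_W)$ for all $a \in C^\ast G_M$.

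The main ingredient is then Proposition~\ref{prop-action-of-monodromy-on-hilbert-module}, which has already been established: the action \eqref{eq-action-groupoid-on-hilbert-module} extends to a $\ast$-homomorphism $\phi \colon C^\ast G_M \to \mathcal{L}(\mathcal{E}_W)$ whose image is contained in $\mathcal{K}(\mathcal{E}_W)$. The first part of that proposition (boundedness via the factorization $f \mapsto T^\ast_{f_1} U T_{f_2}$ together with the inequality $||f||_I = \max\{||T_{f_2}||^2, ||T^\ast_{f_1}||^2\}$) gives the extension to a $\ast$-homomorphism of $C^\ast$-algebras, and the second part (realizing the operator as convolution with elements of $C^\infty_c(G_M, r^\ast W \otimes_B s^\ast W^\ast)$ and approximating $f$ by products $f_1 \ast f_2$) shows that the image consists of compact operators.

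Thus the plan reduces to one line: invoke Proposition~\ref{prop-action-of-monodromy-on-hilbert-module} to obtain $\phi(a) \in \mathcal{K}(\mathcal{E}_W)$, observe that with $F = 0$ the remaining two compactness conditions are automatic, and conclude that $(\mathcal{E}_W, 0)$ is a Kasparov $(C^\ast G_M, C^\ast(G_M, B))$-module, whose class is the desired element in $KK(C^\ast G_M, C^\ast(G_M, B))$. There is no genuine obstacle here; the real work has been carried out in the preceding sections, particularly in the estimates that placed $\phi(C^\ast G_M)$ inside $\mathcal{K}(\mathcal{E}_W)$, which in turn rested on the leafwise flatness of $W$ (used to define the parallel-translation action) and on Lemma~\ref{lem-unital-hilbert-module-compact-adjointable} (used to identify parallel transport with a compact operator between fibers, which requires that $B$ be unital).
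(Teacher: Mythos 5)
Your proposal is correct and is exactly the argument the paper intends: the corollary is stated with an immediate \qed because, with $F=0$, the only nontrivial Kasparov condition is that $\phi(a)\in\mathcal{K}(\mathcal{E}_W)$ for all $a\in C^\ast G_M$, which is precisely the content of Proposition~\ref{prop-action-of-monodromy-on-hilbert-module}. Your identification of where the real work lies (the compactness of the image, resting on leafwise flatness and Lemma~\ref{lem-unital-hilbert-module-compact-adjointable}) matches the paper's structure.
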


\section{Enlargeable foliation}\label{sec-enlargeable-foliation}

\begin{definition}\label{def-enlargeable-foliation}
	A foliation $(M,F)$ is compactly enlargeable if there is $C>0$ such that for any $\varepsilon> 0 $, there is a compact covering $\widetilde{M}$ of $M$ and a smooth map
	\[
	f: \widetilde{M} \to S^n
	\]
	with
	\begin{itemize}
		\item 
		$|f_\ast X| \leq \varepsilon |X|$ for all $X\in C^\infty(\widetilde{M},\widetilde{F})$, where $\widetilde{F}$ is the lifting of $F$ to $\widetilde{M}$;
		\item
		$|f_\ast X| \leq C\cdot |X|$ for all $X\in C^\infty(\widetilde{M}, T\widetilde{M})$;
		\item
		$f$ has nonzero degree.
	\end{itemize}
\end{definition}

\begin{remark} 
	Notice that our notion of enlargeable foliation is in between that of \cite{Zhang20} and \cite{BenameurHeitsch19}, where \cite{Zhang20} only requires enlargeability in the leaf direction and \cite{BenameurHeitsch19} requires enlargeability in all directions.
\end{remark}

Pick a complex vector bundle $E_0$ over sphere $S^n$ such that all its Chern classes vanish except the top-degree one $c_n(E_0)\neq 0$. Let $\widetilde{M}_i$ be the compact cover with covering group $G$ and the constant $\varepsilon=1/i$, the pull back bundle $f^\ast E_0$ can be extended to a $G$-equivariant bundle 
\[
\bigoplus_{g\in G} g^\ast (f^\ast E_0) \to \widetilde{M}_i
\]
which can be reduced to a vector bundle $E_i$ over $M$. As a result all Chern classes of $E_i$ vanish except the top degree one $c_n(E_i)\neq 0$. We shall denote by $P_i \to M$ the frame bundle of $E_i$ which are by themself principal $U(d_i)$ bundles. They are equipped with natural connections whose leafwise curvatures tend to zero as $i\to \infty$.

In the following discussion, we shall make use of several $C^\ast$-algebras $A, A^\prime, Q$ and their variations. The definitions are given in Definition~\ref{def-algebra-A-Q} and Definition~\ref{def-algebras-qaq}.
Let $q_i$ denote the image of $1\in U(d_i)$ on $\mathcal{K}$. We shall consider the family of Hilbert $q_i\mathcal{K}q_i\cong M_{d_i}(\mathbb{C})$-module bundles
\begin{equation}\label{eq-def-of-V-i}
V_i=P_i \times_{U(d_i)} q_i\mathcal{K}q_i,
\end{equation}
where $U(d_i)$ acts on $\mathcal{K}$ by matrix multiplications. 
We shall briefly explain how they can be assembled into a leafwise flat Hilbert $qQq$-module (see \cite[Sec~2]{HankeSchick06} for a detailed construction). Indeed, let $\{U_\alpha\}$ be an open cover of $M$ over which each $V_i$ is trivializable and each $U_\alpha$ is homeomorphic to an unit open disc $(0,1)^n$. We can choose local trivializations 
\begin{equation}\label{eq-local-trivializations}
	\psi_{\alpha,i}: V_i|_{U_\alpha}\to U_\alpha \times q_i \mathcal{K}q_i
\end{equation}
as in \cite[Sec~2]{HankeSchick06} such that 
\begin{equation}\label{eq-choice-of-local-trivialization}
	\nabla^i_{\frac{\partial}{\partial x_k}} s = 0
\end{equation}
if $s$ is a smooth section which is constant, under the trivialization, in $[0,1]^k \times \{0\} \times \cdots \times\{0\}$ the first $k$ variable of $U_\alpha$. Here $\nabla^i$ is the connection on $V_i$.

The corresponding transition functions is denoted by
\begin{equation*}
	\varphi_{\alpha,\beta, i}: U_{\alpha}\cap U_{\beta} \to \End(q_i\mathcal{K}q_i)\cong q_i\mathcal{K}q_i.
\end{equation*}
Since the norm of curvature of $V_i$ is universally bounded with respect to $i\in \mathbb{N}$. According to \cite[Lem~2.3, Lem~2.5 and Prop~2.6]{HankeSchick06}, $\varphi_{\alpha,\beta,i}$ is a Lipschitz function with Lipschitz constant independent of $i$. Therefore, the transition functions can be assembled into 
\begin{equation}\label{eq-assembling-transition-functions}
	\varphi_{\alpha, \beta}=\left(\varphi_{\alpha, \beta,1},\varphi_{\alpha, \beta,2},\cdots,\varphi_{\alpha, \beta,i},\cdots\right),
\end{equation}
which is a Lipschitz map from $U_\alpha\cap U_\beta$ to $qAq$. It determines a Lipschitz Hilbert $qAq$-module bundle over $M$ which can be approximated by a smooth Hilbert $qAq$-module bundle $V$ over $M$. 

The properties of bundle $V$ are summarized in the following.

\begin{proposition}
	There is a Hilbert $qAq$-module bundle $V$ over $M$ such that 
	\begin{itemize}
		\item 
		$V_i$, defined in \eqref{eq-def-of-V-i}, is isomorphic to $V\cdot q_iA_iq_i$ as Hilbert $\mathcal{K}$-module bundle;
		\item
		The connection of $V$ preserves subbundle $V_i$;
		\item
		The leafwise curvature take values in $\hom(qAq,qA^\prime q)$.
	\end{itemize}
\end{proposition}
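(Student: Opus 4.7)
The plan is to follow the analogous non-foliated construction in \cite[Sec~2]{HankeSchick06}, adapting the key estimate from ``curvature tends to zero'' to ``leafwise curvature tends to zero''. Since each $\varphi_{\alpha,\beta,i}$ is Lipschitz with a constant independent of $i$, the assembled sequence $\varphi_{\alpha,\beta} = (\varphi_{\alpha,\beta,i})_{i\in\mathbb{N}}$ is a Lipschitz map $U_\alpha\cap U_\beta \to qAq$, and the cocycle identity $\varphi_{\alpha,\gamma} = \varphi_{\alpha,\beta}\cdot\varphi_{\beta,\gamma}$ holds in $qAq$ because it holds in each component. This produces a Lipschitz Hilbert $qAq$-module bundle, which I smooth to a genuine bundle $V$ by the partition-of-unity averaging argument of Hanke--Schick, keeping the smoothed data within any prescribed tolerance of the Lipschitz data.

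For property (i), the projection $p_i: qAq \to q_iA_iq_i$ sends $\varphi_{\alpha,\beta}$ to $\varphi_{\alpha,\beta,i}$, so the induced Hilbert $q_iA_iq_i$-module bundle $V\otimes_{qAq} q_iA_iq_i \cong V\cdot q_iA_iq_i$ has transition functions $\varphi_{\alpha,\beta,i}$ and is thus isomorphic to $V_i$. For property (ii), the connection 1-form $\omega_{\alpha,i}\in\Omega^1(U_\alpha)\otimes\End(q_i\mathcal{K}q_i)$ of $\nabla^i$ in the trivialization \eqref{eq-local-trivializations} is uniformly bounded in $i$, thanks to the gauge choice \eqref{eq-choice-of-local-trivialization} and the uniform curvature bound; so $\omega_\alpha = (\omega_{\alpha,i})_{i\in\mathbb{N}}$ defines an element of $\Omega^1(U_\alpha)\otimes qAq$. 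The standard transformation rule
\[
\omega_\beta = \varphi_{\alpha,\beta}^{-1}\omega_\alpha \varphi_{\alpha,\beta}+\varphi_{\alpha,\beta}^{-1}d\varphi_{\alpha,\beta}
\]
holds componentwise, so the $\omega_\alpha$ assemble into a connection $\nabla$ on $V$; by construction its $i$-th component is $\nabla^i$, so $\nabla$ preserves each subbundle $V_i = V\cdot q_iA_iq_i$.

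For property (iii), the local curvature $\Omega_\alpha = d\omega_\alpha + \omega_\alpha\wedge\omega_\alpha$ has $i$-th component equal to the curvature $\Omega_{\alpha,i}$ of $V_i$. The almost-leafwise-flatness of $\{E_i\}$ gives $\|\Omega^F_{\alpha,i}\|\to 0$ as $i\to\infty$, where $\Omega^F_{\alpha,i}$ denotes the restriction of $\Omega_{\alpha,i}$ to $F\wedge F$. Hence the sequence $(\Omega^F_{\alpha,i})_{i\in\mathbb{N}}$ lies in the norm-null ideal $qA'q\subset qAq$, so the leafwise curvature of $V$, regarded as a $qAq$-linear endomorphism of fibres, takes values in $\hom(qAq, qA'q)$, as required.

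The main technical obstacle is controlling the smoothing step, since a priori it could perturb the connection and thereby the curvature. Following Hanke--Schick, I would choose sufficiently fine partitions of unity and smoothing parameters so that the smoothed transition functions and connection 1-forms remain $C^1$-close to the original Lipschitz data; this guarantees that the leafwise components of the curvature of $V$ still converge to zero in $\mathcal{K}$ and hence genuinely lie in $qA'q$.
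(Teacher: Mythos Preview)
Your proposal is correct and follows exactly the approach indicated in the paper, which itself does not give a self-contained proof but outlines the construction (local trivializations with the gauge \eqref{eq-choice-of-local-trivialization}, uniformly Lipschitz transition functions assembled as in \eqref{eq-assembling-transition-functions}, and smoothing) and defers the details to \cite[Sec~2]{HankeSchick06}. If anything, you have written out more of the verification of properties (i)--(iii) and the smoothing caveat than the paper does.
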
 
Therefore the bundle $W=V/V\cdot qA^\prime q$ is a leafwise flat Hilbert $qQq$-module bundle which, according to Corollary~\ref{coro-key-kk-theory}, determines an element in $KK(C^\ast G_M, C^\ast (G_M, qQq))$.  The $KK$-element induces $(\phi_1)_\ast: K_0(C^\ast G_M) \to K_0(C^\ast (G_M, qQq))$. The above procedure can be replicated if we start with a sequence of trivial principal bundles $\{P^\prime_i\}$ with $P^\prime_i = M\times M_{d_i}(\mathbb{C})$. We shall get a new $KK$-theory element in $KK(C^\ast G_M, C^\ast (G_M, qQq))$ and corresponding $(\phi_2)_\ast: K_0(C^\ast G_M) \to K_0(C^\ast (G_M, qQq))$. Let
\begin{equation}\label{eq-def-phi}
\phi_\ast=(\phi_1)_\ast- (\phi_2)_\ast.
\end{equation}

Recall that the Rosenberg index $[\alpha]\in K_0(C^\ast G_M)$ is given in Definition~\ref{def-rosenberg-index}.
\begin{proposition}\label{prop-pushforward-of-dirac}
	Let $[D_W]\in K_0(C^\ast (G_M, qQq))$ denote the image of $[\alpha]\in K_0(C^\ast G_M)$ under the map $(\phi_1)_\ast: K_0(C^\ast G_M) \to K_0(C^\ast (G_M, qQq))$. 
	Then $[D_W]$ coincides with the Rosenberg index twisted by the leafwise flat Hilbert $qQq$-module bundle $W$.
\end{proposition}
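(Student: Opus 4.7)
The plan is to compute the Kasparov product $[\alpha] \otimes_{C^\ast G_M} [(\mathcal{E}_W, 0)]$ directly and match it with the Kasparov module $(\mathcal{E}_{qQq}, f(D_W))$ representing $[D_W]$. Because the right-hand operator in $(\mathcal{E}_W, 0)$ is zero, the Connes--Skandalis characterization of the Kasparov product reduces to two conditions: the underlying Hilbert module is the interior tensor product, and the operator on it is an $f(D)$-connection; no positivity condition needs to be checked.

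First I would construct a unitary isomorphism of Hilbert $C^\ast(G_M, qQq)$-modules
\begin{equation*}
\Phi: \mathcal{E} \otimes_{C^\ast G_M} \mathcal{E}_W \xrightarrow{\cong} \mathcal{E}_{qQq},
\end{equation*}
where $\mathcal{E}_{qQq}$ is the completion of $C^\infty_c(G_M, r^\ast(S \otimes W))$ appearing in the definition of $[D_W]$. On the algebraic tensor product of smooth compactly supported sections, $\Phi$ combines convolution over the source fibres with leafwise parallel transport in $W$; concretely, for $\varphi \in C^\infty_c(G_M, r^\ast S)$ and $\psi \in C^\infty_c(G_M, r^\ast W)$, one sets
\begin{equation*}
\Phi(\varphi \otimes \psi)(\gamma) = \int_{G_{M, s(\gamma)}} \varphi(\gamma \circ \gamma_1^{-1}) \otimes \bigl((\gamma \circ \gamma_1^{-1}).\psi(\gamma_1)\bigr)\, d\mu_{s(\gamma)}(\gamma_1),
\end{equation*}
where $(\gamma \circ \gamma_1^{-1}).$ denotes parallel transport along $\gamma \circ \gamma_1^{-1}$ in $W$. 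Well-definedness on the balanced tensor product amounts to verifying $\Phi(\varphi . f \otimes \psi) = \Phi(\varphi \otimes f . \psi)$ for $f \in C^\infty_c(G_M)$, which reduces via \eqref{eq-action-compactly-pre-hilbert} and \eqref{eq-action-groupoid-on-hilbert-module} to the concatenation property of parallel transport; leafwise flatness of $W$ is essential, since it is exactly what makes the transport depend only on the class of the curve in $G_M$. Preservation of $C^\ast(G_M, qQq)$-valued inner products is a direct calculation using right-invariance of the Haar system, and density then extends $\Phi$ to a Hilbert module isomorphism.

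Next I would verify that, under the identification $\Phi$, the operator $f(D_W)$ is an $f(D)$-connection in the sense of Connes--Skandalis. The condition to check is that the graded commutators $[T_\varphi, f(D_W)]$ lie in the compacts for $\varphi$ in a dense subspace of $\mathcal{E}$, where $T_\varphi : \mathcal{E}_W \to \mathcal{E} \otimes_{C^\ast G_M} \mathcal{E}_W$ is the creation operator $\psi \mapsto \varphi \otimes \psi$. Using the resolvent presentation of $f$, this reduces to the analogous statement for $(D_W \pm i)^{-1}$, which in turn follows from the leafwise Leibniz rule
\begin{equation*}
D_W \Phi(\varphi \otimes \psi) = \Phi(D\varphi \otimes \psi) + \text{(zero-order connection term from } \nabla^W\text{)},
\end{equation*}
so that $D_W$ and $D \otimes 1_W$ share a principal symbol and their difference is a bounded operator whose image under any $C_0$-function is a compact perturbation. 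By the uniqueness part of the Connes--Skandalis theorem, these two steps together give $(\phi_1)_\ast[\alpha] = [D_W]$ in $K_0(C^\ast(G_M, qQq))$.

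The main obstacle is the analytic portion of the second step: rigorously handling the bounded transforms and compactness statements in the pseudodifferential calculus over the non-commutative coefficient algebra $qQq$ on the potentially non-Hausdorff groupoid $G_M$, along the lines of Section~\ref{sec-twisted-rosenberg-index}. The first step is largely bookkeeping once leafwise flatness is invoked, though one must take care to reconcile the parallel-transport formula with the partition-of-unity convention for $C^\infty_c(G_M)$ recalled after Definition~\ref{def-Haar-system}.
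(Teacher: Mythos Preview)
Your isomorphism $\Phi$ is exactly the map $\pi$ that the paper writes down, so the first step matches. Where you diverge is in the second step, and there you both overcomplicate and slightly misstate the argument.

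First, a terminological slip: in the Connes--Skandalis picture the operator on the product module must be an $F_2$-connection, i.e.\ a connection for the operator on the \emph{second} factor. Here $F_2=0$, so the relevant condition is that $f(D_W)T_\varphi$ and $T_\varphi^\ast f(D_W)$ are compact; this is automatic because Proposition~\ref{prop-action-of-monodromy-on-hilbert-module} already shows the $C^\ast G_M$-action on $\mathcal{E}_W$ lands in the compacts, forcing each $T_\varphi$ to be compact. What you call ``an $f(D)$-connection'' is not the condition that singles out the product.

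More to the point, the paper avoids the whole connection/compact-perturbation discussion by observing that once the second operator is $0$ and the action is through compacts, the product is simply $\bigl(\mathcal{E}\otimes_{C^\ast G_M}\mathcal{E}_W,\ f(D)\otimes 1\bigr)$, and then checking that $D_W\circ\Phi=\Phi\circ(D\otimes 1)$ \emph{on the nose}. The ``zero-order connection term from $\nabla^W$'' you anticipate is identically zero: in the integrand of $\Phi(\varphi\otimes\psi)(\gamma)$ the $W$-factor is $(\gamma\circ\gamma_1^{-1}).\psi(\gamma_1)$, which by construction is parallel along $\gamma$ for the flat connection $\nabla^W$, so $1\otimes\nabla^W$ contributes nothing and only $\nabla^S\otimes 1$ survives. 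Hence $f(D)\otimes 1$ literally equals $f(D_W)$ under $\Phi$, and no bounded-perturbation or resolvent argument is needed. Your route would still reach the conclusion, but it introduces analytic work (compactness of commutators in the $qQq$-coefficient calculus) that leafwise flatness renders unnecessary.
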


\begin{proof}
	$[\alpha]$ is given by the Kasparov module $\left(\mathcal{E}, f(D)\right)$ while the $KK$-theory element is given by the Kasparov module $(\mathcal{E}_W,0)$. Their Kasparov product is given by the pair $\left( \mathcal{E}\otimes_{C^\ast G_M} \mathcal{E}_W, f(D)\otimes 1\right)$. According to the definition, the inner tensor product is completion of $\mathcal{E}\otimes_{\operatorname{alg}} \mathcal{E}_W/N$ where $N$ is the span of elements of the form 
	$$\varphi.a\otimes \psi-\varphi\otimes \Theta(a)\psi$$ 
	with $\varphi\in \mathcal{E}, a\in C^\ast G_M, \psi\in \mathcal{E}_W$ and $\Theta: C^\ast G_M \to \mathcal{L}(\mathcal{E}_W)$ being the map defined in Proposition~\ref{prop-action-of-monodromy-on-hilbert-module}. Consider the following map $\pi: C^\infty_c(G_M, r^\ast S)\otimes_{\operatorname{alg}} C^\infty_c(G_M, r^\ast W) \to C^\infty_c(G_M, r^\ast S \otimes r^\ast W)$ given by
	\begin{equation}\label{eq-iso-of-hilert-module-tensor}
	\pi(\varphi\otimes \psi)(\gamma)=  \int_{G_{M,s(\gamma)}} \varphi(\gamma\circ \gamma_1^{-1})\otimes (\gamma\circ \gamma_1^{-1}). \psi(\gamma_1) d\mu(\gamma_1),
	\end{equation}
	where $\varphi\in C^\infty_c(G_M, r^\ast S), \psi\in C^\infty_c(G_M, r^\ast W)$ and $(\gamma\circ \gamma_1^{-1}). \psi(\gamma_1)$ is the parallel translation of $\psi(\gamma_1)$ along the curve $\gamma\circ \gamma_1^{-1}$. It is a matter of direct calculation to check that $\pi$ vanishes on $C^\infty_c(G_M, r^\ast S)\otimes_{\operatorname{alg}} C^\infty_c(G_M, r^\ast W)\cap N$ and preserves the inner product if taken as map from $\mathcal{E}\otimes_{C^\ast G_M} \mathcal{E}_W$ to  the  completion of $C^\infty_c(G_M, r^\ast S \otimes r^\ast W)$.
	
	The covariant derivative on $S\otimes W$ is given by $\nabla^{S\otimes W} = \nabla^S\otimes 1+ 1\otimes \nabla^W$. We have
	\[
	\nabla^{S\otimes W}_{e_i} \pi(\varphi\otimes \psi) (\gamma) = \int_{G_{M,s(\gamma)}} \nabla_{e_i}^S\varphi(\gamma\circ \gamma_1^{-1})\otimes (\gamma\circ \gamma_1^{-1}). \psi(\gamma_1) d\mu(\gamma_1),
	\]
	where $1\otimes \nabla^W$ does not appear because $(\gamma\circ \gamma_1^{-1}).\psi(\gamma_1)$ is, by definition, parallel with respect to the curve $\gamma$ and the connection $\nabla^W$. So the operator $f(D)\otimes 1$ is precisely $f(D_W)$ under the identification \eqref{eq-iso-of-hilert-module-tensor}.
\end{proof}

	By the same reason, the image of $[\alpha]$ under the map $(\phi_2)_\ast$ is the Rosenberg index $[D_{qQq}]$ twisted by the trivial bundle $M\times qQq$. Let $\pi:  A\to Q$ be the canonical projection, it induces $\pi_\ast: K_0(C^\ast (G_M, qAq)) \to K_0(C^\ast (G_M , qQq))$. Let $[D_V], [D_{qAq}]\in K_0(C^\ast (G_M, qAq))$ be the elements defined by the leafwise Dirac-type operators twisted by the non-flat bundle $V$ and the trivial bundle $M\times qAq$ respectively. Then it is straightforward to verify that we have $\pi_\ast [D_V] = [D_W]$ and $\pi_\ast [D_{qAq}] = [D_{qQq}]$. (see also \cite[Lem~3.1]{HankeSchick06}).

Consider the following composition: 
\begin{equation}\label{eq-detecting-non-vanishing}
	K_0(C^\ast (G_M, qAq)) \to 	K_0(C^\ast G_M)   \to K_0(C^\ast_r G_H),
\end{equation}
where the first arrow is given by the homomorphism sending $A$ to its $i$-th component $\mathcal{K}$, and the second arrow is given by \eqref{eq-hom-from-monodromy-to-reduced-holonomy}.  

\begin{proposition}\label{prop-index-pushforward}
	The image of $[D_V]$ under the map \eqref{eq-detecting-non-vanishing}
	is computed by the longitudinal index element corresponds to $D_{E_i}$.
\end{proposition}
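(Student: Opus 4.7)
The strategy is to follow $[D_V]$ step by step through \eqref{eq-detecting-non-vanishing}, using naturality of the twisted leafwise Dirac Kasparov module under coefficient homomorphisms. I would factor the first arrow as
\[
K_0(C^\ast(G_M,qAq)) \xrightarrow{(p_i)_\ast} K_0(C^\ast(G_M,M_{d_i}(\mathbb{C}))) \xrightarrow{\cong} K_0(C^\ast G_M),
\]
where $p_i:qAq\to q_iAq_i\cong M_{d_i}(\mathbb{C})$ is projection onto the $i$-th factor and the second isomorphism comes from the Morita equivalence $M_{d_i}(\mathbb{C})\sim_{M}\mathbb{C}$, upgraded to the respective crossed products with $G_M$.

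At the bundle level, tensoring $V$ over $qAq$ with $M_{d_i}(\mathbb{C})$ yields exactly $V_i=P_i\times_{U(d_i)}q_i\mathcal{K}q_i$ of \eqref{eq-def-of-V-i}, by the first bullet of the proposition describing $V$. Since the connection on $V$ preserves the subbundle $V_i$ by the second bullet, the twisted Dirac $D_V$ pushes forward to the leafwise Dirac $D_{V_i}$, so $(p_i)_\ast[D_V]=[D_{V_i}]$. Under the Morita isomorphism, the Hilbert $M_{d_i}(\mathbb{C})$-module bundle $V_i=P_i\times_{U(d_i)}M_{d_i}(\mathbb{C})$ corresponds to the ordinary complex vector bundle $E_i=P_i\times_{U(d_i)}\mathbb{C}^{d_i}$ — because $M_{d_i}(\mathbb{C})\cong\mathbb{C}^{d_i}\otimes(\mathbb{C}^{d_i})^\ast$ as $U(d_i)$-equivariant right $M_{d_i}(\mathbb{C})$-modules and the Morita bimodule strips off the dual factor — and the twisted Dirac $D_{V_i}$ corresponds to $D_{E_i}$. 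Hence the image of $[D_V]$ in $K_0(C^\ast G_M)$ equals $[D_{E_i}]$, and by Remark~\ref{remark-relation-between-rosenberg-index-and-longitudinal-index} applied in the twisted setting, the subsequent map $K_0(C^\ast G_M)\to K_0(C^\ast_r G_H)$ carries it to the Connes--Skandalis longitudinal index of $D_{E_i}$.

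The main technical check is the naturality of the Dirac Kasparov cycle under this Morita equivalence. Concretely, one must confirm that the Hilbert $C^\ast(G_M,M_{d_i}(\mathbb{C}))$-module of sections of $r^\ast(S\otimes V_i)$, equipped with the bounded transform of $D_{V_i}$, becomes — after inner tensor product with the canonical Morita bimodule implementing $M_{d_i}(\mathbb{C})\sim_M\mathbb{C}$ at the level of $G_M$-crossed products — isomorphic as a Kasparov cycle to the Hilbert $C^\ast G_M$-module of sections of $r^\ast(S\otimes E_i)$ carrying the bounded transform of $D_{E_i}$. The identification $V_i\otimes_{M_{d_i}(\mathbb{C})}\mathbb{C}^{d_i}\cong E_i$ is fiberwise and intertwines connections, so the bulk of the verification is formal, but one must carry along the principal $U(d_i)$-bundle data to ensure that connections, Clifford multiplications, and the normalizing functional calculus all align under the isomorphism.
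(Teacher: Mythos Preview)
Your argument is correct and is essentially the paper's approach fleshed out: the paper's own proof is the single sentence ``It is a consequence of Remark~\ref{remark-relation-between-rosenberg-index-and-longitudinal-index}'', leaving the naturality of the twisted Dirac class under the coefficient projection $p_i$ and the Morita identification $V_i\leftrightarrow E_i$ entirely implicit. You have supplied exactly those details, and your final appeal to Remark~\ref{remark-relation-between-rosenberg-index-and-longitudinal-index} for the second arrow matches the paper verbatim.
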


\begin{proof}
	It is a consequence of Remark~\ref{remark-relation-between-rosenberg-index-and-longitudinal-index}.

\end{proof}



\begin{proposition}\label{eq-compute-k-theory-of-new-algebra}
	$K_0(C^\ast (G_M, qA^\prime q))=\bigoplus K_0(C^\ast G_M)$.
\end{proposition}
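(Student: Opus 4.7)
The strategy is to realise $qA^\prime q$ as a sequential inductive limit of finite $C^\ast$-algebra direct sums of matrix algebras, transport that presentation through the functor $C^\ast(G_M,-)$, and then apply continuity and stability of $K_0$. By Definition~\ref{def-algebra-A-Q} and Definition~\ref{def-algebras-qaq}, $qA^\prime q$ is the norm closure inside $qAq$ of $\bigoplus_i q_i\mathcal{K}q_i \cong \bigoplus_i M_{d_i}(\mathbb{C})$. Set $B_n = \bigoplus_{i=1}^n M_{d_i}(\mathbb{C})$ and $J_n = \overline{\bigoplus_{i>n} M_{d_i}(\mathbb{C})}$, both viewed as (mutually orthogonal) ideals of $qA^\prime q$, so that $qA^\prime q = B_n \oplus J_n$ as a $C^\ast$-algebra direct sum.

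First I would apply Proposition~\ref{prop-exact-sequence-of-new-C-algebra}, valid because $M = G_M^{(0)}$ is compact, to the split exact sequence $0 \to J_n \to qA^\prime q \to B_n \to 0$. Since the splitting is an honest $\ast$-homomorphism $B_n \hookrightarrow qA^\prime q$ onto an orthogonal ideal, the convolution subalgebras $C^\ast(G_M, B_n)$ and $C^\ast(G_M, J_n)$ are themselves orthogonal inside $C^\ast(G_M, qA^\prime q)$ (the pointwise product of a $B_n$-valued function with a $J_n$-valued function vanishes in the integrand of \eqref{eq-multiplication-of-groupoid-algebra}). This yields an orthogonal $C^\ast$-algebra direct sum decomposition $C^\ast(G_M, qA^\prime q) = C^\ast(G_M, B_n) \oplus C^\ast(G_M, J_n)$, and iterating on the finite factor gives $C^\ast(G_M, B_n) = \bigoplus_{i=1}^n C^\ast(G_M, M_{d_i}(\mathbb{C}))$. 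The canonical inclusions $C^\ast(G_M, B_n) \hookrightarrow C^\ast(G_M, B_{n+1}) \hookrightarrow \cdots \hookrightarrow C^\ast(G_M, qA^\prime q)$ then exhibit the target as the closure of their union: any $f \in C_c(G_M, qA^\prime q)$ has compact support $K$, the set $f(K) \subset qA^\prime q$ is totally bounded, and componentwise truncation into $B_n$ converges uniformly on $K$ as $n \to \infty$, hence in the $I$-norm and a fortiori in the $C^\ast$-norm.

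Since $K_0$ commutes with sequential inductive limits of $C^\ast$-algebras and with finite direct sums,
$$
K_0\bigl(C^\ast(G_M, qA^\prime q)\bigr) \;=\; \varinjlim_n \bigoplus_{i=1}^n K_0\bigl(C^\ast(G_M, M_{d_i}(\mathbb{C}))\bigr) \;=\; \bigoplus_{i=1}^\infty K_0\bigl(C^\ast(G_M, M_{d_i}(\mathbb{C}))\bigr).
$$
Finally, because $M_{d_i}(\mathbb{C})$ is finite dimensional (hence nuclear), $C^\ast(G_M, M_{d_i}(\mathbb{C})) \cong C^\ast G_M \otimes M_{d_i}(\mathbb{C}) \cong M_{d_i}(C^\ast G_M)$, whose $K_0$ is canonically $K_0(C^\ast G_M)$ by stability of $K$-theory. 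Substituting gives the desired identification $K_0(C^\ast(G_M, qA^\prime q)) = \bigoplus K_0(C^\ast G_M)$.

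The main technical nuisance is the density step in the second paragraph: one must justify that componentwise truncation of a compactly supported $qA^\prime q$-valued function is uniform on its support and that uniform convergence of $qA^\prime q$-valued sections is sufficient to control the $C^\ast$-norm on the crossed product. This rests on essentially the same ingredients --- compactness of $M$, pointwise monotone convergence, and Dini's theorem --- that already appear in the proof of Proposition~\ref{prop-exact-sequence-of-new-C-algebra}, so no new ideas are required.
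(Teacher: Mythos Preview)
Your argument is correct and follows essentially the same route as the paper: both realise $C^\ast(G_M, qA^\prime q)$ as the inductive limit of the finite direct sums $\bigoplus_{i=1}^n C^\ast(G_M, q_i\mathcal{K}q_i)$ via Proposition~\ref{prop-exact-sequence-of-new-C-algebra}, then invoke continuity of $K_0$ and stability. Your write-up is in fact more detailed than the paper's --- you spell out the density step (which the paper compresses into a single appeal to Dini) and make explicit the identification $C^\ast(G_M, M_{d_i}(\mathbb{C})) \cong M_{d_i}(C^\ast G_M)$ that the paper leaves implicit.
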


\begin{proof}
	By the Dini theorem, the subspace 
	$$
	\bigoplus C^\infty_c(G_M, q_i\mathcal{K}q_i)\subset C^\infty_c(G_M, qA^\prime q)
	$$ 
	is dense in the I-norm. It is clear that $\bigoplus_{i=1}^k q_i\mathcal{K}q_i$ is an ideal in $qA^\prime q$ for all $k\in \mathbb{N}$. According to Proposition~\ref{prop-exact-sequence-of-new-C-algebra}, we have the inclusion $\bigoplus_{i=1}^k C^\ast (G_M, q_i\mathcal{K}q_i) \subset C^\ast (G_M, qA^\prime q)$ for all $k\in \mathbb{N}$. Therefore, the $C^\ast$-algebra $C^\ast(G_M, qA^\prime q)$ can be realized as direct limit of $\bigoplus C^\ast (G_M, q_i \mathcal{K} q_i)$.
\end{proof}


\begin{proposition}
	Let $\phi_\ast: K_0(C^\ast G_M) \to K_0(C^\ast (G_M, qQq))$ be defined as in \eqref{eq-def-phi}. Then $\phi_\ast [\alpha] \neq 0$ in $K_0(C^\ast (G_M, qQq))$.
\end{proposition}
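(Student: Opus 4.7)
The plan is to follow the strategy sketched in the introduction and reduce $\phi_\ast[\alpha] \neq 0$ to a non-vanishing statement about longitudinal indices. By Proposition~\ref{prop-pushforward-of-dirac} applied to the leafwise flat bundles $W$ and to the trivial flat bundle $M \times qQq$, one has $\phi_\ast[\alpha] = [D_W] - [D_{qQq}]$ in $K_0(C^\ast(G_M, qQq))$. Under the surjection $\pi_\ast : K_0(C^\ast(G_M, qAq)) \to K_0(C^\ast(G_M, qQq))$ induced by $qAq \to qQq$, the classes $[D_V]$ and $[D_{qAq}]$ lift $[D_W]$ and $[D_{qQq}]$, respectively. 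The short exact sequence $0 \to qA'q \to qAq \to qQq \to 0$ of coefficient algebras, together with Proposition~\ref{prop-exact-sequence-of-new-C-algebra}, gives an exact sequence of $K$-groups. Therefore it is enough to show that $[D_V] - [D_{qAq}] \in K_0(C^\ast(G_M, qAq))$ does not lie in the image of $K_0(C^\ast(G_M, qA'q))$.

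To detect this, I would use the composition
\[
\Psi : K_0(C^\ast(G_M, qAq)) \longrightarrow \prod_i K_0(C^\ast(G_M, M_{d_i}(\mathbb{C}))) \cong \prod_i K_0(C^\ast G_M) \longrightarrow \prod_i K_0(C^\ast_r G_H),
\]
whose first factor is induced by the component projections $p_i : qAq \to q_i \mathcal{K} q_i$. By Proposition~\ref{prop-index-pushforward}, the $i$-th coordinate of $\Psi[D_V]$ is the longitudinal index $\mathrm{Ind}_{G_H}(D_{E_i})$, and the analogous computation for the trivial bundle gives $\Psi[D_{qAq}]_i = d_i \cdot \mathrm{Ind}_{G_H}(D)$. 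By Proposition~\ref{eq-compute-k-theory-of-new-algebra}, the ideal $qA'q$ is exhausted in the $I$-norm by its finite truncations $\bigoplus_{i=1}^k q_i \mathcal{K} q_i$, each of which maps under $\Psi$ into only the first $k$ factors; hence $\Psi(K_0(C^\ast(G_M, qA'q))) \subset \bigoplus_i K_0(C^\ast_r G_H)$. Thus the proposition reduces to showing that the sequence $\mathrm{Ind}_{G_H}(D_{E_i}) - d_i \cdot \mathrm{Ind}_{G_H}(D)$ is nonzero in $K_0(C^\ast_r G_H)$ for infinitely many $i$.

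The last step is the index-theoretic heart of the argument, to be carried out via the Connes–Skandalis longitudinal index theorem together with a suitable detecting pairing on $K_0(C^\ast_r G_H)$ — typically a cyclic cocycle coming from a closed Haefliger current, or, when applicable, a holonomy-invariant transverse measure paired against the leafwise $\hat A$-class — so that pairing with $\mathrm{Ind}_{G_H}(D_{E_i})$ computes a transverse integral of $\hat A(F)\,\mathrm{ch}(E_i)$. Since the enlargeability construction forces all Chern classes of $E_i$ to vanish except the top one $c_n(E_i)$, the difference of pairings reduces to a universal nonzero multiple of a transverse integral of $c_n(E_i)$, which in turn is bounded away from zero because $E_i$ is built from a map $f : \widetilde{M}_i \to S^n$ of nonzero degree. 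The main obstacle is precisely this final production of a detecting pairing that survives the transverse integration: in the Hanke–Schick manifold setting the analogous step is immediate from integer-valued Atiyah–Singer, whereas in the foliated setting one must choose the Haefliger current or transverse cocycle compatibly with the direction in which enlargeability is assumed (here, only in the leaf direction), and verify that the top leafwise Chern contribution does not degenerate under the transverse pairing. Once such a pairing is in place, infinitely many coordinate differences are nonzero, so $\Psi([D_V] - [D_{qAq}])$ lies outside $\bigoplus_i K_0(C^\ast_r G_H)$ and the proof is complete.
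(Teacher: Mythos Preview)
Your proposal follows essentially the same route as the paper: lift $\phi_\ast[\alpha]$ to $[D_V]-[D_{qAq}]$, use exactness to reduce to showing this class is not in the image of $K_0(C^\ast(G_M,qA'q))$, project to $\prod_i K_0(C^\ast_r G_H)$, and observe that the image of the ideal lands in the direct sum while the class in question has infinitely many nonzero components.

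The only difference is at the final detecting step, which you flag as the main obstacle. The paper does not go through a Haefliger current or a holonomy-invariant transverse measure; it invokes directly Connes' transverse fundamental class $\mu$ from \cite{Connes83}, which furnishes a homomorphism on $K_0(C^\ast_r G_H)$ satisfying
\[
\mu\bigl([D_{E_i-\mathbb{C}^{d_i}}]\bigr)=\bigl\langle \widehat{A}(F)\,\operatorname{ch}(E_i-\mathbb{C}^{d_i}),\,[M]\bigr\rangle.
\]
Because $\operatorname{ch}(E_i-\mathbb{C}^{d_i})$ is concentrated in top degree and $\widehat{A}(F)$ has constant term $1$, this pairing reduces to a nonzero multiple of $\langle c_n(E_i),[M]\rangle\neq 0$ for every $i$. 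So your worry about compatibility of the pairing with the leafwise-only enlargeability hypothesis is resolved by the fact that Connes' formula already pairs against the full fundamental class $[M]$; no separate transverse integration subtlety arises. Once you plug this in, your argument is the paper's argument.
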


\begin{proof}
	By Proposition~\ref{prop-pushforward-of-dirac}, the image of $[D_V]-[D_{qAq}] \in K_0(C^\ast (G_M, qAq))$ under the map 
	\[
	\pi_\ast: K_0(C^\ast (G_M , qAq)) \to K_0(C^\ast (G_M, qQq))
	\]
	is precisely $\phi_\ast [\alpha]$. By exact sequence \eqref{eq-exact-sequence-algebra}, it suffices to show that $[D_V]-[D_{qAq}]\in K_0(C^\ast (G_M, qAq))$ does not come from the image of $K_0(C^\ast (G_M, qA^\prime q))$.
	
	Consider the following commutative diagram
	\[
	\xymatrix{
		K_0(C^\ast (G_M, qA^\prime q)) \ar[r] \ar[dr]& K_0(C^\ast (G_M, qAq)) \ar[d] \\
		& \prod K_0(C^\ast G_M),
	}
	\]
	where the downward arrows are given by sending $A$ and $A^\prime$ to $A_i$'s. It then suffices to show the image of $[D_V]-[D_{qAq}]$ under the vertical downward arrow has infinitely many nonzero terms. 
	
	Indeed, according to Proposition~\ref{prop-index-pushforward}, under the map $K_0(C^\ast G_M) \to K_0(C^\ast_r G_H)$ the $i$-th component of $[D_V]-[D_{qAq}]$ is given by the longitudinal index of the Dirac type operator twisted by the virtual bundle $E_i-\mathbb{C}^{d_i}$. According to Connes\cite{Connes83}, there is a transverse fundamental class $\mu$ such that 
	\begin{equation*}
		\mu([D_{E_i-\mathbb{C}^{d_i}}]) = \langle \widehat{A}(F)\operatorname{ch}(E_i-\mathbb{C}^{d_i}), [M]\rangle,
	\end{equation*}
	where $[M]$ is a fundamental class of $M$. According to our non-vanishing assumption of top Chern classes, the sequence $\mu([D_{E_i-\mathbb{C}^{d_i}}]) $ is nonzero for all $i$. This contradicts with Proposition~\ref{eq-compute-k-theory-of-new-algebra}.

\end{proof}

The above proposition directly implies our main theorem.

\begin{theorem}\label{thm-main}
	If $(M,F)$ is a compactly enlargeable foliation in the sense of Definition~\ref{def-enlargeable-foliation} with $F$ spin and even dimensional, then $[\alpha]\neq 0$ in $K_0(C^\ast G_M)$.
\end{theorem}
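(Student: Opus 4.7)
The plan is to deduce the theorem immediately from the preceding Proposition, which establishes $\phi_\ast[\alpha] \neq 0$ in $K_0(C^\ast(G_M, qQq))$. Since $\phi_\ast: K_0(C^\ast G_M) \to K_0(C^\ast(G_M, qQq))$ is a group homomorphism, nonvanishing of $\phi_\ast[\alpha]$ forces $[\alpha] \neq 0$ in the source. So the theorem is essentially a repackaging of the preceding Proposition as a conclusion about the foliation Rosenberg index itself.

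The real substance sits in the chain of reductions already carried out in Sections~\ref{sec-rosenberg-index}--\ref{sec-enlargeable-foliation}, and in verifying them I would trace the following steps in order. First, use the compactly enlargeable data to produce a sequence $\{E_i\}$ of leafwise almost-flat vector bundles whose Chern classes vanish in all but the top degree. Second, assemble these into a Hilbert $qAq$-module bundle $V$ with uniformly Lipschitz transition functions so that the leafwise curvature takes values in $\mathrm{Hom}(qAq, qA^\prime q)$; this yields a genuinely leafwise flat Hilbert $qQq$-module bundle $W = V/V \cdot qA^\prime q$. Third, apply Corollary~\ref{coro-key-kk-theory} to obtain a $KK$-class in $KK(C^\ast G_M, C^\ast(G_M, qQq))$ implementing $(\phi_1)_\ast$, and subtract the analogous class arising from the trivial $U(d_i)$-bundles to form $\phi_\ast = (\phi_1)_\ast - (\phi_2)_\ast$. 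Fourth, use Proposition~\ref{prop-pushforward-of-dirac} to identify $\phi_\ast[\alpha]$ with the image of $[D_V] - [D_{qAq}] \in K_0(C^\ast(G_M, qAq))$ under $\pi_\ast$. Fifth, invoke the exact sequence \eqref{eq-exact-sequence-K-theory-tensor} to reduce nonvanishing of $\pi_\ast([D_V]-[D_{qAq}])$ to showing $[D_V] - [D_{qAq}]$ is not in the image of $K_0(C^\ast(G_M, qA^\prime q))$. Finally, use Proposition~\ref{prop-index-pushforward} and Connes' transverse fundamental class to detect the $i$-th component as $\langle \widehat{A}(F)\,\mathrm{ch}(E_i - \mathbb{C}^{d_i}), [M]\rangle$, which is nonzero for every $i$ by the top-degree Chern class assumption.

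The main obstacle, already dealt with in the preceding Proposition, is ensuring that the image of $K_0(C^\ast(G_M, qA^\prime q))$ inside $\prod K_0(C^\ast_r G_H)$ is actually confined to the direct sum $\bigoplus K_0(C^\ast_r G_H)$, as established by Proposition~\ref{eq-compute-k-theory-of-new-algebra}. Once this is in hand, the infinitely many nonvanishing components of $[D_V] - [D_{qAq}]$ in the product prevent it from being lifted, so $\phi_\ast[\alpha] \neq 0$ and hence $[\alpha] \neq 0$, proving the theorem.
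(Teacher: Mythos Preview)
Your proposal is correct and matches the paper's approach exactly: the paper's proof of Theorem~\ref{thm-main} is the single sentence ``The above proposition directly implies our main theorem,'' and your first paragraph captures precisely this inference from $\phi_\ast[\alpha]\neq 0$ to $[\alpha]\neq 0$. The remainder of your write-up is an accurate summary of the chain of reductions carried out in the preceding Proposition and earlier sections, so there is nothing to add or correct.
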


\section{Reduction to the even dimensional case}\label{sec-reduction}

If the foliation $F\to M$ is of odd dimensional, the exterior product of vector bundles $F\boxtimes TS^1\to M\times S^1$ defines an even dimensional foliation. Moreover, the monodromy groupoid of $(M\times S^1, F\boxtimes TS^1)$ is the direct product of monodromy groupoid of $(M,F)$ and the fundamental groupoid of $S^1$. Accordingly, the corresponding maximal groupoid $C^\ast$-algebra is $C^\ast G_M\otimes \mathcal{K}\otimes  C^\ast \mathbb{Z}$ whose $K$-theory is computed by the universal coefficient theorem:
\begin{equation*}
	K_\ast(C^\ast G_M\otimes \mathcal{K}\otimes C^\ast \mathbb{Z}) = K_\ast(C^\ast G_M)\otimes K_\ast (C^\ast \mathbb{Z}).
\end{equation*}
Here $\mathcal{K}$ is the $C^\ast$-algebra of compact operators and $C^\ast \mathbb{Z}$ is the group $C^\ast$ algebra of $\mathbb{Z}$. In particular, we have 
\begin{equation*}
	K_0(C^\ast G_M \otimes \mathcal{K} \otimes C^\ast \mathbb{Z}) = K_0(C^\ast G_M)\otimes 1 \oplus K_1(C^\ast G_M) \otimes e,
\end{equation*}
where $1$ is the generator of $K_0(C^\ast \mathbb{Z})$ and $e$ is the generator of $K_1(C^\ast \mathbb{Z})$. As in \cite{HankeSchick06}, $[\alpha(M,F)]\in K_1(C^\ast G_M)$ is defined by requiring 
\begin{equation*}
	[\alpha(M,F)]\otimes e = [\alpha(M\times S^1, F\boxtimes TS^1)] \in K_0(C^\ast G_M \otimes \mathcal{K} \otimes C^\ast \mathbb{Z}).
\end{equation*}

\begin{proposition}\label{prop-reduction}
	The foliation $(M\times S^1, F\boxtimes TS^1)$ is compactly enlargeable if $(M,F)$ is compactly enlargeable.
\end{proposition}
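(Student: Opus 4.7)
Write $n = \dim F$ and let $C$ be the constant given by the compact enlargeability of $(M,F)$. The plan is to produce, for every $\varepsilon > 0$, a compact cover of $M \times S^1$ together with a map to $S^{n+1}$ of nonzero degree by ``suspending'' the map $\widetilde{M} \to S^n$ along the $S^1$-factor. The only genuine obstacle is that $TS^1$ now lies in the new leafwise bundle $F \boxtimes TS^1$, so a straight product construction would fail to contract leafwise vectors in the $\theta$-direction; the fix is to replace $S^1$ by a sufficiently high connected cover of itself so that the $\partial_\theta$-derivative of the suspended map gets a small factor.

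Given $\varepsilon > 0$, first apply Definition~\ref{def-enlargeable-foliation} to $(M,F)$ with parameter $\varepsilon$ to obtain a compact cover $\widetilde{M} \to M$ and a smooth map $f : \widetilde{M} \to S^n \subset \mathbb{R}^{n+1}$ of nonzero degree, satisfying the leafwise $\varepsilon$-contraction and the uniform $C$-bound. Then pick an integer $k \geq 1/\varepsilon$, let $\widetilde{S}^1_k$ be the $k$-fold connected cover of $S^1$ parametrised as $\mathbb{R}/2\pi k \mathbb{Z}$ with the pulled-back metric, and form the compact cover $\widetilde{M} \times \widetilde{S}^1_k \to M \times S^1$. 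As the candidate map take
\[
\widetilde{f}(x,\theta) = \bigl(\sin(\theta/k)\,f(x),\,\cos(\theta/k)\bigr) \in S^{n+1} \subset \mathbb{R}^{n+1} \times \mathbb{R},
\]
which is well defined because $\theta/k$ is $2\pi$-periodic on $\widetilde{S}^1_k$.

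The remaining verification is three short checks. For the degree, $\widetilde{f}$ factors as $f \times (\theta \mapsto \theta/k)$ followed by the standard unreduced suspension collapse $\Sigma : S^n \times S^1 \to S^{n+1}$, $(y,\phi) \mapsto (\sin\phi\cdot y,\cos\phi)$. The reparametrisation $\theta \mapsto \theta/k$ is a diffeomorphism of circles, and $\Sigma$ factors through $S^n \wedge S^1 \simeq S^{n+1}$ and so induces an isomorphism on top cohomology; hence $\deg \widetilde{f} = \pm \deg f \neq 0$. For the differential bounds, a direct computation using $|f(x)|^2 \equiv 1$ and $\langle df(X), f(x)\rangle = 0$ yields, for any $X \in T\widetilde{M}$ and $b \in \mathbb{R}$,
\[
|d\widetilde{f}(X + b\,\partial_\theta)|^2 = \sin^2(\theta/k)\,|df(X)|^2 + b^2/k^2.
\]
This gives the uniform bound with constant $C' = \sqrt{C^2+1}$, and for $X \in \widetilde{F}$ it gives the leafwise $\varepsilon$-bound precisely because $k \geq 1/\varepsilon$ renders the $b^2/k^2$ term harmless. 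The sole conceptual step is the choice of the $k$-fold cover: without it the $b^2$ contribution from $\partial_\theta$ would survive unscaled and destroy leafwise contraction; everything else is routine.
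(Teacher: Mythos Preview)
Your overall strategy is exactly the paper's: take the product cover $\widetilde{M}\times \widetilde{S}^1_k$ and post-compose $(f,\text{cover map})$ with a collapse $S^n\times S^1\to S^{n+1}$. The differential estimate is correct. The gap is in the degree step.

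The map $\Sigma(y,\phi)=(\sin\phi\cdot y,\cos\phi)$ does \emph{not} factor through the smash product: it collapses $S^n\times\{0\}$ and $S^n\times\{\pi\}$ to the poles, but for any $y_0\in S^n$ the image $\Sigma(\{y_0\}\times S^1)$ is a full great circle, not a point. In fact $\Sigma$ is invariant under the involution $A(y,\phi)=(-y,-\phi)$, which has degree $(-1)^{n+1}\cdot(-1)=(-1)^n$ on $S^n\times S^1$; hence $\deg\Sigma=(-1)^n\deg\Sigma$, forcing $\deg\Sigma=0$ whenever $n$ is odd. (Equivalently, splitting $S^1$ into the arcs $[0,\pi]$ and $[\pi,2\pi]$ one sees that $\Sigma$ covers $S^{n+1}$ twice with local degrees $1$ and $(-1)^n$, so $\deg\Sigma=1+(-1)^n$.) Consequently, when $\dim M$ is odd your map $\widetilde f$ has degree zero and the argument collapses. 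A small side remark: $n$ here must be $\dim M$, not $\dim F$, for the notion of degree to make sense.

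The repair is immediate and is precisely what the paper does: replace your explicit $\Sigma$ by any fixed smooth degree-one map $\varphi:S^n\times S^1\to S^{n+1}$ (such maps exist, e.g.\ the smash-product quotient $S^n\times S^1\to S^n\wedge S^1\cong S^{n+1}$ smoothed), and set $C_1=\max|\varphi_\ast|$. Then $\widetilde f=\varphi\circ(f_\varepsilon,g_\varepsilon)$ has $\deg\widetilde f=\pm\deg f\neq 0$, leafwise bound $C_1\varepsilon$, and global bound $C_1C$. Your explicit Pythagorean computation of $|d\widetilde f|^2$ is lost, but it was never needed: the crude estimate $|\varphi_\ast(v,w)|\le C_1\sqrt{|v|^2+|w|^2}$ suffices.
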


\begin{proof}
	Assume that $(M,F)$ is compactly enlargeable. Then for any $\varepsilon > 0$ there is compact covering space $\widetilde{M_{\varepsilon}} \to M$ and map $f_\varepsilon:  \widetilde{M_{\varepsilon}}  \to S^n$ with the properties given in Definition~\ref{def-enlargeable-foliation}. Since $S^1$ is also enlargeable, there is $g_\varepsilon: S^1_\varepsilon\to S^1$ with the properties of Definition~\ref{def-enlargeable-foliation}. Fix a degree one map $\varphi: S^n\times S^1\to S^{n+1}$ and let $C_1=\max |\varphi_\ast|$, we claim that the following composition 
	\begin{equation*}
		 \widetilde{M_\varepsilon} \times S^1_\varepsilon \xrightarrow{(f_\varepsilon, g_\varepsilon)} S^n\times S^1\xrightarrow{\varphi} S^{n+1}
	\end{equation*}
has the wanted property. Indeed, let $\widetilde{F}$ is the lifting of $F$ to $\widetilde{M_{\varepsilon}}$ and $\Phi=\varphi\circ (f_\varepsilon, g_\varepsilon)$  then
\[
|\Phi_\ast (X,Y)|=|\varphi_\ast (f_{\varepsilon,\ast}X, g_{\varepsilon,\ast}Y)|\leq \varepsilon C_1 |(X,Y)|,
\]
for $(X,Y)\in C^\infty(\widetilde{M_\varepsilon}\times S^1_\varepsilon, \widetilde{F}\boxtimes TS^1_\varepsilon)$ and 
\[
|\Phi_\ast(Z,Y)| \leq CC_1 |(Z,Y)|,
\]
for  any tangent vector $(Z,Y)$ of $\widetilde{M_\varepsilon}\times S^1_\varepsilon$.
This completes the proof.
\end{proof}

According to Theorem~\ref{thm-main} and Proposition~\ref{prop-reduction}, $[\alpha(M\times S^1, F\boxtimes TS^1)]$ is nonzero, so $[\alpha(M,F)]$ is also nonzero.

\section*{Acknowledgements}
	The authors would like to thank Professor Weiping Zhang for kindly suggesting this problem. The authors want to thank Professor Georges Skandalis for very helpful guidance on groupoid $C^*$-algebras, in particular for pointing out Proposition~\ref{prop-canonical-map-monodromy-to-holonomy}. The authors want to thank Professor Alexander Engel for pointing out a mistake in a previous version.
	

\bibliography{Refs} 
\bibliographystyle{amsalpha}

\noindent {\small   Chern Institute of Mathematics and LPMC, Nankai University, Tianjin 300071, P. R. China. }

\smallskip

\noindent{\small Email: guangxiangsu@nankai.edu.cn}

\medskip

\noindent{\small School of Mathematical Sciences, Tongji University, Shanghai 200092, P. R. China.}

\smallskip

\noindent{\small Email: zelin@tongji.edu.cn}

\end{document}